\pgfplotsset{compat=1.13}
\newtheorem{theorem}{Theorem}[section]
\newtheorem{proposition}{Proposition}[section]
\newtheorem{corollary}{Corollary}[theorem]
\newtheorem{lemma}[theorem]{Lemma}
\newtheorem{assumption}{Assumption}
\newtheorem{remark}{Remark}
\theoremstyle{definition}
\newtheorem{note}{Note}
\newcommand{\R}{\mathbb{R}}
\newcommand{\N}{\mathbb{N}}
\newcommand{\Z}{\mathbb{Z}}
\newcommand{\C}{\mathbb{C}}
\newcommand{\Sph}{\mathbb{S}}
\newcommand{\aaa}{\tilde{a}}
\newcommand{\CC}{\mathcal{C}}
\newcommand{\opL}{\mathcal{L}}
\newcommand{\Nn}{\mathcal{N}}
\newcommand{\doub}{\mathfrak{D}}
\newcommand{\markov}{\mathfrak{M}}
\newcommand{\euc}{\mathfrak{E}}
\newcommand{\diff}{\mathrm{d}}
\newcommand{\M}{\mathbb{M}}
\newcommand{\Exp}{\mathrm{Exp}}
\newcommand{\D}{N}
\renewcommand{\d}{d}
\newcommand{\W}{\mathbf{W}}
\newcommand{\PP}{\mathbf{P}}
\newcommand{\p}{\mathbf{p}}
\newcommand{\G}{\mathbf{G}}
\newcommand{\hi}{\hat{\imath}}
\newcommand{\hj}{\hat{\jmath}}
\DeclareMathOperator{\dist}{dist}
\renewcommand{\vec}[1]{\boldsymbol{\mathbf{#1}}}
\newcommand{\Manoa}{M\=anoa}
\newcommand{\Hawaii}{Hawai\kern.05em`\kern.05em\relax i }
\newcommand{\lp}{\left(}
\newcommand{\rp}{\right)}
\newcommand{\rad}{\delta}
\newcommand{\delr}{\dfrac{|\xi-\zeta|}{\rad}}
\newcommand{\mlswght}{w}
\newcommand{\subdom}{\widehat{\Omega}}
\newcommand{\subXi}{\widehat{\Xi}}
\newcommand{\subX}{\widehat{X}}
\newcommand{\weightkernel}{\mathcal{W}}
\newcommand{\centerCard}{J}
\newcommand{\polDim}{M}
\newcommand{\matern}{s}
\DeclareMathOperator{\MLS}{MLS}
\newcommand{\I}{\mathcal{I}}
\newcommand{\J}{\mathcal{J}}
\newcommand{\revision}[1]{#1}
\newcommand{\revisioncom}[1]{}
\newcommand{\fix}[1]{#1}
\newcommand{\fixcom}[1]{}
\title{Generalized local polynomial reproductions}
\author{Thomas Hangelbroek}
\address{Department of Mathematics, University of \Hawaii– \Manoa, 2565 McCarthy Mall,
Honolulu, HI 96822, USA}
\email{hangelbr@math.hawaii.edu}
\author{Christian Rieger}
\address{Philipps-Universit\"at Marburg, Department of Mathematics and Computer Science, \linebreak
Hans--Meerwein--Stra\ss{}e 6, 35032 Marburg, 
Germany}
\email{riegerc@mathematik.uni-marburg.de}
\author{Grady B. Wright}
\address{Boise State University, 1910 University Drive, 83725, Boise, Idaho, USA}
\email{gradywright@boisestate.edu}
\date{\today}
\begin{document}

\begin{abstract}
We present a general framework, treating Lipschitz domains in Riemannian manifolds, that provides conditions guaranteeing the existence of norming sets and generalized {\em local polynomial reproductions}---a powerful tool used in the analysis of various mesh-free methods and a mesh-free method in its own right. As a key application, we prove the existence of smooth local polynomial reproductions on compact subsets of algebraic manifolds in $\R^n$ with Lipschitz boundary. These results are then applied to derive new findings on the existence, stability, regularity, locality, and approximation properties of shape functions for a {\em coordinate-free} moving least squares approximation method on algebraic manifolds, which operates directly on point clouds without requiring tangent plane approximations.

\revision{There are two appendices: the first derives high order Markov inequalities for polynomials on algebraic manifolds and the second gives instructions for calculating the dimension of the space of degree $m$ polynomials restricted to a real algebraic variety.}
\end{abstract}
\keywords{Norming sets, stable polynomial reproduction, moving least squares, bounded subsets of Riemannian manifolds, tangential Markov inequalities}
\subjclass{ 
41A17 
65D05  
65D12 
}
\maketitle
\section{Introduction}
\label{S_Introduction}

Let $\M $ be a smooth compact $d$-dimensional Riemannian manifold without boundary. 
In this  article, we 
 provide the existence of a smooth 
 generalization of the local polynomial reproduction on $\M$.
 Specifically, for certain
 bounded regions $\Omega\subset \M$,
and  function spaces $\varPi\subset C^{1}(\M)$
(which satisfy Assumptions \ref{A_function_space} and \ref{A_domain} described below)
and for sampled finite subsets $\Xi\subset \Omega$ that are sufficiently dense,
 we prove existence of a map $a:\Xi \times \Omega \to \R$  which
 satisfies the following properties
 \begin{itemize}
\item {\em $\varPi$-reproduction:} $(\forall p\in \varPi) (\forall z\in \Omega)$   $\sum_{\xi\in\Xi} a (\xi,z) p(\xi) = p(z)$ 
\item {\em stability:} $\sup_{z\in\Omega} \sum_{\xi\in \Xi} |a(\xi,z)|<2$
\item {\em locality:} $a(\xi,z)=0$ unless $\xi$ is near to $z$
\item {\em regularity:} if $\varPi\subset C^{\infty}(\M)$, then each $a(\xi,\cdot)\in C^{\infty}(\M)$.
\end{itemize}
As a consequence of this general result,
we demonstrate the existence of local polynomial reproductions
(i.e., $\varPi =\mathcal{P}_m(\R^N)$, 
polynomials of fixed degree at most $m$)
on Lipschitz domains 
in  algebraic manifolds $\M$ in $\R^{\D}$.

As in the archetypal Euclidean case, considered 
in \cite{JSW} and \cite{Wendland}, 
which forms a model for our results,
the key to the construction is to establish 
that $\Xi$ is a {\em norming set} (sometimes called an admissible mesh)
for the norm on $C(\Omega)$. 
In short, this means that the sampling operator $S:
C(\Omega)\to \ell_{\infty}(\Xi):
f\mapsto f|_{\Xi}$ is bounded below
by a fixed constant, 
independent of the cardinality
of $\Xi$. 

In the Euclidean setting, this relies heavily on Markov's inequality 
$$\|p'\|_{C([0,T])} \le \frac{2m^2}{T} \|p\|_{C([0,T])} $$ for algebraic polynomials
$p\in \mathcal{P}_m(\R)$.
For spherical harmonics on $\Sph^d$,  
Markov's inequality can be replaced by Videnskii's inequality \cite[Eqn.\ (2)]{V}.

Although  Markov inequalities exist in a variety of exotic contexts,
particularly for smooth  algebraic varieties 
in 
\cite{bos:etal:1998,bos:etal:2006}
and for  boundaries of convex sets in \cite{kroo:2002},
they are often global in nature, and unsuitable for adapting the Euclidean argument
developed in \cite{JSW} and \cite{Wendland}.
 We make a subtle but significant modification to
 the established Euclidean machinery
 to accommodate work on Riemannian manifolds.
 This involves Markov-like covariant derivative estimates
 in conjunction with a doubling estimate.
 These
 have been established for algebraic polynomials on algebraic manifolds  in \cite{bos:etal:1995} and \cite{fefferman1996local}, and
 likely exist for other systems of elementary functions on manifolds
 (see, for instance, Laplacian eigenfunctions in 
 \cite{Donnelly_Fefferman}).


 \subsection{Applications of stable polynomial reproductions}
 Local polynomial reproductions and related constructions on spheres provide a powerful tool for providing error estimates
 in scattered data approximation. 
 They have been used to obtain sampling inequalities,  first introduced by Madych and Potter \cite{MP},
 but more recently considered in  \cite{RZ,RW}.
 They are also crucial to a number of estimates in kernel and RBF approximation.
 Wu and Schaback, in \cite{WS}, use local polynomial reproductions  to estimate the {\em power function} $P_X$
 for RBF interpolation, which measures the norm of the interpolation error functional $f\mapsto f(x)-I_{X}f(x)$
 over a {reproducing} kernel Hilbert space associated with the RBF. 
DeVore and Ron, in  \cite{DevRon}, use local polynomial reproductions
 to get  kernel approximation results on $\R^d$. \revision{Videnskii's}\revisioncom{Referee 1: Minor correction 3} inequality has been
 used to provide a local spherical harmonic reproduction
 used in \cite{HesseL} and  on SO(3) in \cite{HS}.

\subsection{Moving Least Squares}
A specific motivating application for the theory presented in this article is moving least squares (MLS) approximation on algebraic manifolds.  
MLS has origins in work of Shepard
\cite{Shepard:1968} and was studied in the 1980s 
by Lancaster, Farwig, Salkauskas \cite{Lancaster:Salkauskas:1981, Farwig} and others.
For a set of points $\Xi$ in a domain $\Omega\subset \R^d$, an MLS approximant for the data  $\vec{y}=(y_{\xi})_{\xi \in \Xi}\subset \R$ takes the form   
\begin{align}\label{eq:mlsprimal}
    \MLS_{\Xi,\vec{y}}(\zeta):=p_{\zeta}^*(\zeta), \text{ where } p_{\zeta}^*:=\arg\min_{p \in \mathcal{P}_m(\R^d)} \sum_{\xi \in \Xi} \left(p(\xi) -y_{\xi}\right)^{2} \weightkernel(\xi,\zeta),
\end{align}
 $\zeta \in \Omega$ is a given  point and $\weightkernel:\Omega \times \Omega \to [0,\infty)$ is a given weight function.
Approximation theoretic results for $\Omega\subset \R^d$ were given, for example, in \cite{Levin} and \cite{Wendland}.

Our goal is to develop a MLS technique for algebraic manifolds $\Omega\subset \M$ that does not use intrinsic coordinates to $\Omega$ (i.e., coordinate-free) or tangent plane approximations. We then follow a similar approach of~\cite{Wendland}, which explicitly uses the kinds of local polynomial reproductions we develop here, to analyze the approximation properties of the method. 

MLS methods are used widely as mesh-free techniques for
solving partial differential equations (PDES)  (e.g.,\cite{Belytschko:etal:1994,Liu_MLS,Bayona,MSD,Trask}) and have been extended to spherical regions in \cite{W_sphere, HP}.
Recent works, such as \cite{Jones, SoberMLS}, have also extended MLS to problems on embedded manifolds. However, unlike
to our coordinate-free approach, these methods address the basic problem (\ref{eq:mlsprimal}) by projecting points onto tangent spaces.

\subsection{Outline}
In the next section, we 
introduce Assumptions \ref{A_function_space} and \ref{A_domain} 
which  guarantee a norming set property, among other useful properties; 
this is  a standard method for generating local polynomial reproductions,
which we consider in section \ref{S_Main} to generate a local $\varPi$-reproduction.
Unlike established constructions as in \cite{Wendland}, we 
investigate the smoothness of the basic functions, in addition to their stability and locality.
This follows the argument recently developed in the Euclidean setting in \cite{HR}.

In section \ref{S_MLS} we consider a constructive method for obtaining  local $\varPi$-reproductions based on MLS approximation. This has the advantage that derivatives of the local $\varPi$-reproduction
reproduce derivatives of functions in $\varPi$. We prove stability, smoothness and locality of this methodology.

Section \ref{S_Alg} treats the concrete application of local (restricted) polynomial reproductions on algebraic manifolds,
which employs analytic results from \cite{fefferman1996local}.
Numerical experiments for this setup are considered in section \ref{S_NE}.

\revision{Appendix \ref{S_Markov} adapts the method of Bos et al.
\cite{bos:etal:1995} to treat higher order Markov inequalities on algebraic manifolds. 
Appendix \ref{S_ideal} discusses the dimension
of $\mathcal{P}_m(\M) = \{p|_{\M} \mid p\in \mathcal{P}_m(\R^N)\}$ when $\M$ is an
algebraic variety.}\revisioncom{These sections have been added.}

\section{Geometric Background}
\subsection{Background}
\label{SS_Background} 
Throughout this paper, we assume $\M$ is a connected, $d$-dimensional Riemannian manifold.
We denote by 
{$T\M$}
the tangent bundle of $\M$, 
and by $T^{k_1}_{k_2}\M$ the vector
bundle of tensors with contravariant rank $k_1$ and covariant rank $k_2$ (in particular,
$T^{1}_{0}\M= T\M$ is the tangent bundle and $T^{0}_{1}\M= T^*\M$ is the cotangent bundle).
We will denote the fiber at $x\in \M$ by $T^{k_1}_{k_2}\M_x$.
In this article, we will be concerned primarily with covariant tensors.

The fact that $\M$ is a Riemannian manifold means that on each tangent space $T\M_x$ there
is an inner product. This extends by duality to each fiber $T^{k_1}_{k_2}\M_x$ of the tensor bundle: 
for instance,
for a covariant tensor 
$S\in T^{0}_{k}\M_x$, we have
\begin{equation}
\label{Tensor_norm}
\|S\|_{T^{0}_{k}\M_x} = \max_{(V_1,\dots,V_k)\in (T\M_x)^{k}\setminus\{0\}}
\frac{|S(V_1,\dots,V_k)|}{\|V_1\|_{T\M_x}\dots \|V_k\|_{T\M_x}}.
\end{equation}

\subsubsection{Tensor fields}
For a chart $(U,\phi)$ for $\M$ 
we get the usual vector fields $\frac{\partial}{\partial x^j}$
and forms $d x_j$ ($1\le j\le d$), which act as local bases for $T\M$ and $T^*\M$ over $U$.

These can be used to generate bases for tensor fields (i.e., sections of tensor bundles). In particular, for a 
covariant tensor field $T:\M \to T^{0}_{k}\M$, we have basis 
elements $dx_{\hi}:= dx_{i_1} \cdots  dx_{i_k}$ 
for $\hi=(i_1,\dots, i_k)\in \{1,\dots,d\}^k$, which
allow us to write $T$ in coordinates as
$T (x)= 
\sum_{\hi \in \{1,\dots,d\}^k} 
( T(x))_{\hi} dx_{\hi}$. 

Of particular interest is Riemannian metric tensor $g:\M\to T^{0}_{2}\M$, written in coordinates over $U$ 
as 
$g(x)= \sum_{i,j\le d } g_{ij} (x)dx_{i}dx_j$.
Similarly, the volume element on $\M$ is $\diff \mu= \sqrt{\det(g_{ij})}dx_1\dots dx_d$.

At a point $x\in U$,
the tangent space $T\M_x$
has  inner product $\langle T, S\rangle_{T\M_x} = \sum g_{ij} T_i S_j$,
where $T = \sum_{i=1}^d T_i \frac{\partial}{\partial x^i}$
and $S = \sum_{j=1}^d S_j \frac{\partial}{\partial x^j}$.
The   inner product 
on $T^{0}_{k}\M_x$  obtained from
(\ref{Tensor_norm}) can be expressed in coordinates as
 $$\langle  T,S\rangle_{T^{0}_{k}\M_x}
 =
 \sum_{\hi,\hj} g^{i_1j_1}(x) \cdots g^{i_kj_k}(x)  T_{\hi}S_{\hj},
$$ 
 where 
 $(g^{ij})$ is the matrix inverse of $(g_{ij})$,
 $T = \sum_{\hi} ( T)_{\hi} dx_{\hi}$. 
 and $S= \sum_{\hj} ( S)_{\hj} dx_{\hj}$
 (see  \cite[Eqn. (2.1)]{HNW}).

\subsubsection{Geodesics and exponential map}
We denote the Riemannian distance on $\M$ by 
$\dist:\M\times \M \to [0,\infty)$
or, to avoid confusion when multiple distances are in use, we use $\dist_{\M}$. 
Since $\M$ is connected, pairs of nearby points $x,y\in \M$
can be connected by a geodesic $\gamma:[a,b]\to \M$ with $\gamma(a) =x$, $\gamma(b)=y$
and $\dist(x,y) = \int_a^b \| \gamma'(t) \|_{T\M_{\gamma(t)}} \diff t$.

At every point $x\in\M$, the exponential map $\Exp_x: B(0,r_x)\to {U}\subset \M$ is a smooth diffeomorphism
defined on an open neighborhood 
of $0\in T_x\M$. It has the property that it preserves radial distances: for any $y= \Exp_x(v) \in {U}$,
$\dist(x,y)= |0-v|$.
In fact, the map $(x,v)\mapsto \Exp_x(v):\mathfrak{U}\to \M$ 
is defined and smooth on an open neighborhood  $\mathfrak{U}$ of the zero section in $T\M$.

Consequently, for any  compact set $\Omega\subset \M$, the quantity $r_{\Omega} = \min_{x\in\Omega} r_x$
is positive, and there exist constants $0<\Gamma_1\le \Gamma_2$ such that
for any $z\in \Omega$ and any $x,y\in B(0,\mathrm{r}_{\Omega})$, the metric equivalence
\begin{equation}
\label{m_e}
\Gamma_1 |x-y| \le \dist
(\Exp_{z}(x),\Exp_{z}(y)) \le \Gamma_2 |x-y|
\end{equation}
holds.

\subsubsection{Covariant differentiation}
The cotangent derivative $\nabla$ maps tensor fields of rank $(k_1,k_2)$ to tensor fields of rank $(k_1,k_2+1)$.
In particular, $\nabla^k$ maps  functions to  covariant  tensor fields of rank $(0,k)$.
We can use this to generate smoothness norms: for an open, bounded set  $U\subset \M$,
$$\|f\|_{C^k(\overline{U})}  := \max_{j\le k} \max_{x\in \overline{U}} \|\nabla^j f(x)\|_{T^{0}_{j}\M_x}.$$

If $\Omega\subset \M$  is compact, then  \cite[Lemma 3.2]{HNW} 
ensures that there are uniform constants $C_1,C_2
{>0}$  
so that the family of exponential maps
$\{\Exp_x: B_r\to \M\mid x\in\Omega\}$
provides local metric equivalences: 
for any open set $U$ with $\overline{U}\subset B(0,{r_{\Omega}})$, we have 
\begin{equation}\label{eq:expmap}
C_{1} \|u\circ\Exp_x\|_{C^k(\overline{U})}\le  
\|u\|_{C^k(\overline{\Exp_x(U)})}
\le C_{2} \|u\circ\Exp_x\|_{C^k(\overline{U})}.
\end{equation}
This is  an application of a more general result which treats metric equivalence of Sobolev norms. 
Although the full metric equivalence is not necessary for our purposes, another consequence is the following:
for every $x\in\Omega$ and any $U\subset B(x,r_{\Omega})$,
\begin{equation}\label{eq:Lp_equivalence}
C_{1} \|u\circ\Exp_x\|_{L_p(U)}\le    \| u\|_{L_p(\Exp_x(U))}\le C_{2} \|u\circ\Exp_x\|_{L_p(U)}.
\end{equation}
\label{S_Norming}
For a metric space $\Omega$
and a set $\Xi \subset \Omega$, 
define the sampling operator
$
	S_{\Xi}: 
 f \mapsto f|_{\Xi}
$
as a map from 
$C(\Omega)$ to $\ell_{\infty}(\Xi)$.
We note that
$
	\left\| S_{\Xi} \right\| 
	\le 1
$. 
A norming set 
for a subspace $\Pi\subset C(\Omega)$
is a subset $\Xi\subset \Omega$
so that $S_{\Xi}$ is bounded below.
Thus, finding a norming set $\Xi$ 
 is equivalent to developing a
 {\em Marcinkiewicz-type discretization} 
 for $p=\infty$, 
 as considered 
 in \cite[Eqn. 1.2]{dai2021entropy}.

Such norming sets have been developed for scattered approximation on spheres in \cite{JSW}. 
This has been expanded to treat subsets of $\Sph^{d}$ 
and Euclidean regions satisfying interior cone conditions
in \cite{Wendland}.
\revision{Our}\revisioncom{Referee 2: Minor Comment 1} strategy resembles the latter, although 
the use of the doubling property (\ref{F_Doubling}) simplifies the argument.
 
Our goal in the next section is to prove
a general norming set property:
for balls $B_1\subset B_2$ 
and sufficiently dense $\Xi\subset \Omega$,
the set $B_2 \cap \Xi$ is a norming set 
for $(\varPi,\|\cdot\|_{C(B_1\cap \Omega)})$.
In other words, 
when restricted to $\varPi$, the sampling operator 
$$ 
S_{\Xi}: 
(\varPi,\|\cdot\|_{C(B_1\cap \Omega)})
 \to 
\left( 
\R^{B_2\cap \Xi}, 
\| \cdot \|_{
    \ell_{\infty}( 
        B_2\cap \Xi)
    }
\right), 
\quad
p \mapsto p|_{B_2\cap\Xi}
 $$ 
 is bounded below.

\smallskip

\section{Local Markov property and preliminary results}
\label{S_LMP}
\subsection{Basic analytic and geometric assumptions}
To prove the general result, we assume the following about the function space $\varPi$.
\begin{assumption}

 \label{A_function_space}
We  assume that $\varPi\subset C^{1}(\M)$ is a finite dimensional space of functions 
which satisfy a doubling condition
and a Markov inequality. 
Namely, there exist constants 
$\doub$, $\markov$, 
and $r^{\sharp},>0$ so that for every $p\in  \varPi$, 
$0<r<r^{\sharp},$ and $x_0\in \M$,

\begin{align}
\label{F_Doubling}
\|  p\|_{C( B(x_0,2r))}
&\le
2^{\doub}
\|  p\|_{C(B(x_0,r))},
\\
\label{F_LM}
\|\nabla p\|_{C(B(x_0,r))}
&\le 
\frac{{\markov}}{r}
\|  p\|_{C( B(x_0,r))}.
\end{align}
\end{assumption}
We will discuss the applicability to restricted polynomials on algebraic manifolds later. 
For now, we point
out that such results have been shown in \cite{fefferman1996local}  with constants ${\doub}$ and ${\markov}$ which
depend on the polynomial degree.

In this section, we prove existence of a local $\varPi$-reproduction on compact subsets 
$\Omega\subset \M$ which satisfy an interior cone condition. 
 To set up the definition,
we define a basic Euclidean cone 
with parameters $r>0$, 
$0<\omega<\pi/2$ and $v\in \Sph^{\d-1}$, as
the set
$$C_{r,\omega,v} := 
\{\revision{z}\in \R^{d} \mid \|\revision{z}\|\le  r \text{ and } v^T \revision{z} \ge \cos(\omega)\}.
\revisioncom{Referee 2: Minor Comment 2}
$$
The interior cone condition for $\Omega\subset \M$ is similar to
a  Euclidean cone condition, but involves geodesic cones.

\begin{assumption} 
\label{A_domain}
We assume that $\Omega\subset \M$ is compact,
and that it satisfies an interior cone
condition of radius $r^{\flat}>0$ and aperture $\omega<\pi/2$:
for each point $z$ of $\Omega$, there is a direction
$v\in \Sph^{\d-1}$ so that 
$\Exp_z(C_{r^{\flat},\omega,v})\subset \Omega$.
\end{assumption}

We define the basic parameter, combining 
the injectivity radius $r_{\Omega}$,
threshold distances for the Markov and doubling inequalities
$r^{\sharp}$,
and the cone parameter $r^{\flat}$:
 $$r^{*}:= \min(r_{\Omega},r^{\sharp},r^{\flat}).$$
It follows
 that there exist constants $\alpha_{\Omega}\le \revision{\omega}_{\Omega}$\revisioncom{Referee 2: Minor Comment 3} 
 so that for any $x\in \Omega$ 
and $r\le r^*$,
\begin{equation}\label{ball_bound}
 \alpha_{\Omega}r^d \le \mu(B(x,r)\cap \Omega)\le \omega_{\Omega} r^d
 \end{equation}
 (this is a direct consequence of   (\ref{eq:Lp_equivalence}),with $p=1$ and $u= \chi_{C_{r,\omega,v}}$).

We make use of the following result, which follows easily 
from \cite[Lemma A.7]{HNW-poly} and
shows that cones contain interior balls.

\begin{lemma}
\label{ball_in_a_cone}
\revision{Suppose $\M$ is a Riemannian manifold,
  $\Omega\subset \M$  is compact
and $\CC=\Exp_z(C_{r^{\flat},\omega,v})\subset \Omega$.
Then for 
$0<\rho<r^{\flat}/(1+\sin \omega)$, 
and
 $\zeta=\Exp_z(\rho v)$, we have   
$\overline{B(\zeta,r)}\subset \CC$
where $r=\Gamma_1 \rho \sin(\omega)$ is determined by the cone aperture
and the constant of metric equivalence appearing in (\ref{m_e}).}\revisioncom{Referee 2: Minor Comment 4}
\end{lemma}
\begin{proof}
Note that 
$\rho\sin\omega\le r^{\flat}-\rho$ and
$r\le \Gamma_1(r^{\flat}-\rho)$. Thus, if $\xi\in B(\zeta,r)$, then 
$$\dist(z,\xi)< \rho+r \le
 \Gamma_1 r^{\flat} +(1-\Gamma_1)\rho< r^{\flat}.$$ Hence $B(\zeta,r)\subset B(z,r^{\flat})$.

For $\xi \in B (z,r^{\flat})\setminus \CC$, let $\xi = \Exp_{z}(t u)$ with $t<r^{\flat}$ and $u\in \Sph^{\d-1}$.
Then $tu\notin C_{r^{\flat},\omega,v}$, and hence 
$|tu -\rho v| > \rho \sin \omega$.
Thus $\dist(\xi,\zeta) \ge \Gamma_1 |tu-\rho v| > r$, so $\xi\notin \overline{B(\zeta,r)}$.
\end{proof}

For a point
$x_0\in \Omega$, we 
can use  Lemma \ref{ball_in_a_cone}
and the doubling property (\ref{F_Doubling})
 to control the size of
$\|p\|_{C(B(x_0,R))}$ 
by using 
$\|p\|_{C(B(x_0,R)\cap \Omega)}$.

\begin{lemma} 
\label{int_ext_sup}
Suppose $\varPi$ satisfies Assumption \ref{A_function_space}
and $\Omega\subset \M$ satisfies Assumption \ref{A_domain}.
Then for any $x_0\in \Omega$ and 
$R\le r^*$,
$$\|p\|_{C(\overline{B(x_0,R)})} \le
2^{\doub}  
\left( \frac{2+\sin \omega}{\Gamma_1\sin\omega}\right)^{\doub}
 \|p\|_{C(\overline{B(x_0,R)}\cap \Omega)}.$$
\end{lemma}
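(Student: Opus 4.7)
The plan is to place a small geodesic ball $\overline{B(\zeta,r)}$ that sits inside both $\Omega$ and $\overline{B(x_0,R)}$, then inflate it by iterating the doubling inequality (\ref{F_Doubling}) until the enlarged ball covers $\overline{B(x_0,R)}$. The small ball is produced by feeding the interior cone at $x_0$ (guaranteed by Assumption~\ref{A_domain}) into Lemma~\ref{ball_in_a_cone}.

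Concretely, I would fix the direction $v\in\Sph^{d-1}$ from Assumption~\ref{A_domain} so that $\CC:=\Exp_{x_0}(C_{r^{\flat},\omega,v})\subset\Omega$, and set $\rho:=R/(1+\sin\omega)$. Since $R\le r^*\le r^{\flat}$, we have $\rho<r^{\flat}/(1+\sin\omega)$, so Lemma~\ref{ball_in_a_cone} applies and yields $\zeta:=\Exp_{x_0}(\rho v)$ and $r:=\Gamma_1\rho\sin\omega$ with $\overline{B(\zeta,r)}\subset\CC\subset\Omega$. We may assume $\Gamma_1\le 1$ (the left-hand inequality of (\ref{m_e}) only gets easier when $\Gamma_1$ is replaced by $\min(\Gamma_1,1)$), whence $r\le\rho\sin\omega$ and the triangle inequality combined with $\dist(x_0,\zeta)=\rho$ gives $\rho+r\le\rho(1+\sin\omega)=R$. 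Hence $\overline{B(\zeta,r)}\subset\overline{B(x_0,R)}\cap\Omega$.

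Next, since $\overline{B(x_0,R)}\subset\overline{B(\zeta,R+\rho)}$, choose the smallest integer $k$ with $2^k r\ge R+\rho$. A direct computation gives
\[
\frac{R+\rho}{r}=\frac{R/\rho+1}{\Gamma_1\sin\omega}=\frac{2+\sin\omega}{\Gamma_1\sin\omega},
\]
so $2^k\le 2(2+\sin\omega)/(\Gamma_1\sin\omega)$. Iterating (\ref{F_Doubling}) $k$ times at $\zeta$ then yields
\[
\|p\|_{C(\overline{B(x_0,R)})}\le\|p\|_{C(\overline{B(\zeta,2^k r)})}\le 2^{k\doub}\|p\|_{C(\overline{B(\zeta,r)})}\le 2^{\doub}\!\left(\frac{2+\sin\omega}{\Gamma_1\sin\omega}\right)^{\!\doub}\!\|p\|_{C(\overline{B(x_0,R)}\cap\Omega)}.
\]

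The main subtleties are of the bookkeeping variety rather than conceptual: one must verify that the single choice of $\rho$ simultaneously forces $\overline{B(\zeta,r)}$ inside the cone (hence $\Omega$) and inside $\overline{B(x_0,R)}$, and one must check that each of the $k$ applications of (\ref{F_Doubling}) is taken at a radius still below the threshold $r^{\sharp}$. The latter follows from $2^{k-1}r<R+\rho=(2+\sin\omega)R/(1+\sin\omega)<2R$, which is controlled by $R\le r^*$ (possibly after slightly reducing $r^*$, e.g.\ to $\min(r_{\Omega},r^{\sharp}/2,r^{\flat})$). No other difficulty is expected.
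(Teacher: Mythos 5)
Your proof is correct and follows essentially the same route as the paper's: truncate the interior cone at $x_0$ to radius $R$, place the inner ball $\overline{B(\zeta,r)}$ with $\rho=R/(1+\sin\omega)$ and $r=\Gamma_1\rho\sin\omega$ via Lemma~\ref{ball_in_a_cone}, and iterate the doubling inequality from radius $r$ up past $R+\rho$ to obtain the constant $2^{\doub}\bigl(\frac{2+\sin\omega}{\Gamma_1\sin\omega}\bigr)^{\doub}$. Your additional check that the iterated doubling radii remain below the threshold $r^{\sharp}$ (handled by a harmless shrinking of $r^*$) addresses a point the paper's proof passes over silently, but it does not alter the argument.
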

\begin{proof}
Fix $p\in \varPi$. 
Find $z_1\in\overline{B(x_0,R)}\cap \Omega$ 
and $z_0\in \overline{B(x_0,R)}$ (possibly different than $z_1$)
so that 
$|p(z_1)| = \|p\|_{C(\overline{B(x_0,R)}\cap \Omega)}$
and $|p(z_0)| = \|p\|_{C(\overline{B(x_0,R)})}$.

 By assumption, there is a cone 
 $\CC= \Exp_{x_0}(C_{R,\omega,v})$ 
 contained in 
 $\overline{B(x_0,R)}\cap \Omega$.
Let 
$\rho:=R/(1+ \sin(\omega)) $,
and  note that for $\zeta := \Exp_{x_0}(\rho v)$ and $r:= \Gamma_1 \rho \sin \omega $, we have
$$\|p\|_{C(\overline{B(\zeta,r)})} \le \|p\|_{C(\overline{B(x_0,R)}\cap \Omega)}\le |p(z_1)|$$ 
since
$\overline{B(\zeta,r)}$ 
is
contained in $\CC$ by Lemma \ref{ball_in_a_cone}.

At the same time, 
$\dist(z_0,\zeta) \le R+\rho$, 
so $z_0\in \overline{B(\zeta, R+\rho)}$. 
Find $j\in\N$ so that 
$
2^{j-1}\Gamma_1{ \sin \omega }
\le 
2+ \sin \omega
<
2^j
\Gamma_1
{ \sin \omega }$,
so $j$ depends only on $\Gamma_1$ and $\omega$.
By the definition of $\rho$, we have
$ R+\rho<2^j \Gamma_1 \rho\sin(\omega)   $,
so by applying the doubling inequality 
(\ref{F_Doubling}) $j$ times, 
we have that
%
$$
|p(z) |\le 
\|p\|_{C(\overline{B(\zeta, R+\rho)})} 
\le 
2^{j\doub}
\|p\|_{C(\overline{B(\zeta,r)})} 
\le 2^{\doub}  \left( \frac{2+\sin \omega}{\Gamma_1\sin\omega}\right)^{\doub}|p(z_1)|
$$
%
and the lemma follows.
\end{proof}

\subsection{ A lower bound}

Under assumptions \ref{A_function_space} and \ref{A_domain}, let us define
$$\kappa :=  {\markov} 2^{2\doub}
\left( \frac{2+\sin \omega}{\Gamma_1\sin\omega}\right)^{\doub} \frac{1+\Gamma_1 \sin\omega}{\Gamma_1 \sin \omega }$$
which will be used to describe the support radius of the norming
set.

\begin{lemma}
\label{L_main_ball_estimate}
 Suppose $\M$ is a Riemannian manifold, 
 $\varPi\subset C^1(\M)$ satisfies 
 Assumption \ref{A_function_space} 
 and 
 $\Omega\subset\M$ satisfies 
 Assumption \ref{A_domain}.
Then
for any 
 $x_0\in \Omega$, 
 $0<\epsilon<1$
 and
 $R<r^*/(1+\epsilon)$ 
  there is  a ball 
  $\tilde{B} := 
   \overline{ B(\zeta, \frac{\epsilon R}{\kappa })}
  \subset \Omega\cap \overline{ B(x_0,(1+\epsilon)R)}$
  so that for any  $p\in \varPi$,
\begin{align*}
 \min_{\xi\in\tilde{B}}|p(\xi)|
 \ge 
 (1-\epsilon)
 {\|p\|_{C(\overline{B(x_0,R)}\cap \Omega)}}
\end{align*}
holds.
\end{lemma}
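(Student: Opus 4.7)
The plan is to select a point $z_1\in\overline{B(x_0,R)}\cap\Omega$ where $|p|$ attains its maximum $M:=\|p\|_{C(\overline{B(x_0,R)}\cap\Omega)}$, and then use the interior cone at $z_1$ (not at $x_0$) to position $\tilde B$ so that three competing constraints hold simultaneously: (i) $\tilde B\subset \Omega$; (ii) $\tilde B\subset \overline{B(x_0,(1+\epsilon)R)}$; and (iii) $\tilde B\subset B(z_1,R)$, which is what lets a Markov bound on $B(z_1,R)$ control the oscillation $|p(z_1)-p(\xi)|$ across $\tilde B$.

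Invoking Assumption \ref{A_domain} at $z_1$ furnishes a cone $\CC=\Exp_{z_1}(C_{r^{\flat},\omega,v})\subset\Omega$, and I would then apply Lemma \ref{ball_in_a_cone} with the calibrated choice $\rho:=\epsilon R/(\kappa\Gamma_1\sin\omega)$ and $\zeta:=\Exp_{z_1}(\rho v)$, so that the inscribed ball produced by the lemma has exactly the prescribed radius $\Gamma_1\rho\sin\omega=\epsilon R/\kappa$ and lies inside $\CC\subset\Omega$. The hypothesis $\rho<r^{\flat}/(1+\sin\omega)$ needed to apply Lemma \ref{ball_in_a_cone} follows from $R<r^{*}/(1+\epsilon)\le r^{\flat}$ together with $\kappa\Gamma_1\sin\omega\ge 1+\sin\omega$, which is immediate from the definition of $\kappa$. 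A direct triangle-inequality computation, using the factor $(1+\Gamma_1\sin\omega)/(\Gamma_1\sin\omega)$ built into $\kappa$, then verifies conditions (ii) and (iii).

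For the quantitative lower bound, I would chain three ingredients. The Markov inequality (\ref{F_LM}) applied on $B(z_1,R)$ (valid because $R<r^{\sharp}$) yields $\|\nabla p\|_{C(B(z_1,R))}\le (\markov/R)\,\|p\|_{C(B(z_1,R))}$. Since $B(z_1,R)\subset B(x_0,2R)$, the doubling estimate (\ref{F_Doubling}) followed by Lemma \ref{int_ext_sup} collapses $\|p\|_{C(B(z_1,R))}$ to $2^{2\doub}\bigl((2+\sin\omega)/(\Gamma_1\sin\omega)\bigr)^{\doub}M$. For any $\xi\in\tilde B$, integrating $\nabla p$ along the minimizing geodesic from $z_1$ to $\xi$---which remains inside $B(z_1,R)$ because $\dist(z_1,\xi)<R$ lies well within the injectivity radius---gives
\[
|p(z_1)-p(\xi)|\;\le\;\dist(z_1,\xi)\,\|\nabla p\|_{C(B(z_1,R))}\;\le\;\epsilon M,
\]
where the remaining constants telescope to exactly $\epsilon$ by the very construction of $\kappa$. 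The reverse triangle inequality $|p(\xi)|\ge |p(z_1)|-|p(z_1)-p(\xi)|$ then yields the claimed bound $(1-\epsilon)M$.

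The principal obstacle is the geometric calibration rather than any deep analytic argument: the three inclusions (i)--(iii) pull $\tilde B$ in competing directions, and the precise form of $\kappa$ is engineered so that the Markov constant $\markov$, the doubling factor $2^{2\doub}$, the interior-to-exterior conversion factor $\bigl((2+\sin\omega)/(\Gamma_1\sin\omega)\bigr)^{\doub}$ from Lemma \ref{int_ext_sup}, and the geometric factor $(1+\Gamma_1\sin\omega)/(\Gamma_1\sin\omega)$ all combine so that the ball of radius $\epsilon R/\kappa$ simultaneously fits within $\Omega$, within $\overline{B(x_0,(1+\epsilon)R)}$, and close enough to $z_1$ for the oscillation to collapse to precisely $\epsilon M$.
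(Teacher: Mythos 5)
Your proposal is correct and follows essentially the same route as the paper: choose the max point $z_1$ of $|p|$ on $\overline{B(x_0,R)}\cap\Omega$, inscribe the ball of radius $\epsilon R/\kappa$ in the cone at $z_1$ via Lemma \ref{ball_in_a_cone} with $\rho=\epsilon R/(\kappa\Gamma_1\sin\omega)$, and control the oscillation over $\tilde B$ by integrating $\nabla p$ along a geodesic and chaining the Markov inequality, the doubling estimate, and Lemma \ref{int_ext_sup} so the constants collapse to $\epsilon$ by the design of $\kappa$. The only (immaterial) difference is that you apply the Markov inequality on $B(z_1,R)$ rather than on $\overline{B(x_0,(1+\epsilon)R)}$ as the paper does, which changes the final bound from $\epsilon/(1+\epsilon)$ to exactly $\epsilon$; both suffice.
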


\begin{proof}
Without loss of generality, let
$p \in\varPi$ satisfy $\|p\|_{C(\overline{B(x_0, R)}\cap\Omega)}=1$.
For any point $z\in \Omega\cap \overline{B(x_0,R)}$ where $|p(z)| =1$,
there is a cone $\CC = \Exp_{z}(C_{r^*,\omega,v})$ 
which is centered at $z$ and contained in $\Omega$.

By Lemma \ref{ball_in_a_cone},
there is a closed ball of radius 
$
\frac{\epsilon R}{\kappa }$
centered at 
$\zeta= 
\Exp_z(
\frac{ 
\epsilon R}{\kappa \Gamma_1 \sin\omega} v)$
contained in $\CC$. 
By the triangle inequality, 
if 
$\xi\in 
\overline{B(\zeta,
\frac{\epsilon R}{\kappa })}$, 
then 
$$ 
\dist(\xi,z)
\le
\frac{\epsilon}{\kappa}
\frac {1+\Gamma_1 \sin\omega}{ \Gamma_1 \sin\omega}\, R
\le \epsilon .
$$
By another application of the triangle inequality,
$\dist(\xi,x_0) \le
(1+{\epsilon})
R$ follows.

\begin{figure}[t]
\begin{tikzpicture}[thick,font=\sffamily\Large]
\scalebox{0.4}{
\rotatebox{70}{
\draw[very thick] (0,0) --  (5,4);
\draw[very thick] (0,0) --  (5,-4);
\draw[very thick] (0,0) --  (5,-4);
\draw[very thick] (5,-4) arc (-30:30:8) ;
\draw (3,0) circle (1.88cm) ;
\filldraw (3,0) circle[radius=1.5pt];
}
}
\node[above right=1pt of {(0.4,0.5)}] {$\zeta$};
\filldraw (0,0) circle[radius=1.5pt];
\node[below left=-2pt of {(0,0)}] {$z$};
\draw[color=black,domain=-2.5:3] 
  plot (\x,{(\x*\x*\x/7)-(3*\x)/8});
 \filldraw (-1.5,0.6) circle[radius=1.5pt];
\node[above left=-2pt of {(-1.5,0.6)}] {$x_0$}; 
\node[ left=-2pt of {(1.2,2)}] {$\mathcal{C}$};
\node[ left=-2pt of {(2,3)}] {$\Omega$};
\end{tikzpicture}
\vspace*{-15mm}
\caption{A diagram indicating the points $x_0$, $z$ and $\zeta$.\label{fig:CONE}}
\end{figure}
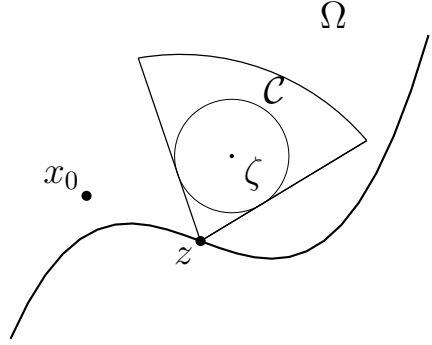

Thus  we may express 
$\xi \in 
\overline{B(\zeta,
\frac{ \epsilon R}{\kappa })}$ 
as
$\xi= \Exp_z( \dist(\xi,z) w)$ for some  vector 
$w\in \Sph^{\d-1}$.
Consider  the geodesic curve $\gamma:[0,\dist(\xi,z)]\to \M$ defined by
$\gamma(t) =\Exp_{z}
\Bigl( t w\Bigr)$.
Because the image of this curve lies is in 
$\overline{B(z, \dist(\xi,z))}$ 
which is contained in
$  \overline{B(x_0, (1+\epsilon)R)}$, 
and because $\|\gamma'(t)\|_{T_0^1\M_{\gamma(t)}}=1$, we have
\begin{align*}
	\left| p(\xi) - p(z) \right| 
& = 
   \left| \int_{0}^{\dist(\xi,z)} \nabla p \left(\gamma(t)\right) \gamma'(t) \diff t\right| \\
&\le 
  \dist(\xi,z)
\left\|\nabla p\right\|_{C(\overline{B(z,\dist(z,\xi))})}
 \\
&\le  
 \frac{\epsilon}{\kappa}
\frac {1+\Gamma_1 \sin\omega}{ \Gamma_1 \sin\omega}R
 \frac{\markov}{
(1+\epsilon)R }
 \|p\|_{C( \overline{B(x_0, 
(1+\epsilon) 
 R)})}  
\end{align*}
by (\ref{F_LM}). 
By the doubling inequality, we have
$  \|p \|_{C(\overline{B(x_0, 2R)})} \le  2^{\doub}  \|p\|_{C(\overline{B(x_0, R)})} $.
Applying Lemma \ref{int_ext_sup} we have
$\|p\|_{C(\overline{B(x_0,R)})}\le   
2^{\doub}\left( \frac{2+\sin \omega}{\Gamma_1\sin\omega}\right)^{\doub}
\|p\|_{C(\overline{B(x_0,R)}\cap \Omega)} $.
Recalling that  $z$ was chosen so that
$\|p\|_{C(\overline{B(x_0, R)}\cap \Omega)} = |p(z)|=1$, we have
$$
\bigl| |p(\xi) |- 1 \bigr| 
\le 
\frac{\epsilon}{1+\epsilon}
{\markov} 2^{2\doub}
\left( \frac{2+\sin \omega}{\Gamma_1\sin\omega}\right)^{\doub} 
\frac {1+\Gamma_1 \sin\omega}{\Gamma_1 \sin\omega}\frac{1}{\kappa}
 \le 
 \epsilon
.$$
Thus 
$
p(\xi)
\ge 
1-\epsilon
$
and the lemma follows.
\end{proof}



An immediate consequence of this lemma is the following norming set result, which
holds for subsets $\Xi\subset \M$ which are sufficiently well-distributed (or {\em sufficiently dense}) in $\Omega$, 
as measured
by the {\em fill-distance} $$h_{\Xi,\Omega} := \max_{x\in\Omega} \dist(x,\Xi),$$ which
we often abbreviate to $h=h_{\Xi,\Omega} $ when the context is clear. In this case,
being sufficiently well-distributed means that $h_{\Xi,\Omega}$ is less than some
given quantity which depends on Assumptions \ref{A_function_space} and \ref{A_domain}, as well as user-defined parameters. 
%
%
%
\begin{corollary}[Norming Set]
\label{C_norming}
 Suppose 
$\M$, $\varPi$ and $\Omega$ are
as in Lemma \ref{int_ext_sup}.
 If $\epsilon>0$, $R\le \frac{r^*}{1+\epsilon}$ and
 $\Xi \subset \Omega$ has fill distance 
 which  satisfies 
 $
 h:=h_{\Xi,\Omega}\le 
\epsilon  \frac{ R}{\kappa}
 $,
%
  then for any $x\in \Omega$ and $p\in \varPi$
  we have the inequality
\begin{align*}
  \max_{\xi\in \Xi\cap B\left(x,{(1+\epsilon)}R\right)}|p(\xi)|
 \ge 
 (1-\epsilon)
 {\|p\|_{C(\overline{B(x,R)}\cap \Omega)}}.
\end{align*}
\end{corollary}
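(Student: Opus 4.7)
The plan is to deduce the corollary directly from Lemma \ref{L_main_ball_estimate}, which has done essentially all the analytic work. The corollary is essentially a pigeonhole/fill-distance rephrasing of that lemma: the lemma guarantees a full ball $\tilde{B}$ on which $|p|$ stays above $(1-\epsilon)\|p\|_{C(\overline{B(x,R)}\cap\Omega)}$, and the fill-distance hypothesis forces $\Xi$ to hit $\tilde{B}$.

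In detail, first I would fix $x\in\Omega$ and $p\in\varPi$, and verify that the hypothesis $R\le r^*/(1+\epsilon)$ is exactly what is needed to invoke Lemma \ref{L_main_ball_estimate} at the center $x_0=x$. The lemma then produces a point $\zeta\in\Omega$ and a ball
\[
\tilde{B} \;=\; \overline{B\!\left(\zeta,\tfrac{\epsilon R}{\kappa}\right)} \;\subset\; \Omega \cap \overline{B(x,(1+\epsilon)R)}
\]
on which $|p(\xi)|\ge (1-\epsilon)\|p\|_{C(\overline{B(x,R)}\cap\Omega)}$ for every $\xi\in\tilde{B}$.

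Second, I would use the fill-distance assumption. Since $\zeta\in\Omega$, the definition of $h_{\Xi,\Omega}$ gives some $\xi^{\ast}\in\Xi$ with $\dist(\xi^{\ast},\zeta)\le h\le \epsilon R/\kappa$, so $\xi^{\ast}\in\tilde{B}$. The inclusion $\tilde{B}\subset \overline{B(x,(1+\epsilon)R)}$ then places $\xi^{\ast}$ in $\Xi\cap B(x,(1+\epsilon)R)$ (modulo the cosmetic closed/open ball distinction, which one handles by replacing $R$ with a slightly larger value or arguing with closures).

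Third, combining these two observations gives
\[
\max_{\xi\in \Xi\cap B(x,(1+\epsilon)R)} |p(\xi)| \;\ge\; |p(\xi^{\ast})| \;\ge\; (1-\epsilon)\,\|p\|_{C(\overline{B(x,R)}\cap\Omega)},
\]
which is exactly the claim. I do not anticipate any serious obstacle here — the only mild subtlety is matching the open/closed ball conventions between the corollary and the lemma, but since the fill distance bound is a strict inequality away from the critical radius, this is a non-issue.
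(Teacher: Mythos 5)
Your proposal is correct and follows exactly the paper's own argument: invoke Lemma \ref{L_main_ball_estimate} to obtain the ball $\tilde{B}=\overline{B(\zeta,\epsilon R/\kappa)}$ on which $|p|\ge(1-\epsilon)\|p\|_{C(\overline{B(x,R)}\cap\Omega)}$, then use the fill-distance bound to place a point of $\Xi$ inside $\tilde{B}$. The only (shared, cosmetic) looseness is the strict-versus-nonstrict mismatch between the lemma's hypothesis $R<r^*/(1+\epsilon)$ and the corollary's $R\le r^*/(1+\epsilon)$, which the paper also does not address.
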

A tiny but useful modification of this result employs $r:=\epsilon R$, in which case 
%
\begin{equation}
\label{norming_II}
  \max_{\xi\in \Xi\cap B\left(x,\frac{(1+\epsilon)}{\epsilon}r\right)}|p(\xi)|
 \ge 
 (1-\epsilon)
 {\|p\|_{C(\overline{B(x,\frac{r}{\epsilon})}\cap \Omega)}}
 \ge  
 (1-\epsilon)
 {\|p\|_{C(\overline{B(x,{r})}\cap \Omega)}}.
 \end{equation}
 %
 provided $r\le\frac{\epsilon}{1+\epsilon} r^* $ and $h\le \frac{r}{\kappa}$.
%
%
%
\begin{proof}
By  Lemma \ref{L_main_ball_estimate}, 
there is a ball  $\tilde{B}=\overline{B(\zeta,\frac{\epsilon R}{\kappa})}$ 
contained in  
$ {B(x,{(1+\epsilon)}R)}\cap \Omega$
where 
$|p(z)|\ge (1-\epsilon)
\|p\|_{C(B(x,{(1+\epsilon)}R)\cap \Omega)} $
for all $z\in \tilde{B}$.
By the assumption on $h$, 
 $\overline{B(\zeta,h)}\subset  \overline{B( \zeta, \frac{\epsilon R}{\kappa})} =\tilde{B}$ 
contains a point $\xi\in \Xi$.
\end{proof}

Another consequence of Lemma \ref{L_main_ball_estimate} is a local Nikolskii-type  inequality.
%
%
%
\begin{corollary}[Nikolskii]
\label{C_Nikolskii}
Suppose 
$\M$, $\varPi$ and $\Omega$ are
as in Lemma \ref{int_ext_sup}.
Then there is a constant $\mathfrak{N}$ so that  for any
$1\le q<\infty$,
 $x_0\in \Omega$ and 
$R\le r^*$,
$$\|p\|_{C(\overline{B(x_0,R)}\cap \Omega)} \le
\mathfrak{N} R^{-d/q}
 \|p\|_{L_q(B(x_0,R)\cap \Omega)}.$$
\end{corollary}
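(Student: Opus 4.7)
The plan is to exhibit a ball $\tilde B\subset B(x_0,R)\cap\Omega$ of radius proportional to $R$ on which $|p|$ is bounded below by a constant multiple of $M:=\|p\|_{C(\overline{B(x_0,R)}\cap\Omega)}$; integrating $|p|^q$ over $\tilde B$ and invoking the volume lower bound (\ref{ball_bound}) then produces the desired $R^{-d/q}$ factor directly. This reduces the Corollary to a concatenation of the pointwise lower bound of Lemma \ref{L_main_ball_estimate} with the sup-norm comparison of Lemma \ref{int_ext_sup} and the doubling property (\ref{F_Doubling}).

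First I would apply Lemma \ref{L_main_ball_estimate} at center $x_0$ with parameter $\epsilon=1/2$ and radius $R':=R/2$. Since $R'=R/2<r^*/(1+\epsilon)$, the lemma delivers a closed ball $\tilde B$ of radius $\epsilon R'/\kappa=R/(4\kappa)$ contained in $\overline{B(x_0,3R/4)}\cap\Omega\subset B(x_0,R)\cap\Omega$, on which
$$\min_{\xi\in\tilde B}|p(\xi)|\ge \tfrac12\,\|p\|_{C(\overline{B(x_0,R/2)}\cap\Omega)}.$$
Next, I would restore the correct sup-norm on the right: the doubling inequality (\ref{F_Doubling}) gives $\|p\|_{C(\overline{B(x_0,R)})}\le 2^{\doub}\|p\|_{C(\overline{B(x_0,R/2)})}$, and Lemma \ref{int_ext_sup} then bounds $\|p\|_{C(\overline{B(x_0,R/2)})}$ by a constant multiple of $\|p\|_{C(\overline{B(x_0,R/2)}\cap\Omega)}$. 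Combined, this produces an explicit $c>0$ (depending only on $\doub,\omega,\Gamma_1$) for which $\|p\|_{C(\overline{B(x_0,R/2)}\cap\Omega)}\ge c M$, so $|p|\ge (c/2)M$ throughout $\tilde B$.

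Finally, because $\tilde B\subset\Omega$ has radius $R/(4\kappa)\le r^*$, the measure estimate (\ref{ball_bound}) gives $\mu(\tilde B)\ge \alpha_\Omega\bigl(R/(4\kappa)\bigr)^d$, and therefore
$$\|p\|_{L_q(B(x_0,R)\cap\Omega)}^q\ge \int_{\tilde B}|p|^q\,\diff\mu \ge (cM/2)^q\,\alpha_\Omega \bigl(R/(4\kappa)\bigr)^d.$$
Taking $q$-th roots isolates $M\le \mathfrak{N}\,R^{-d/q}\|p\|_{L_q(B(x_0,R)\cap\Omega)}$; the remaining $q$-dependent factors $\alpha_\Omega^{-1/q}$ and $(4\kappa)^{d/q}$ are each at most $\max(1,\alpha_\Omega^{-1})$ and $\max(1,4\kappa)^{d}$ respectively for $q\ge 1$, so they can be absorbed into a single constant $\mathfrak{N}$ that is uniform in $q$.

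The only subtlety — and the main bookkeeping obstacle — is that Lemma \ref{L_main_ball_estimate} naturally returns a lower bound against the sup-norm on a \emph{shrunken} concentric ball, so one must carefully combine doubling with Lemma \ref{int_ext_sup} to recover the sup-norm on the original ball $\overline{B(x_0,R)}\cap\Omega$. Once this identification is made, the remaining steps are routine estimates, and care with how the $q$-dependent constants collapse ensures a single uniform $\mathfrak{N}$ as claimed.
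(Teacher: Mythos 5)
Your proposal is correct and follows essentially the same route as the paper's proof: apply Lemma \ref{L_main_ball_estimate} with $\epsilon=1/2$ at radius $R/2$ to get a ball $\tilde B\subset B(x_0,R)\cap\Omega$ of radius proportional to $R$ on which $|p|$ is comparable to $\|p\|_{C(\overline{B(x_0,R/2)}\cap\Omega)}$, recover the sup-norm on the full ball via doubling plus Lemma \ref{int_ext_sup}, and integrate over $\tilde B$ using the volume lower bound (\ref{ball_bound}). Your explicit remark that the $q$-dependent constants $\alpha_\Omega^{-1/q}$ and $(4\kappa)^{d/q}$ are uniformly bounded for $q\ge 1$ is a small point the paper leaves implicit, but the argument is the same.
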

\begin{proof}
Let $R'=R/2$.
By applying Lemma \ref{L_main_ball_estimate} with $\epsilon = \frac12$, we
have that  
$$ 
\int_{\tilde{B}}|p(x)|^q\diff x
\ge 
\frac1{4}  \mathrm{vol}(\tilde{B})\|p\|_{C(B(x_0,R')\cap\Omega)}^q
$$
with $\tilde{B}$ a ball of radius $\frac{ R'}{\kappa } =\frac{ R}{2\kappa }$ contained in $B(x_0,2R')\cap \Omega$.
Thus, 
$$
\int_{B(x_0,R)\cap \Omega}|p(x)|^q\diff x
\ge 
\frac1{4}\alpha _{\M}
\Bigl( \frac{ R}{2\kappa} \Bigr)^d 
\|p\|_{C(B(x_0,R')\cap\Omega)}^q.
$$
By Lemma \ref{int_ext_sup} we have
\begin{eqnarray*}
 \|p\|_{C(B(x_0,2R')\cap \Omega)}
&\le& 2^{\doub}\|p\|_{C(B(x_0,R'))}\\
&\le&
 2^{2\doub}
\left( \frac{2+\sin \omega}{\Gamma_1\sin\omega}\right)^{\doub}
\|p\|_{C(B(x_0,R')\cap \Omega)}.
\end{eqnarray*}
and the result follows.
\end{proof}

\section{Main Result: local \texorpdfstring{$\varPi$}{varPi}-reproductions for \texorpdfstring{$\Omega$}{Omega}}
\label{S_Main}
We now prove existence 
of a local polynomial reproduction for sufficiently dense subsets $\Xi\subset \Omega$.  
This follows from a fairly standard argument
 using the  {\em norming set}
from Corollary \ref{C_norming}.
Note that throughout the remainder of the paper we define the threshold fill-distance
$$h_0:= \frac{\epsilon r^*}{(1+\epsilon)\kappa}$$
which depends on $0<\epsilon<1$ and  constants from Assumptions \ref{A_function_space} and \ref{A_domain}.

\subsection{Local Reproduction of functionals}
\revision{For $\alpha\in \Omega$, 
define $V_{\alpha}:=\varPi|_{(B(\alpha,\kappa h)\cap \Omega)}$.
Endow $V_{\alpha}$ with the norm
$$
\|p\|_{V_{\alpha}} 
:=
\sup_{z\in B(\alpha,\kappa h)\cap \Omega} |p(z)|
$$
and let $V_{\alpha}'$ denote the dual space
(i.e., the vector space of linear
functionals from $V_{\alpha}$ to $\R$)
equipped with norm 
$\lambda\mapsto \|\lambda\|_{V_{\alpha}'} 
= 
\sup_{p\in V_{\alpha}\setminus\{0\}} \frac{|\lambda p|}{\|p\|_{V_{\alpha}}}$.}\revisioncom{Referee 2: minor comment 5}

For $0<\epsilon<1$, the inequality (\ref{norming_II})  after
Corollary \ref{C_norming} (with $r=\kappa h$)
shows that  
the norm
$$\|p\|_{V_{\alpha}} :=\sup_{z\in B(\alpha,\kappa h)\cap \Omega} |p(z)|$$
is controlled above by $\|p|_{\Xi_{\alpha}}\|_{W_{\alpha,\epsilon} }$ where
$$
W_{\alpha,\epsilon} := \Bigl(\R^{\Xi_{\alpha,\epsilon}},\ell_{\infty}(\Xi_{\alpha,\epsilon})\Bigr)
\quad
\text{ and }
\quad 
\Xi_{\alpha,\epsilon}:=B\Bigl(\alpha,\frac{(1+\epsilon)}{\epsilon}\kappa h\Bigr)\cap \Xi.
$$
It follows that we can
stably represent any functional in $V_{\alpha}'$
by an element of $\ell_1(\Xi_{\alpha,\epsilon})$, 
as the next lemma demonstrates.
%
%
\begin{lemma}
\label{P_LPR}
Let $\M$ be a Riemannian manifold and
let $\varPi$ and $\Omega\subset \M$ satisfy
Assumptions \ref{A_function_space} 
and \ref{A_domain}, respectively. 
For $\alpha\in\Omega$, 
if $\lambda\in V_{\alpha}'$,
and
 $
 \Xi_{\alpha,\epsilon}
 \subset \Omega$
 with
  $h:=h_{
  \Xi_{\alpha,\epsilon}
  ,\Omega} \le h_0$,
  there is a map 
 $\aaa_{\lambda}:
 \Xi_{\alpha,\epsilon}
 \to \R$
 which satisfies
  the conditions
  \begin{itemize}
  \item $\mathrm{supp}(\aaa_\lambda)\subset \Xi_{\alpha,\epsilon}$ 
  \item  $\sum_{\xi\in
  \Xi_{\alpha,\epsilon}
  } |\aaa_\lambda(\xi)|\le (1-\epsilon)^{-1}\|\lambda\|_{V_{\alpha}'}$,
\item   $\sum_{\xi\in
\Xi_{\alpha,\epsilon}
} \aaa_\lambda(\xi) p(\xi) =\lambda p$
 for all 
 $
 p\in 
 V_{\alpha}
 =
 \varPi|_{(B(\alpha,\kappa h)\cap \Omega)}
 $.
\end{itemize}
 \end{lemma}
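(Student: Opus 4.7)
The plan is to cast the problem as a Hahn--Banach extension problem and then identify the dual of $\ell_\infty$ on a finite set with $\ell_1$. The role of the norming set property (\ref{norming_II}) will be to ensure that the sampling map has closed range and satisfies a quantitative injectivity estimate, which in turn controls the extension constant.

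First I would introduce the sampling operator $T: \varPi \to W_{\alpha,\epsilon}$ given by $Tp := p|_{\Xi_{\alpha,\epsilon}}$. Applying (\ref{norming_II}) with $x=\alpha$ and $r=\kappa h$ (note that the hypothesis $h \le h_0 = \frac{\epsilon r^*}{(1+\epsilon)\kappa}$ is exactly what makes this legal) gives
\begin{equation*}
    \|Tp\|_{W_{\alpha,\epsilon}} \;\ge\; (1-\epsilon)\,\|p\|_{C(\overline{B(\alpha,\kappa h)}\cap\Omega)} \;=\; (1-\epsilon)\,\|p\|_{V_{\alpha}}
\end{equation*}
for every $p\in\varPi$. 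In particular, $\ker T$ is contained in the subspace of elements of $\varPi$ vanishing on $B(\alpha,\kappa h)\cap\Omega$, so $T$ descends to an injective linear map $\widetilde{T}:V_\alpha \to W_{\alpha,\epsilon}$ that is bounded below by $1-\epsilon$.

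Next, I would use $\widetilde T$ to transfer $\lambda\in V_\alpha'$ to a functional on the image. Specifically, define $\mu: \widetilde{T}(V_\alpha) \to \R$ by $\mu(\widetilde{T}p):=\lambda(p)$; this is well-defined since $\widetilde{T}$ is injective, linear, and satisfies
\begin{equation*}
    |\mu(\widetilde{T}p)| \;\le\; \|\lambda\|_{V_\alpha'}\,\|p\|_{V_\alpha} \;\le\; (1-\epsilon)^{-1}\|\lambda\|_{V_\alpha'}\,\|\widetilde{T}p\|_{W_{\alpha,\epsilon}}.
\end{equation*}
The Hahn--Banach theorem then extends $\mu$ to a functional $\widetilde{\mu}$ on all of $W_{\alpha,\epsilon}$ with the same operator norm, so $\|\widetilde\mu\|_{W_{\alpha,\epsilon}'}\le (1-\epsilon)^{-1}\|\lambda\|_{V_\alpha'}$.

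Finally, since $\Xi_{\alpha,\epsilon}\subset\Xi$ is finite, the space $W_{\alpha,\epsilon} = (\R^{\Xi_{\alpha,\epsilon}},\|\cdot\|_{\ell_\infty(\Xi_{\alpha,\epsilon})})$ is finite dimensional and its dual is isometrically $\ell_1(\Xi_{\alpha,\epsilon})$. So $\widetilde\mu$ is represented by some $\aaa_\lambda:\Xi_{\alpha,\epsilon}\to\R$ with $\sum_\xi|\aaa_\lambda(\xi)| = \|\widetilde\mu\|\le (1-\epsilon)^{-1}\|\lambda\|_{V_\alpha'}$, and the reproduction identity follows by unwinding: $\sum_{\xi}\aaa_\lambda(\xi)p(\xi) = \widetilde\mu(Tp) = \mu(\widetilde Tp) = \lambda(p)$ for each $p\in\varPi$ representing a class in $V_\alpha$. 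Extending $\aaa_\lambda$ by zero outside $\Xi_{\alpha,\epsilon}$ gives the required support condition. I do not expect a major obstacle here; the only delicate point is being careful that $V_\alpha$ is a space of restrictions (rather than of functions in $\varPi$), which is exactly why the lower bound in (\ref{norming_II}) — phrased in terms of $\|p\|_{C(\overline{B(\alpha,\kappa h)}\cap\Omega)}$ — is what makes the quotient map $\widetilde T$ well-defined and bounded below.
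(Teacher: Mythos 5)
Your argument is correct and follows essentially the same route as the paper: the norming-set inequality (\ref{norming_II}) with $r=\kappa h$ gives the lower bound $1-\epsilon$ on the sampling operator, the functional is pulled back through its inverse on the range, extended by Hahn--Banach with the same norm, and then represented via the isometric identification of $\bigl(\ell_\infty(\Xi_{\alpha,\epsilon})\bigr)'$ with $\ell_1(\Xi_{\alpha,\epsilon})$. Your added care about $V_\alpha$ being a space of restrictions (so that the sampling map descends to an injective $\widetilde T$) is a welcome clarification but does not change the substance of the proof.
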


 \begin{proof}
 The inequality
  $\max_{\xi\in 
  \revision{\Xi}_{\alpha,\epsilon}
  }|p(\xi)|
 \ge 
(1-\epsilon)
 {\|p\|_{V_{\alpha}}}
 \revisioncom{Referee 2: Minor comment 6}$
 holds for all $p\in V_{\alpha}$ 
 by Corollary 
 \ref{C_norming}.
  In other words, the sampling operator
   $$
 S: V_{\alpha}
 \to 
 W_{\alpha,\epsilon}
:p
\mapsto 
p|_{
\Xi_{\alpha,\epsilon}
}
 $$ 
 is bounded below 
 by $1-\epsilon$.
Let
 $\mathsf{R}_S\subset
  W_{\alpha,\epsilon}
  $ denote the range of $S$.
 By restricting to $\mathsf{R}_S$, we have 
 $\|S^{-1}\|_{R_S\to V_{\alpha}} \le (1-\epsilon)^{-1}$, and
 so  the dual map
  $(S^{-1})':V_{\alpha}'\to (\mathsf{R}_S)'$ is bounded
 by $\|(S^{-1})'\|_{V_{\alpha}'\to (\mathsf{R}_S)'}\le (1-\epsilon)^{-1}$.
 Thus, the linear functional 
$(S^{-1})'
\lambda
= 
\lambda  \circ S^{-1}\in (\mathsf{R}_S)'$,  
has norm
$\|(S^{-1})'\lambda \|_{(\mathsf{R}_S)'}
\le 
(1-\epsilon)^{-1}\|\lambda\|_{V_{\alpha}'}$.
Furthermore, $(S^{-1})'
\lambda$
can be  extended to 
 $
 W_{\alpha,\epsilon}
 '$
by preserving its norm.
By identification of 
$\bigl(\ell_{\infty}(
\Xi_{\alpha,\epsilon}
)\bigr)'$ with 
$\ell_1(
\Xi_{\alpha,\epsilon}
)$,
 there is  
 $v\in\R^{
 \Xi_{\alpha,\epsilon}
 }
  $ 
 with 
$\|v\|_{\ell_1(
\Xi_{\alpha,\epsilon}
)}\le (1-\epsilon)^{-1} 
\|\lambda\|_{V_{\alpha}'}
$
for which 
$$
p(\alpha) = \delta_\alpha p  =
 \bigl((S^{-1})'\delta_\alpha \bigr)(Sp)= 
 \sum_{\xi\in 
 \Xi_{\alpha,\epsilon}
 } 
 v_{\xi} p(\xi)$$
 for all 
 $p\in 
 V_{\alpha}
 $. Letting 
 $$\aaa_{\lambda}(\xi):=
 \begin{cases} v_{\xi}&
 \text{for } \xi \in 
 \Xi_{\alpha,\epsilon}
 \\
 0&\text{otherwise}
 \end{cases}
 $$
completes the proof.
 \end{proof}
 
 \subsection{Local reproduction of point evaluation and directional derivatives}\label{SS_local_algebraic}
For $\alpha\in \Omega$, the functional $\delta_{\alpha}:p\mapsto p(\alpha)$
satisfies 
$|\delta_{\alpha}p|\le \sup_{z\in B(\alpha,\kappa h)\cap \Omega}|p(z)|$, or equivalently $\|\delta_{\alpha}\|_{V_{\alpha}'}\le 1$.
Thus there is $\aaa^0: \Xi \times \Omega \to \R$ for which
$$\sum_{\xi\in \Xi }{\aaa^0}(\xi,\alpha) p(\xi) =p(\alpha) $$
for all $p\in \varPi$ and $\alpha \in\Omega$.

Similarly, by Assumption \ref{A_function_space}, 
for a  tangent vector 
$v\in
T_0^1\M_{\alpha}
$,
the functional
$$\lambda_v:p\mapsto D_vp(\alpha) = \nabla p(\alpha) v$$ has norm 
$\|\lambda_v\|_{V_{\alpha}'}\le \frac{\markov}{\kappa h}\|v\|_{T_0^1\M_{\alpha}}$ by
(\ref{F_LM}).
Thus, we can define 
${\aaa^1}(\cdot,\cdot): \Xi \times \Omega \to 
T_1^0\M
$
pointwise, 
that means for any $v\in T_{0}^1\M_\alpha$, 
we set  
$ \aaa^1(\xi,\alpha) v
:=  
\aaa_{\lambda_{v}}^1(\xi)$, where $\aaa_{\lambda_{v}}^1$ is guaranteed by
Lemma \ref{P_LPR}. 
Thus, the map  
${\aaa^1}$
has
 the  following reproduction property: for all $p\in \varPi$ and $\alpha\in \Omega$
$$
\sum_{\xi\in \Xi }p(\xi) {\aaa^1}(\xi,\alpha)  =\nabla p(\alpha).
$$
Furthermore, the locality condition  
$\mathrm{supp}\bigl(\aaa^j(\cdot,z)\bigr)\subset B\bigl( z,(1+\epsilon)\kappa h/\epsilon \bigr)$
holds for   $j=0,1$,
as do the stability conditions
\begin{equation}
\label{LPR_stability_bounds}
\sum_{\xi\in \Xi} 
\bigl| {\aaa^0}(\xi,\alpha) \bigr|\le \frac{1}{1-\epsilon}
\quad
\text{ and }
\quad
\sum_{\xi\in \Xi} 
\bigl\| {\aaa^1}(\xi,\alpha) \bigr\|_{ T_1^0\M_\alpha
} \le  \frac{1}{1-\epsilon} \frac{\markov}{\kappa h}.
\end{equation}

We can achieve higher order analogs with a stronger  Markov inequality.
%
%
\begin{assumption}
 \label{A_Higher}
We  assume that $\varPi\subset C^{k}(\M)$
and  there is a $r_*>0$
so that for every $j\le k$, 
there is a constant
 $\markov_j$ 
 such that for every $p\in  \varPi$, 
$0<r<r_*$,  $x_0\in \M$,
the following inequality holds:
%
%
 \begin{equation}
 \label{higher_order_markov}
 \|\nabla^j p\|_{C(\overline{B(x_0,r)})} 
 \le 
 \frac{\markov_j}{r^j} \|p\|_{C(\overline{B(x_0,r)})}.
 \end{equation}
%
%
 \end{assumption}
%
%
If this holds, then for $v_1,\dots,v_k\in T_0^1\M_{\alpha}$,
 the functional $\lambda_{v_1,\dots,v_k}\in \varPi'$ of the form
 $$p\mapsto \lambda_{v_1,\dots,v_k} p= \nabla^kp(\alpha) (v_1,\dots,v_k)$$
 satisfies a bound of the form 
 $|\lambda_{v_1,\dots,v_k}p| 
 \le 
 \frac{\markov_k}{\kappa^k h^k}
 \|p\|_{C(B(\alpha,\kappa h)} \|v_1\|_{T_{0}^1\M_{\alpha} }\dots \|v_k\|_{T_{0}^1\M_{\alpha}}$.
 Setting $C:= \markov_j \kappa^{-j} $,
 Lemma \ref{P_LPR}  guarantees 
 existence of
 $\aaa^{k}:\Xi\times \Omega \to T_k^0\M$
 so that the 
 reproduction
  formula 
  $\sum_{\xi\in\Xi} p(\xi) \aaa^{k}(\xi,\alpha) = \nabla^kp(\alpha)$ holds for all $p\in \varPi$ and $\alpha \in \Omega$,
 along 
 with 
the support condition
  $\mathrm{supp}\bigl(\aaa^k(\cdot,\alpha)\bigr)\subset B(\alpha,(1+\epsilon)\kappa h/\epsilon)$
 and  
 the stability condition
 $
 \sum_{\xi\in \Xi} 
\bigl\| {\aaa^k}(\xi,\alpha) \bigr\|_{ T_k^0\M_\alpha} 
\le 
 \frac{C}{1-\epsilon} h^{-k}.
$

%
%
%
\subsection{Regularity of generalized polynomial reproductions}
\label{SS_regularity}
Suppose now that $\Omega\subset \M$ satisfies Assumption \ref{A_domain},
and $(E,\pi)$ is a real, $J$-dimensional vector bundle over $\M$ 
so that each fiber $E_z=\pi^{-1}(z)$ has norm
$\|\cdot \|_{E_z}$. Suppose further that $\opL:C^{\infty}(\M) \to C^{\infty}(\M;E)$ 
is a linear map taking smooth functions on $\M$ to smooth sections over $E$.

We generalize slightly the above construction, to consider
 $\tilde{a}:\Xi\times \Omega\to E$ 
 which satisfies, for all $z\in \Omega$
\begin{equation}\label{E_general}
\begin{cases} 
\revision{ \mathrm{supp }\ \tilde{a}(\cdot,z)\subset B(z,R),} \\
\revision{  \sum_{\xi\in \Xi}\|\tilde{a}(\xi,z)\|_{E_z}\le K,}\\
\revision{ (\forall p\in \varPi)\  \sum_{\xi\in \Xi}p(\xi) \tilde{a}(\xi,z)  = \opL p(z).}
 \end{cases}\revisioncom{Referee 2: Minor Comment 7}
\end{equation} 
For $\varPi\subset C^{\infty}(\M)$,
the results of the previous section fit this construction with
 $\opL =\nabla^k$,  $E= T_k^0\M$,
$R= (1+\epsilon)\kappa h/\epsilon$ 
and $K = \frac{C}{1-\epsilon} h^{-k}$.

The following proposition, generalized from the scalar, Euclidean result  \cite[Lemma 10]{RW},
 shows that such generalized local polynomial reproductions can be smooth, as well.
 \begin{lemma} 
 \label{L_regularity}
 If $\tilde{a}:\Xi\times \Omega\to E$ satisfies (\ref{E_general})
and if $\varPi\subset C^{\infty}(\M)$, then for any 
$\revision{\varepsilon}>0$\revisioncom{we
have changed $\epsilon$ to $\varepsilon$.},
there is an ${a}:\Xi\times \Omega\to E$
with ${a}(\xi,\cdot)\in C^{\infty}(\M)$ for all $\xi\in\Xi$, along with
\begin{equation*}
\begin{cases} 
\revision{ \mathrm{supp }\ {a}(\cdot,z)\subset B(z,R+\varepsilon),} \\
\revision{ \sum_{\xi\in \Xi}\| {a}(\xi,z)\|_{E_z}\le K+\varepsilon,}\\
\revision{ (\forall p\in \varPi)\  \sum_{\xi\in \Xi}p(\xi) {a}(\xi,z)  = \opL p(z).}
 \end{cases}\revisioncom{Referee 2: Minor Comment 7}
 \end{equation*}
\end{lemma}
\begin{proof}
Let $\{p_j\mid 1\le j\le \polDim\}$ 
be a basis for $\varPi$ (so $\polDim:=\dim \varPi$).
Pick $y\in\Omega$,
and let $\Xi_{y}:= \Xi \cap 
B(y, R)$.
Because 
$\{\delta_{\xi}\mid \xi\in \Xi_{y}\}$
is a norming set and therefore spans $\varPi'$, 
it contains a 
 subset 
$$\{\delta_{\xi_j}\mid 1\le j\le \polDim
\}
\subset \{\delta_{\xi}\mid \xi \in \Xi_{y}\}, $$
which is linearly independent in $(\varPi)'$.
Define complementary point sets  
$$\Xi_y^{\flat}:= \{\xi_1,\dots,\xi_{\polDim}\}\text{  and. }
  \Xi_y^{\sharp}:=\Xi_{y,\revision{\varepsilon}}\setminus \Xi_y^{\flat}.$$

Consider now a neighborhood $U_y$ of $y$ 
with chart $\psi_y:U_y\to V_y\subset \R^d$ satisfying $\psi_y(y)=0$,
and the 
trivialization of $E$ over $U_y$,
$\Psi_y: \pi^{-1}(U_y)\to U_y\times \R^{J}$.
For simplicity, we identify $\Psi_y$ with the  isomorphism it induces on the fibers $\pi^{-1}(z)$.
Sof
for $z\in U_y$, we write
$\Psi_y:\pi^{-1}(z)\to  \R^{J}$
in place of $\Psi_y:\pi^{-1}(z)\to \{z\}\times \R^{\revision{J}}$\revisioncom{Referee 2: Minor comment 8} 
(i.e., we compose $\Psi_y$ with
the map $(z,\vec{v})\mapsto \vec{v}$).

Consider the
function $G_y: V_y\times \R^{ J \times \polDim} \to \R^{J \times \polDim}$
defined, for $j\le \polDim$, by 
$$
\bigl(G_y(v,\vec{B})\bigr)_j :=
\Psi_y\bigl(\opL  p_j( \psi_y^{-1} v) \bigr)- 
\sum_{\xi_k\in \Xi_y^{\flat} } (\vec{B})_k p_j(\xi_k) 
-
\sum_{\zeta\in \Xi_y^{\sharp}}
 \Psi_y\bigl( \aaa(\zeta,y)\bigr)p_j(\zeta),
$$ 
where $(\vec{B})_k\in \R^J$ is the $k$th column of 
$\vec{B}\in \R^{J \times \polDim}
$. 
Note that the last term is constant in both $v$ and $\vec{B}$,
while  the middle term is constant in $v$ and linear in $\vec{B}$.
Namely, $\sum_{\xi_k\in \Xi_y^{\flat} } (\vec{B})_k p_j(\xi_k)$,
is the $j$th column of  $ \vec{B} \vec{P}\in \R^{J \times \polDim}$,
where $\vec{P}= \bigl( p_j(\xi_k)\bigr)_{k,j}\in \R^{\polDim \times \polDim}$
is a Vandermonde-type matrix for $\revision{\varPi}$\revisioncom{fixed a typo} and $\Xi_y^{\flat}$.

For $v\in V_y$, set
$\vec{B}^{\diamond}(v) := \bigl(\vec{A}(v) - \vec{C}\bigr) \vec{P}^{-1}$
with $\vec{A}(v)\in \R^{J \times \polDim}$ and $\vec{C}\in \R^{ J \times \polDim}$ 
defined column-wise as
$$\bigl(\vec{A}(v)\bigr)_j := \Psi_y\bigl(\opL  p_j( \psi_y^{-1} v) \bigr)
\quad
\text{ and }
\quad
\bigl(\vec{C}\bigr)_j 
:= 
\sum_{\zeta\in \Xi_y^{\sharp}}
 \Psi_y\bigl( \aaa(\zeta,y)\bigr)p_j(\zeta).
 $$
It follows that for all $v\in V_y$,  $G_y(v,\vec{B}^{\diamond}(v))=\vec{0}$.
Furthermore, for $v=0$,
the linear independence of
the functionals $ \{\delta_{\xi_j}\mid \xi_j\in \Xi_y^{\flat}\}$ guarantees that
$\vec{B}^{\diamond}(0)  =\left(\Psi_y({\aaa}(\xi_k,y))\right.)_{k\le \polDim}$.

From this, we define $a_y^*:\Xi\times U_y\to E$ 
at the point $z\in U_y$, with $v= \psi_y(z)$, as
$$a_y^*(\xi,z) = 
\begin{cases}
\Psi_z^{-1}\bigl( \vec{B}^{\diamond}(v)\bigr)_k
& 
\xi_k =\xi\in \Xi_y^{\flat} \\
\Psi_z^{-1}\Psi_y{\aaa}(\xi,y)& 
\xi \in \Xi_y^{\sharp}\\
0&\xi \in \Xi\setminus \Xi_{y}.
\end{cases}$$
For every $\xi\in \Xi$, $a_y^*(\xi,\cdot)$ is in $C^{\infty}(U_y)$.

Note that for $v=0$,
$
a_y^*(\xi,y)=
{\aaa}(\xi,y)
$ 
so
$\sum_{\xi\in\Xi} 
 \|a_y^*(\xi_k,y)\|_{E_y}
 \le K$ by assumption.
By continuity,
 there is a neighborhood $\tilde{U}_y\subset U_y$ of $y$ 
with corresponding Euclidean neighborhood 
$\tilde{V}_y=\psi_y(\tilde{U}_y)$ of $0$,
 so that  for all 
 $z\in  \tilde{U}_y$, 
 $\sum_{\xi\in \Xi}
\|{a}_y^*(\xi,z) \|_{E_z} \le 
K+\revision{\varepsilon}.$

By decreasing the neighborhood even more,
so that  $\tilde{U}_y\subset B(y,\revision{\varepsilon})$ holds,
we have for $\xi \in \Xi_{y}$ and $z\in \tilde{U}_y$, that
$$
\dist(z,\xi) 
\le 
\dist(z,y)
+\dist(y,\xi)
\le
R+\revision{\varepsilon}.$$ 
Thus if $\dist(z,\xi)>R+\revision{\varepsilon}$, then $a_y^*(\xi,z)=0$.

By compactness of $\Omega$, there is a finite cover of the form 
$\Omega = \bigcup_{\ell=1}^L \tilde{U}_{y_\ell}$. Denote by
${a}_\ell:\Xi \times \Omega\to \R$ 
the  extension by zero of  
${a}_{y_\ell}:\Xi \times \tilde{U}_{y_\ell}\to E$. 
Let $\bigl(\psi_\ell\bigr)_{\ell=1\dots L}$ 
be a smooth partition of unity subordinate to this cover: i.e.,
consisting of functions 
$\psi_\ell:\Omega\to[0,1]$ with 
$\mathrm{supp}(\psi_\ell)\subset \tilde{U}_{y_\ell}$ and
$\sum_{j=1}^L \psi_\ell =1$.
 Then ${a}:\Xi\times \Omega\to \R$ defined by
 ${a}(\xi,z):=
 \sum_{\ell=1}^L \psi_\ell(z) a_{\ell}(\xi,z)$
 is the desired smooth local polynomial reproduction.
\end{proof}

%
 %
 %
 %
 %
\section{Moving least squares (MLS)}
\label{S_MLS}
In this section, we consider a  counterpart 
to the construction of section \ref{S_Main}. 
It has the advantage that stable, local reproduction of derivatives follows automatically,
without need of a separate construction. 
An extra requirement (for our results, not for the implementability of the method)
 is quasi-uniformity of the point set $\Xi$, which is described below.

We consider
(\ref{eq:mlsprimal}),
 for
$ \varPi$ satisfying Assumption \ref{A_function_space}
 and $\Omega\subset \M$ satisfying \ref{A_domain}.
 Then for $f\in C(\Omega)$,
we define the MLS approximant via the pointwise 
formula
\begin{align}
\label{eq:genmlsprimal}
  \mathcal{M}_{\Xi}f(z):=p_z^*(z)
    \text{ where }
    p_z^*=\arg\min_{p \in \varPi} \sum_{\xi \in \Xi} \left(p(\xi) -f(\xi) \right)^{2} \weightkernel(\xi,z).
\end{align}
Here $z \in \M$ is a given  point and 
$\weightkernel$ is a given weight function.
Note that this is a reformulation of \eqref{eq:mlsprimal} with $\mathcal{P}_m(\R^d)$ replaced by $\varPi$,
and a general data vector $\bigl(y_{\xi}\bigr)_{\xi\in\Xi}$ replaced by
sampled data 
$\bigl(f(\xi)\bigr)_{\xi\in\Xi}$
(in other words, we express
the MLS approximant $\mathcal{M}_{\Xi}$ 
as an operator on $C(\Omega)$, 
rather than one acting on 
data).

\revision{For the remainder of this section, we set
$\centerCard:=\revision{\mathrm{card}}(\Xi)$ 
and $\polDim:=\dim(\varPi)$. 
We enumerate $\Xi=\{\xi_j\mid j\le \centerCard\}$
and provide a basis $\{p_j\mid j\le M\}$ for $\varPi$.}
\revisioncom{We moved this earlier in the section.}
\subsubsection*{Initial assumptions on $\weightkernel$}
To ensure the above problem is well-posed and stable,
we assume $\weightkernel$  
satisfies, for some constants $c_0\in(0,1)$ \& $c_1\in (0,1]$, the following:
\begin{align}
\dist(\zeta,z)\ge\delta& \Longrightarrow \weightkernel(\zeta,z)=0,\label{outer_support}\\
  \dist(\zeta,z)\le c_0 \delta &\Longrightarrow  \weightkernel(\zeta,z)\ge c_1.\label{inner_support}
\end{align}
In order  to ensure that each ball $B(z,c_0 \delta)$ contains enough points from $\Xi$ to stably reproduce $\varPi$ at $z$, we
also assume
\begin{equation}\label{delta_condition}
 \delta \ge \frac{3\kappa}{c_0} h.
\end{equation}

In this case, there is little benefit in allowing a variable $\epsilon$. 
(As described in Remark \ref{R_MLS_stability}, and in contrast to the theoretically constructed 
local $\varPi$ reproduction, 
the stability bounds we present for \eqref{eq:genmlsprimal} cannot be brought arbitrarily close to $1$.)
Thus, we select $\epsilon=1/2$.
By Lemma \ref{P_LPR}, we may take $c_0\delta \ge 3\kappa h$.
Both constants  $c_0$ and ${c_1}$ affect the stability of the scheme, as demonstrated below in Lemma \ref{L_MLS_stability}.

The choice $\weightkernel(\zeta,z) = \Phi(\frac{\dist(z,\zeta)}{\delta})$ for 
a  compactly supported, non-negative, continuous  $\Phi:[0,\infty)\to [0,\infty)$
will easily satisfy (\ref{outer_support}) and (\ref{inner_support}),
although this is not necessary (and {may} not be desirable for some problems).
For embedded manifolds, considered in section \ref{SS_MLS}, we use a weight function depending on the Euclidean distance, i.e., 
$\weightkernel(\zeta,z) = \Phi(\frac{|z-\zeta|}{\delta})$. 

For data which is highly non-uniform, it may be preferable to use a weight function
with support which varies spatially (so that $\weightkernel(\cdot,z)$ has support which changes
with $z$); in such a case, (\ref{outer_support}), (\ref{inner_support}) and (\ref{delta_condition})
could be replaced by suitable hypotheses on 
a map $x\mapsto \delta_x$ which reflects the local distribution of points near $x$.

\subsubsection*{Quasi-uniformity} For a point set $\Xi$, we consider 
the separation radius $$q:=q_{\Xi}:=\min_{\xi\in\Xi} \dist(\xi,\dist(\Xi\setminus\{\xi\})).
\revisioncom{We have removed the
mesh ratio $\rho=h/q$
modified mesh ratio $\tilde{\rho}=\delta/q$.}$$

\subsection{Shape functions\label{sec:shape_functions}}
The Backus-Gilbert MLS approach~\cite{BG} shows that 
\begin{align*}
    \mathcal{M}_{\Xi} f(z)=\sum_{\genfrac{}{}{0pt}{}{\xi \in \Xi}{\weightkernel(\xi,z)>0}} b^{\star}(\xi, z) f(\xi),
\end{align*}
where the {\em shape functions} $b^{\star}(\xi,\cdot): \M\to \R$ can be obtained pointwise via  
\begin{align}
\label{B-G}
    b^{\star}(\cdot,z)&=\arg\min_{c\in \R^{\Xi}} 
    \sum_{\genfrac{}{}{0pt}{}{\xi \in \Xi}{\weightkernel(\xi,z)>0}} \frac{1}{\weightkernel(\xi,z)} \left|c_{\xi}\right|^2\\
    \text{subject to} &\sum_{\genfrac{}{}{0pt}{}{\xi \in \Xi}{\weightkernel(\xi,z)>0}} c_{\xi} p(\xi) = p(z) \text{ for all } p \in \varPi. \nonumber
\end{align} 
\revision{We point out that (\ref{B-G}) and the MLS 
(\ref{eq:genmlsprimal})
are related by 
\begin{align}\label{E_BG}
     \mathcal{M}_{\Xi} f(z)=
     \p(z)(\PP^T \W(z) \PP )^{-1} \PP^T\W(z) f|_{\Xi}
     =
     \sum_{\xi\in\Xi}
     b^{\star}(\xi, z) f(\xi)
\end{align}
where $\PP=\bigl(p_j(\xi_k)\bigr)\in \R^{\centerCard\times M}$
is a full-rank Vandermonde matrix,
 $\p(z)\in \R^{M\times 1}$ is the vector
 with $j$th entry $(\p(z))_j=p_j(z)$,
and
$\W(z)\in \R^{\centerCard\times \centerCard}$ 
is the diagonal  matrix  chosen to have
$j$th diagonal  entry 
$
(\W(z) )_{j,j}$}
\revisioncom{Referee 2, main comment 1}.
We now show that $b^{\star}$ provides a stable, local $\varPi$-reproduction. 
A 
basic combinatorial estimate we need for this result 
is that for any $z\in \Omega$, and $R<r^*$, 
  \begin{equation}
 \label{quasi-uniform}
 \revision{\mathrm{card}}\{\xi \in \Xi\cap B(z,R)\}\le \frac{\mu(B(z,R)}{\min_{\xi} \mu(B(\xi,q))}
 \le
 \frac{\omega_{\Omega}R^d}{\alpha_{\Omega}q^d}
\end{equation}
holds.
Point sets where $q\sim R$, with $R=\delta$ or $h$, are called quasi-uniform \revision{if the constants of equivalence are independent of $\Xi$)}, 
and many estimates can be
expressed in terms of a mesh-ratio $R/q$. 
In this section, we  leave estimates in terms of cardinalities whenever possible, and make no explicit assumption of quasi-uniformity.\revisioncom{Here and throughout
we have replaced $\#()$ by $\mathrm{card}()$}
%
%
%
%
\begin{lemma}
\label{L_MLS_stability}
If $\varPi$ satisfies Assumption \ref{A_function_space},
 $\Omega\subset \M$ satisfies Assumption \ref{A_domain},
 then there is a constant $C$ so that
 $\Xi\subset \Omega$ is sufficiently dense 
 (with  $h\le h_0$)
  and
   if
  $\weightkernel$ satisfies (\ref{outer_support}), (\ref{inner_support})
  and (\ref{delta_condition})
 then for 
$z\in\Omega$,
  \begin{itemize}
   \item for all $p\in\varPi$, 
 $$p(z) = \sum_{\xi\in\Xi} b^{\star}(\xi,z)p(\xi),$$
  \item $\mathrm{supp} \bigl(b^{\star}(\cdot,z)\bigr) 
  \subset 
  B(z,  \delta)\cap \Omega$,
\item 
$\sum_{\xi\in\Xi} |b^{\star}(\xi,z)|
\le 
\revision{
\frac{2}{\sqrt{c_1}}
\Bigl(
\revision{\mathrm{card}}\bigl(\Xi\cap B(z,\delta) \bigr)
\Bigr)^{1/2}}
\le
2
\sqrt{\frac{\omega_{\Omega}}{c_1\alpha_{\Omega}}}
\left( \frac{\delta}{q}\right)^{d/2}
$.
 \end{itemize}
\end{lemma}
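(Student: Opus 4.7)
The strategy naturally splits into the three claims. Reproduction is immediate, since $\sum_\xi b^{\star}(\xi,z) p(\xi) = p(z)$ is precisely the constraint in the Backus--Gilbert formulation (\ref{B-G}). Locality follows because the sum in (\ref{B-G}) runs only over $\xi$ with $\weightkernel(\xi,z) > 0$, and by the outer-support hypothesis (\ref{outer_support}) every such $\xi$ satisfies $\dist(\xi,z) < \delta$; extending $b^{\star}(\cdot,z)$ by zero then gives the stated support inclusion.

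The substantive estimate is the stability bound. My plan is to compare $b^{\star}(\cdot,z)$ against a carefully chosen feasible competitor produced by Lemma \ref{P_LPR}. Concretely, I apply that lemma with $\alpha=z$, $\epsilon=1/2$, and $\lambda=\delta_z$: since $z$ lies in $\overline{B(z,\kappa h)\cap\Omega}$ we have $\|\delta_z\|_{V_z'} \le 1$, and the lemma then furnishes $a^0(\cdot,z)$ reproducing $\varPi$ with $\sum_\xi |a^0(\xi,z)| \le 2$ and $\mathrm{supp}\,a^0(\cdot,z) \subset B(z,3\kappa h)$. The key observation is that the density condition (\ref{delta_condition}) has been arranged precisely so that $3\kappa h \le c_0\delta$; consequently the support of $a^0(\cdot,z)$ sits inside $B(z,c_0\delta)$, where the inner-support hypothesis (\ref{inner_support}) forces $\weightkernel \ge c_1$, rendering $a^0(\cdot,z)$ feasible for (\ref{B-G}).

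Optimality of $b^{\star}(\cdot,z)$ then yields
\begin{equation*}
\sum_\xi \frac{|b^{\star}(\xi,z)|^2}{\weightkernel(\xi,z)} \le \sum_\xi \frac{|a^0(\xi,z)|^2}{\weightkernel(\xi,z)} \le \frac{1}{c_1}\Bigl(\sum_\xi |a^0(\xi,z)|\Bigr)^{\!2} \le \frac{4}{c_1},
\end{equation*}
after which a Cauchy--Schwarz step combined with the natural normalization $\weightkernel \le 1$ (consistent with $c_1 \in (0,1]$) and the combinatorial counting bound (\ref{quasi-uniform}) at $R=\delta$ gives
\begin{equation*}
\sum_\xi |b^{\star}(\xi,z)| \le \Bigl(\sum_\xi \tfrac{|b^{\star}(\xi,z)|^2}{\weightkernel(\xi,z)}\Bigr)^{\!1/2} \bigl(\#(\Xi \cap B(z,\delta))\bigr)^{1/2} \le 2\sqrt{\frac{\omega_\Omega}{c_1\alpha_\Omega}}\,\tilde\rho^{d/2}.
\end{equation*}

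The main obstacle is not the Cauchy--Schwarz calculation but rather producing the feasible competitor with simultaneously controlled $\ell_1$-norm \emph{and} with support contained in the region where $\weightkernel \ge c_1$. Lemma \ref{P_LPR} supplies exactly this, and the specific choice $\epsilon=1/2$ explains both the factor of $2$ appearing in the bound on $\sum_\xi|a^0(\xi,z)|$ and the factor $3\kappa/c_0$ built into the density condition (\ref{delta_condition}).
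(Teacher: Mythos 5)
Your proposal is correct and follows essentially the same route as the paper: both arguments establish reproduction and locality directly from the Backus--Gilbert formulation, and both obtain the stability bound by using the $\epsilon=\tfrac12$ local reproduction $\tilde a^0$ from Lemma \ref{P_LPR} as a feasible competitor (its support lying in $B(z,3\kappa h)\subset B(z,c_0\delta)$ where $\weightkernel\ge c_1$), invoking optimality to get $\sum_\xi \weightkernel(\xi,z)^{-1}|b^\star(\xi,z)|^2\le 4/c_1$, and finishing with Cauchy--Schwarz plus the counting bound (\ref{quasi-uniform}). The only cosmetic difference is that you bound the first Cauchy--Schwarz factor by the cardinality of $\Xi\cap B(z,\delta)$ directly rather than via $\sum_\xi\weightkernel(\xi,z)$, which amounts to the same use of $\weightkernel\le 1$.
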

\begin{proof}
From (\ref{B-G}) it is clear that $b^{\star}$ reproduces point evaluations. Because $\weightkernel$ satisfies (\ref{outer_support}),
it follows that if $\dist(\xi,z)\ge\delta$, then $b^{\star}(\xi,z)=0$.

To handle the third item, we apply Cauchy-Schwarz inequality, obtaining
$$\sum_{\xi \in \Xi}| b^{\star}(\xi,z) |\le
\sqrt{ \sum_{\genfrac{}{}{0pt}{}{\xi \in \Xi}{\weightkernel(\xi,z)>0}}{\weightkernel(\xi,z)}}
\sqrt{\sum_{\genfrac{}{}{0pt}{}{\xi \in \Xi}{\weightkernel(\xi,z)>0}} \frac{1}{\weightkernel(\xi,z)} 
\left|  b^{\star}(\xi,z)\right|^2}.
$$
Boundedness of the weight $\weightkernel$ and quasi-uniformity
of $\Xi$, via (\ref{quasi-uniform}), guarantee that 
$$
\left( \sum_{\genfrac{}{}{0pt}{}{\xi \in \Xi}{\weightkernel(\xi,z)>0}}{\weightkernel(\xi,z)}\right) 
\le 
\revision{\mathrm{card}}\Bigl\{\xi \in \Xi\mid \weightkernel(\xi,z)>0\Bigr\}
\le
 \frac{\omega_{\Omega}}{\alpha_{\Omega}}\left( \frac{\delta }{q}\right)^d   .
 $$
 To handle the second factor, we use the local $\varPi$-reproduction $\tilde{a}^0$ of section \ref{SS_local_algebraic},
 noting that  $\sum_{\xi \in \Xi}  \left| \tilde{a}^0(\xi,z)\right|^2<(\sum_{\xi \in \Xi} \left| \tilde{a}^0(\xi,z)\right|)^2\le (1-\epsilon)^{-2}$.
From (\ref{LPR_stability_bounds}), with $\epsilon=\frac12$, we have
$$
\left(\sum_{\genfrac{}{}{0pt}{}{\xi \in \Xi}{\weightkernel(\xi,z)>0}} \frac{1}{\weightkernel(\xi,\zeta)} \left| b^{\star}(\xi,z)\right|^2\right)
\le 
\left(\sum_{\genfrac{}{}{0pt}{}{\xi \in \Xi}{\dist(\xi,z)<c_0 \delta}} \frac{1}{\weightkernel(\xi,z)} \left| \tilde{a}^0(\xi,z)\right|^2\right)
\le  \frac{4}{c_1} .$$
\end{proof}
\begin{remark}
\label{R_MLS_stability}
Although parameters $ c_0<1 $ and  $\epsilon<1$ can be permitted to get arbitrarily close to $1$ (and $c_1$ may equal 1),
by following the above argument, the best stability estimate $C$ in 
$\sum_{\xi\in\Xi} |b^{\star}(\xi,z)|\le C$ 
will be no smaller than
$2\sqrt{\frac{\omega_{\Omega}}{c_1 \alpha_{\Omega}}}
\kappa^{d/2} \rho^{d/2}$,
where $\rho = h/q$.
This is  in contrast to 
$(1-\epsilon)^{-1}$ from (\ref{LPR_stability_bounds}), which can get arbitrarily close to $1$.
\end{remark}

\subsection{Construction and smoothness}
The solution to the variational problem (\ref{B-G}) is given by linear projection. 
Viewing $b^*(\cdot,z)$ as a column vector in 
$\R^{\centerCard\times 1}$
we have
%
%
\begin{equation}
\label{BG-formula}
b^*(\cdot,z) = \W(z) \PP(\PP^T\W(z) \PP)^{-1} \p(z).
\end{equation}
%
%
This follows from (\ref{E_BG}).
Please note, that condition \eqref{delta_condition} ensures that $\Xi \cap B(z,c_0\delta)$ contains a unisolvent set of centers. Thus $\PP$ is a full rank matrix, which makes the inverse $(\PP^T\W(z) \PP)^{-1}$ well-defined. 
Consequently, $b^*$ is in $C^k(\M)$ if $\varPi\subset C^k(\M)$ and each function $\weightkernel(\xi,\cdot)$ is as well.
Indeed, by linearity we have, for $z\in \mathrm{int}(\Omega)$, that
$$\nabla^k p(z) = \sum_{\xi\in \Xi} p(\xi) \nabla^k b^*(\xi,z)$$
(with covariant derivative
applied to the second entry).
It follows that 
$$\nabla^k b^*:\Xi\times \Omega\to T^{0}_{k}\M$$ 
reproduces $\nabla^k$ on $\varPi$ 
and is local  in the sense that
$\mathrm{supp}\nabla^k b^{\star}(\cdot,z) \subset B(z,\frac{ 3\kappa }{ c_0}h h)$.

  The following assumption is sufficient to get regularity bounds for the MLS shape functions,
  while allowing some flexibility in choosing the weight function.
 In addition to implying (\ref{outer_support}) and (\ref{inner_support}), 
  it allows weights which depend on
 different distances  (see Remarks  \ref{R_riemannian_metric} and \ref{R_euclidean_metric}).
  On its face, it may even allow for spatial variation of the weight function, although this is not
  considered in the present article.
  
%
%
%
\begin{assumption}
\label{A_weight}
Assume there exist 
constants
 $0< \vartheta_1\le  \vartheta_2<\infty $ and 
 $C_k$, for $k\in\N$, and a collection of functions 
 $\{F_{\xi}\in C^k(\Omega\setminus \{\xi\})\mid \xi\in \Omega\}$, 
so that for all $z\in B(\xi,r^*)\setminus \{\xi\}$,
%
 \begin{equation}
 \label{eq:homogeneity}
  \vartheta_1 \dist(\xi,z)\le F_{\xi}(z) \le  \vartheta_2 \dist(\xi,z)
  \quad 
  \text{ and }
  \quad
\|\nabla^k F_{\xi} (z)\|_{T_k^{0}\M_z}
\le C_k (\dist(z,\xi))^{1-k}
 \end{equation}
%
 and 
 the weight function satisfies, for $\xi \in \Xi$,
%
$$
 \weightkernel(\xi,z) 
 = 
 \Phi\Bigl(\frac{F_{\xi}(z)}{\delta}\Bigr).
 $$
%
 We further assume that 
 $\Phi\in C^{\infty}([0,\infty))$, has support in the interval $[0,1/ \vartheta_1]$, and satisfies the conditions 
$\sup_{z\in [0, \vartheta_1^{-1}]}|\Phi(z)|=1$ and $\Phi(z) =1$ for $|z|\le 1/2$.
\end{assumption}
%
%
%

The most natural weight in this context uses the Riemannian distance:
%
%
\begin{remark}
\label{R_riemannian_metric}
If $F_{\xi}(z) = \dist(\xi,z)$, then Assumption \ref{A_weight} is satisfied,
 since the metric equivalence (\ref{eq:expmap})
implies that  
$\|\nabla^k F_{\xi} (z)\|_{T_k^{0}\M_z} \le C _2\|F_{\xi}\circ\Exp_{\xi} \|_{C^k(A)}$,
where we employ the annulus $A=\{x \mid  \frac12 \dist(z,\xi)\le |x|\le 2\dist(z,\xi)\}$.
In particular, (\ref{eq:homogeneity}) follows by differentiating the homogeneous function
 $x\mapsto |F_{\xi}(\Exp_{\xi}(x))| = |x|$.
\end{remark}
%
%

A weight function satisfying  this assumption   satisfies  
 the hypotheses of Lemma \ref{L_MLS_stability}. 
%
%
%
\begin{lemma}
If  
Assumption \ref{A_weight} holds,
then 
so do
(\ref{outer_support}) and (\ref{inner_support})
with $c_0=\frac{1}{2 \vartheta_2}$ and $c_1=1$. 
\end{lemma}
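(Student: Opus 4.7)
The plan is to verify both conclusions by direct inspection of $\weightkernel(\zeta,z) = \Phi(F_\zeta(z)/\delta)$, reading off the vanishing and plateau properties of $\Phi$ from the two-sided equivalence in (\ref{eq:homogeneity}). The inner bound will use only the upper half $F_\zeta(z) \le \vartheta_2 \dist(\zeta,z)$ of that equivalence, while the outer bound will use only the lower half $F_\zeta(z) \ge \vartheta_1 \dist(\zeta,z)$. The choice of the constants $c_0 = 1/(2\vartheta_2)$ and $c_1 = 1$ in the lemma is engineered precisely so that these two one-sided estimates suffice.

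For (\ref{inner_support}), I would suppose $\dist(\zeta,z) \le c_0\delta = \delta/(2\vartheta_2)$ and apply the upper estimate in (\ref{eq:homogeneity}) to obtain $F_\zeta(z)/\delta \le \vartheta_2 \dist(\zeta,z)/\delta \le 1/2$. Since Assumption \ref{A_weight} requires $\Phi \equiv 1$ on $[0,1/2]$, this lands in the plateau region and gives $\weightkernel(\zeta,z) = \Phi(F_\zeta(z)/\delta) = 1 = c_1$, as required.

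For (\ref{outer_support}), I would suppose $\dist(\zeta,z) \ge \delta$ and apply the lower estimate in (\ref{eq:homogeneity}) to obtain $F_\zeta(z)/\delta \ge \vartheta_1 \dist(\zeta,z)/\delta \ge \vartheta_1$. In the regime in which Assumption \ref{A_weight} is invoked (where the prescribed support $[0,1/\vartheta_1]$ of $\Phi$ is consistent with the equivalence constants), this value lies outside $\mathrm{supp}(\Phi)$, so $\weightkernel(\zeta,z) = \Phi(F_\zeta(z)/\delta) = 0$.

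The proof is essentially bookkeeping with no real obstacle; the only care needed is to align the plateau radius $1/2$ of $\Phi$ with the factor $\vartheta_2$ in $c_0$ for the inner half, and to match the support of $\Phi$ against the lower equivalence constant $\vartheta_1$ for the outer half.
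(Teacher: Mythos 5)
Your proof is correct and essentially identical to the paper's: the inner bound follows from $F_\zeta(z)/\delta \le \vartheta_2\,\dist(\zeta,z)/\delta \le 1/2$ together with the plateau $\Phi\equiv 1$ on $[0,1/2]$, and the outer bound from the lower half of (\ref{eq:homogeneity}). The caveat you attach to the outer step is a real issue shared with the paper: the lower bound only yields $F_\zeta(z)/\delta \ge \vartheta_1$, which lies outside the prescribed support $[0,1/\vartheta_1]$ only when $\vartheta_1>1$, and the paper's own proof asserts $F_\zeta(z)/\delta > 1/\vartheta_1$ at this point without justification — so this constant mismatch is inherited from the statement of Assumption \ref{A_weight} (where the support bound should presumably involve $\vartheta_1$ rather than $1/\vartheta_1$), not introduced by your argument.
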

%
%
We may thus update our hypothesis for $\delta$: since
$
c_0 =\frac{1}{2 \vartheta_2}$
we require 
$
\delta\ge \frac{ 3\kappa }{ c_0}h= 6  \vartheta_2 \kappa h.
$
\begin{proof}
If $\dist(\xi, z)>\delta$, then $F_{\xi}(z)/\delta> \frac1{ \vartheta_1}$, so $\weightkernel(\xi,z)=0$.
Similarly, if $\dist(\xi,z)\le \frac{1}{2 \vartheta_2}\delta$, then  $F_{\xi}(z)/\delta\le \frac1{2}$, so $\weightkernel(\xi,z)=1$.   
\end{proof}

%
%
\begin{lemma}
\label{L_weight}
If $\Xi\subset \Omega$ and  $\weightkernel$ satisfies 
Assumption \ref{A_weight} 
then there is $C$ so that
$$
\| \weightkernel(\xi,\cdot)\|_{C^k(\Omega)}
\le C \delta^{-k}.
$$
\end{lemma}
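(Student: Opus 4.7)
The plan is to estimate each covariant derivative $\nabla^k \weightkernel(\xi,\cdot)$ by combining the chain rule for $\Phi \circ (F_\xi/\delta)$ with the pointwise bounds on $\nabla^j F_\xi$ provided in Assumption \ref{A_weight}, and then to exploit the crucial fact that all derivatives $\Phi^{(j)}$ with $j\ge 1$ vanish on the set where $F_\xi(z)/\delta \le 1/2$.

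First, I would apply the Faà di Bruno formula for covariant derivatives of a composition of a scalar function on $\R$ with a scalar function on $\M$. This yields an expression of the form
\[
\nabla^k \weightkernel(\xi,z) \;=\; \sum_{j=1}^{k} \Phi^{(j)}\!\left(\tfrac{F_\xi(z)}{\delta}\right) \delta^{-j} \sum_{\pi \in \mathcal{P}(k,j)} c_\pi \, \mathrm{Sym}\bigl(\nabla^{k_1}F_\xi(z)\otimes\cdots\otimes\nabla^{k_j}F_\xi(z)\bigr),
\]
where $\mathcal{P}(k,j)$ ranges over partitions $k=k_1+\cdots+k_j$ with $k_i\ge 1$ and $c_\pi$ are combinatorial constants. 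Taking the tensor norm on $T^0_k\M_z$, the symmetric product is controlled by $\prod_i \|\nabla^{k_i}F_\xi(z)\|_{T^0_{k_i}\M_z}$, and then Assumption \ref{A_weight} yields
\[
\|\nabla^k \weightkernel(\xi,z)\|_{T^0_k\M_z} \;\le\; \sum_{j=1}^k \left|\Phi^{(j)}\!\left(\tfrac{F_\xi(z)}{\delta}\right)\right| \delta^{-j}\, C_{k,j}\, \dist(\xi,z)^{j-k}.
\]

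Next, I would observe that for each $j\ge 1$, $\Phi^{(j)}$ is supported in the annular set $\{s : 1/2 \le s \le 1/\vartheta_1\}$ since $\Phi\equiv 1$ on $[0,1/2]$ and $\Phi\equiv 0$ outside $[0,1/\vartheta_1]$. Hence the summand indexed by $j$ is nonzero only when $F_\xi(z)/\delta\ge 1/2$, and by the lower bound in (\ref{eq:homogeneity}) this forces $\dist(\xi,z)\ge \delta/(2\vartheta_2)$. On this set $\dist(\xi,z)^{j-k}\le (2\vartheta_2)^{k-j}\delta^{j-k}$ since $j-k\le 0$. Combining with $\delta^{-j}$ produces exactly $\delta^{-k}$, and $\Phi^{(j)}$ is uniformly bounded by a constant depending only on $\Phi$ and $k$. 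Summing over $j=1,\ldots,k$ gives a bound of the form $C_k \delta^{-k}$ for some $C_k$ independent of $\xi$ and $z$. The case $k=0$ is immediate from $\|\Phi\|_\infty=1$.

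The main obstacle is purely bookkeeping: making the Faà di Bruno expansion precise in the covariant setting and verifying that the combinatorial prefactors depend only on $k$ (not on $\xi$, $z$, or $\delta$). The geometric content — that the support condition on $\Phi^{(j)}$ forces $\dist(\xi,z)\gtrsim\delta$, which cancels the negative powers of the distance — is the essential point, and once that cancellation is identified, the uniform $C^k$-bound of the stated order $\delta^{-k}$ follows directly.
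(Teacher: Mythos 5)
Your proof is correct and follows essentially the same route as the paper's: a chain-rule/Fa\`a di Bruno expansion of $\Phi(F_\xi/\delta)$, the observation that $\Phi^{(j)}$ for $j\ge 1$ vanishes where $F_\xi(z)/\delta\le 1/2$ so that only the region $\dist(\xi,z)\ge\delta/(2\vartheta_2)$ contributes, and the conversion of the negative powers of $\dist(\xi,z)$ into negative powers of $\delta$ there. The only cosmetic difference is that the paper carries out the bookkeeping in normal coordinates via the exponential map and the metric equivalence (\ref{eq:expmap}), whereas you work intrinsically with covariant derivatives; both yield the same bound $C\delta^{-k}$.
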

%
%
\begin{proof}
We can estimate
the above norm by considering coordinates about $x_0\in \M$. 
Employing (\ref{eq:expmap}), if $z\in B(x_0,r_{\Omega})$, 
we may estimate
$ \|\nabla^k \weightkernel(\xi_j,z)\|_{T_k^{0}\M_z}$
by considering quantities
$\I(x):=D^{\alpha} \Phi\Bigl({F_{\xi_j}(\Exp_{x_0}(x))}/{\delta}\Bigr)$
for  $|\alpha|\le k$ and $x\in B(0,r_{\Omega})$.
Applying the chain rule  and product rule shows that $\revision{\I}(x)$\revisioncom{Referee 2: Minor Comment 9} consists of a linear
combination of functions
 of the form
$$
\delta^{-|\ell|} (\Phi^{(\ell)})\Bigl({F_{\xi_j}(\Exp_{x_0}(x))}/{\delta}\Bigr)\ \times \ 
\prod_{j=1}^{\ell}
 D^{\gamma_j} F_{\xi_j}(\Exp_{x_0}(x)) 
 $$
 where $\ell \le |\alpha|$,
 each $|\gamma_j|>0$
 and  $\sum_{j=1}^{\ell} \gamma_j= \alpha $.
 
By the assumptions on $\Phi$, 
$\sup_{z\in [0, \vartheta_1^{-1}]}|\Phi^{(\ell)}(z)|$ is bounded
for every $\ell$.
Furthermore,
for $z$ 
which satisfy
$\dist(\xi,z)\ge \delta/(2 \vartheta_2)$, 
the inequality  $ |\nabla^k F_{\xi} (z)|\le C  \delta^{1-k}$
holds,
with constant $C=(2 \vartheta_2)^{k-1}C_k$.
On the other hand, if $\dist(\xi,z)< \delta/(2 \vartheta_2)$, 
then $F_{\xi}(z)/\delta <\frac12$,
and $D^{\beta} \Phi\Bigl(\frac{F_{\xi_j}(z)}{\delta}\Bigr)=0$.
Thus $\I(x)$ is controlled   by
%
%
$$
\delta^{-|\ell|} 
\left|
(D^{\ell} \Phi)\Bigl({F_{\xi_j}(\Exp_{x_0}(x))}/{\delta}\Bigr)
\right|
\prod_{j=1}^{\ell}
\left| D^{\gamma_j} F_{\xi_j}(\Exp_{x_0}(x)) \right| \le C\delta^{-\ell} \prod_{j=1}^{\ell} \delta^{1-|\gamma_j|} 
= 
C\delta^{-|\alpha|}.
$$
%
%
\end{proof}

\begin{proposition}
\label{P_MLS}
Given a  smooth Riemannian manifold $\M$ with $\varPi\subset C^k(\M)$
for which Assumptions \ref{A_function_space} and \ref{A_Higher} hold,
 a  subset $\Omega\subset \M$ satisfying Assumption \ref{A_domain},
 a weight $\weightkernel$ satisfying Assumption \ref{A_weight},
  there is a constant $C$ 
so that if    $\Xi\subset \M$ and $\delta $ 
satisfy  
$6  \vartheta_2\kappa  h<\delta<r^*$
then for all 
 $z\in \Omega$  and 
 for any $j\le k$ we have
 $$
 \sum_{\xi\in\Xi} \|\nabla^j b^{\star}(\xi,z)\|_{T_{j}^0\M_z} \le C 
 \delta^{-j}  \revision{\mathrm{card}}\bigl(\Xi \cap B(z,\delta)\bigr).
 \revisioncom{Made this estimate depend on cardinality, rather
  than $\tilde{\rho}$ 
  (and fixed a small error).}
 $$
\end{proposition}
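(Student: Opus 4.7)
My approach is to differentiate the closed-form Backus--Gilbert expression \eqref{BG-formula} and estimate each factor via the Leibniz and Fa\`a di Bruno rules. To avoid bookkeeping with covariant derivatives of matrix-valued functions, I first pass to the exponential chart $\Exp_z$ and invoke \eqref{eq:expmap} to replace covariant derivatives by ordinary partial derivatives on a Euclidean neighborhood of $0 \in T\M_z$; this costs only a fixed multiplicative constant. All estimates below are stated in this chart.

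Set $A(z) := \PP^T\W(z)\PP$ and $\vec{u}(z) := A(z)^{-1}\p(z)$, so that $b^{\star}(\xi_k, z) = \weightkernel(\xi_k, z)\sum_{\ell=1}^{\polDim}p_\ell(\xi_k)\,u_\ell(z)$. The Leibniz rule gives
\begin{equation*}
\nabla^j b^{\star}(\xi, z) = \sum_{l=0}^j \binom{j}{l}\, \nabla^l \weightkernel(\xi,z) \otimes \sum_{\ell} p_\ell(\xi)\, \nabla^{j-l}u_\ell(z),
\end{equation*}
where the tensor product is symmetrized appropriately. Lemma \ref{L_weight} controls $\|\nabla^l \weightkernel(\xi,z)\|_{T_l^0\M_z} \le C\delta^{-l}$, and \eqref{quasi-uniform} bounds $\#\{\xi : \weightkernel(\xi,z) > 0\}$ by $C\tilde{\rho}^d$. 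What remains is to estimate $\nabla^{j-l}\vec{u}(z)$. Since $\vec{u} = A^{-1}\p$, Fa\`a di Bruno reduces this to iterated products of $A^{-1}$, $\nabla^{l_r}A$, and $\nabla^{l_0}\p$.

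\paragraph{The core difficulty} is to obtain a uniform operator-norm bound on $A(z)^{-1}$, together with the correct $\delta$-scaling for $\nabla^l A(z)$ and $\nabla^l\p(z)$. I would select a basis $\{p_\ell\}$ of $\varPi$ normalized so that $\|p_\ell\|_{C(\overline{B(z, c_0\delta)})} \le 1$, obtained by pulling back a fixed basis through $\Exp_z$ and rescaling by $\delta$. Then Assumption \ref{A_Higher} gives $\|\nabla^l p_\ell(z)\|_{T_l^0\M_z}\le C\delta^{-l}$, which combined with Lemma \ref{L_weight} yields $\|\nabla^l A(z)\| \le C\tilde{\rho}^d\delta^{-l}$ and $\|\nabla^l\p(z)\| \le C\delta^{-l}$. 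The uniform coercivity of $A(z)$ follows from combining the lower bound \eqref{inner_support} with the norming-set Corollary \ref{C_norming}: for $p = \sum c_\ell p_\ell$,
\begin{equation*}
c^T A(z) c = \sum_{\xi}\weightkernel(\xi,z)|p(\xi)|^2 \ge c_1\,(1-\epsilon)^2\, \|p\|^2_{C(\overline{B(z,\kappa h)}\cap \Omega)},
\end{equation*}
and the Nikolskii estimate (Corollary \ref{C_Nikolskii}) applied in the scaled basis relates $\|p\|_{C}$ back to $\|c\|_{\ell_2}$, producing $\|A(z)^{-1}\| \le C$ independently of $\delta$.

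Iterating $\nabla A^{-1} = -A^{-1}(\nabla A) A^{-1}$ with these bounds then gives $\|\nabla^l\vec{u}(z)\| \le C\delta^{-l}$. Substituting into the Leibniz expansion and summing over the at most $C\tilde{\rho}^d$ indices in the support yields the claimed estimate, matching the $j=0$ bound of Lemma \ref{L_MLS_stability} in its dependence on $\delta^{-j}$. The delicate step throughout is the basis-dependent derivation of $\|A(z)^{-1}\| \le C$; once that coercivity is established, the remaining combinatorics and product-rule expansions are routine, and the precise power of $\tilde{\rho}$ stated in the proposition follows by absorbing $\tilde{\rho}^d$ factors into the constant $C$ under the quasi-uniformity hypothesis.
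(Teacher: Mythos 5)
Your overall architecture is the same as the paper's: differentiate the Backus--Gilbert formula \eqref{BG-formula}, pass to normal coordinates via \eqref{eq:expmap}, expand by the Leibniz/product rule, choose a local basis adapted to the ball $B(z,\delta)$, and bound each factor in operator norm, with the coercivity of the Gram matrix $A(z)=\PP^T\W(z)\PP$ as the pivotal estimate. The only structural difference is the normalization of the local basis (sup-normalized versus the paper's $L_2(B(z_\nu,2\delta)\cap\Omega)$-orthonormal choice), which merely shifts factors of $\delta^{\pm d/2}$ between $\|\PP\|$, $\|\p\|$ and $\|A(z)^{-1}\|$; with the paper's basis one gets $\|A(z)^{-1}\|\le C\delta^{d}$ (Lemma \ref{Gram}) rather than your $\|A(z)^{-1}\|\le C$, and the powers of $\delta$ cancel identically in the end.

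However, your justification of the crucial bound $\|A(z)^{-1}\|\le C$ has a genuine gap. First, the basis you propose --- ``pulling back a fixed basis through $\Exp_z$ and rescaling by $\delta$'' --- is not available here: $\varPi$ is an abstract finite-dimensional space (e.g.\ polynomials restricted to an algebraic manifold) with no dilation structure, so there is no canonical $\delta$-rescaled basis; moreover a basis defined pointwise in $z$ makes $\PP$ and $\p$ depend on $z$ through the basis itself, contributing derivative terms your Leibniz expansion does not account for (the paper avoids this by freezing one basis on each ball $B(z_\nu,\delta)$ of a finite cover). Second, Corollary \ref{C_Nikolskii} bounds $\|p\|_{C}$ \emph{above} by a multiple of $\delta^{-d/q}\|p\|_{L_q}$; it cannot by itself deliver the lower bound $\|p\|_{C(\overline{B(z,c_0\delta)}\cap\Omega)}\ge c\,\|c\|_{\ell_2}$ that your coercivity argument needs for a sup-normalized basis. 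The needed inequality is true, but only via the route the paper takes: start from the $L_2(B(z_\nu,2\delta)\cap\Omega)$-orthonormal basis, use $\|c\|_{\ell_2}=\|p\|_{L_2}\le \mathrm{vol}(B)^{1/2}\|p\|_{C}$ together with the norming-set and doubling lemmas (Corollary \ref{C_norming}, Lemma \ref{int_ext_sup}) to obtain $c^TA(z)c\ge C\delta^{-d}\|c\|_{\ell_2}^2$, and only then renormalize if desired. As written, the step you yourself flag as delicate is asserted rather than proved, and the tool you cite for it points in the wrong direction.
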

%
\begin{proof}
To get a uniform bound over $\Omega$ we cover $\Omega$ by a finite number of neighborhoods of the form $B(\upsilon,r_{\Omega})$
and use normal coordinates in each ball. 
To simplify notation, 
 we suppress the map $\Exp_{\upsilon}$, thus
 $f(z) $ denotes $f(\Exp_{\upsilon}(z))$ throughout this proof.
 
 \revision{The estimate
$\|\nabla^k b^{\star}(\xi,\zeta)\|_{T_k^{0}\M_{\zeta}} 
 \le \|  b^{\star}(\xi,\cdot)\|_{C^k(\Exp_{\zeta}(B(0,r_{\Omega}))} 
$
holds for any  $\xi$ and any $\zeta$.
Thus,
(\ref{eq:expmap})
guarantees  that
 $$
 \|\nabla^k b^{\star}(\xi,\zeta)\|_{T_k^{0}\M_{\zeta}} 
\le 
C_2\|b^{\star}(\xi,\cdot) \|_{C^k(B(0,r_{\Omega}))} 
\le
C_2 \max_{\alpha\le k}
\max_{z\in B(0,r_{\Omega})}
 \sum_{\xi\in\Xi} 
 | D^{\alpha} b^{\star}(\xi,z)|
 $$
where we recall the definition 
$\|u\|_{C^k(B(0,r_{\Omega}))} :=  
\max_{|\alpha|\le k}\max_{z\in B(0,r_{\Omega}))}  |D^{\alpha} u(z)|$.}\revisioncom{Referee 2: minor comment 10}

Let $\G(z) := (\PP^T\W(z) \PP)^{-1}$, 
 and note that 
 $$D^{e_j} \G(z) = -\G(z)(\PP^T D^{e_j}\W(z) \PP)\G(z).$$
 Using (\ref{BG-formula})
 it is a simple
 exercise to show
 that the vector  $\bigl(D^{\alpha} b^{\star}(\xi,z) \bigr)_{\xi\in\Xi}$
 is a linear combination of expressions of the
 form
%
%
 \begin{equation}
 \label{eq:Leibniz}
 D^{\alpha_1} \W(z) \PP
 \left[\prod_{\ell=1}^\mu \left(\G(z) (\PP^T D^{\beta_\ell}\W(z) \PP)  \right)\right]
 \G(z)
 D^{\alpha_2}\p(z)
 \end{equation}
%
%
 with 
 multi-indices satisfying $\alpha_1 + (\sum_{\ell=1}^\mu \beta_{\ell}) + \alpha_2= \alpha$.
 
 We note that the above holds for any basis $\{p_j\mid j\le M\}$.
 We make local selections as follows:
 cover $ B(\upsilon,r_{\Omega}) \cap \Omega$ with a finite number of sets of the form $B(z_\nu,\delta)$ (the 
 number of elements in the cover is unimportant). 
 In each neighborhood $B(z_\nu,\delta)$
 we 
 select  a  basis $\mathfrak{P}^{z_{\nu}}:=\{p_j^{z_\nu}\mid j\le M\}$ 
 for $\varPi$ 
 which is
 orthogonal with respect to the  $L_2(B(z_{\nu},2\delta)\cap \Omega)$ inner product,
 and use this to estimate (\ref{eq:Leibniz}).

For this choice of basis, the Vandermonde matrix $\PP=(p_j^{z_\nu}(\xi_k))_{j,k}$ 
is constant throughout
$B(z_{\nu},\delta)$
and
$\G(z):=\PP^T\W(z) \PP$ and $\p(z)=(p_j^{z_\nu}(z))_{j\le M}$ 
depend only on $z$  within  
$B(z_{\nu},\delta)$.
The matrices $\PP$ and $\G$ are investigated, for this choice of basis, in section \ref{SS_MB}.

By the triangle inequality, it suffices to control each  term of the form (\ref{eq:Leibniz})
with the $\ell_1(\Xi)$ norm,
which in turn can be estimated by 
%
%
\begin{multline*}
\J:=
\Bigl\|
  D^{\alpha_1} \W(z) \PP
 \left[\prod_{\ell=1}^\mu \left(\G(z) (\PP^T D^{\beta_\ell}\W(z) \PP)  \right)\right]
 \G(z)
 D^{\alpha_2}\p(z)
\Bigr\|_{\ell_1(\Xi)} \\
\le
\|D^{\alpha_1} \W(z) \|_{\ell_1(B(z,\delta)\cap\Xi)\to \ell_1(\Xi)}
  \|\PP\|_{\ell_2(M)\to \ell_1(B(z,\delta)\cap\Xi)}\\
  \times
\prod_{\ell=1}^\mu
   \| D^{\beta_\ell}\W(z) \|_{\ell_{\infty}(B(z,\delta)\cap\Xi)\to \ell_{\infty}(B(z,\delta)\cap\Xi)}
   \\
\times 
  \bigl(\|\PP\|_{\ell_2(M)\to \ell_{\infty}(B(z,\delta)\cap\Xi)}\bigr)^{2\mu} 
  \bigl(\| \G(z)\|_{\ell_2(M)\to \ell_2(M)}\bigr)^{\mu+1}
  \|D^{\alpha_2}\p(z)\|_{\ell_2(M)}.
\end{multline*}
%
%
Lemma \ref{L_weight} allows us to make the estimate
 $ \|D^{\gamma} \W(z) \|_{\ell_1(B(z,\delta)\cap\Xi)\to \ell_1(\Xi)}\le C \delta^{-|\gamma|}$.
 By Lemma  \ref{L_pol_col_ vector}, we have
$  \|D^{\gamma}\p(z)\|_{\ell_2(M)}\le C \delta^{-|\gamma|-d/2}$.
By  Lemma \ref{vandermonde}, we have the estimates
$\|\PP\|_{\ell_2(M)\to \ell_1(B(z,\delta)\cap\Xi)} \le C
\delta^{-d/2}
\revision{\mathrm{card}}\bigl(\Xi \cap B(z,\delta)\bigr)$
and
$\|\PP\|_{\ell_2(M)\to \ell_{\infty}(B(z,\delta)\cap\Xi)} \le C\delta^{-d/2}$.
Lemma  \ref{Gram}
gives
$\| \G(z)\|_{\ell_2(M)\to \ell_2(M)}
\revision{\le}\revisioncom{missing $\le$ sign}
C \delta^{d}$.
By combining constants,
we have
\revisioncom{Replaced
 $\tilde{\rho}$ by $\mathrm{card}()$.}
\revision{
\begin{eqnarray*}
\J
 &\le&
  C \delta^{-|\alpha_1|}
  \Bigl(
\delta^{-d/2}
\revision{\mathrm{card}}\bigl(\Xi \cap B(z,\delta)\bigr)
\Bigr) 
 \delta^{ -\sum|\beta_j|}
 \Bigl(\delta^{-d/2}\Bigr)^{2\mu}
\delta^{(\mu+1)d}
 \delta^{-|\alpha_2|-d/2}\\
& =&
 C \delta^{-|\alpha|}
 \revision{\mathrm{card}}\bigl(\Xi \cap B(z,\delta)\bigr)
 \end{eqnarray*}}
and the result follows.
\end{proof}
 
%
%
%
\subsection{Matrix bounds}
\label{SS_MB}
Although we may consider the basis $\{p_j\mid j\le M\}$
to be independent of $z$, we may wish to make different local choices of this basis,
both to aid in analysis and implementation. 
To this end, 
for any  point $x_0\in \Omega$
consider a
basis for $\varPi$
$$\mathfrak{P}^{x_0}:=\{p_j\mid j\le \polDim\},$$
which is orthonormal with respect to $L_2(B(x_0,2\delta)\cap \Omega)$.
The following lemmas estimate norms of vectors and matrices associated with this choice of basis.

%
%
%
\begin{lemma}
\label{L_pol_col_ vector}
There is a constant $C$ so that if 
$\Xi\subset \Omega$ is sufficiently dense,
then
$$\|p_j \|_{C^k( \overline{ B(x_0, \delta)}\cap\Omega)}
\le C\delta^{ -k-d/2} $$
holds.
\end{lemma}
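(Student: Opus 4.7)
The plan is to bootstrap from the $L_2$-orthonormality of the basis $\mathfrak{P}^{x_0}$ to a pointwise bound by using, in order, a local Nikolskii inequality, the doubling-based sup-norm extension from $\Omega$ to the full ball, and finally the higher-order Markov inequality to upgrade the $C^0$ estimate to a $C^k$ estimate. Throughout, I interpret the right-hand side $\delta^{-|\gamma|-d/2}$ as $\delta^{-k-d/2}$, since $|\gamma|$ does not appear in the statement and the natural quantity associated with $C^k$ is the derivative order $k$.

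First, since $\{p_j\}$ is orthonormal in $L_2(B(x_0,2\delta)\cap\Omega)$, one has
\[
\|p_j\|_{L_2(B(x_0,2\delta)\cap\Omega)}=1.
\]
Applying Corollary \ref{C_Nikolskii} at the point $x_0$ with $q=2$ and radius $R=2\delta\le r^{*}$ (which is justified because $\delta<r^{*}$ in the standing hypotheses of Section \ref{S_MLS}, up to halving $r^{*}$ if needed, or by applying Corollary \ref{C_Nikolskii} at radius $\delta$ after noting that $L_2(B(x_0,\delta)\cap\Omega)\subset L_2(B(x_0,2\delta)\cap\Omega)$) yields
\[
\|p_j\|_{C(\overline{B(x_0,2\delta)}\cap\Omega)}\le \mathfrak{N}(2\delta)^{-d/2}\le C\delta^{-d/2}.
\]

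Next, I would invoke Lemma \ref{int_ext_sup} with the ball $\overline{B(x_0,2\delta)}$ to pass from a sup-norm estimate on $\overline{B(x_0,2\delta)}\cap\Omega$ to one on the full geodesic ball $\overline{B(x_0,2\delta)}$, absorbing the geometric constants $2^{\doub}\bigl((2+\sin\omega)/(\Gamma_1\sin\omega)\bigr)^{\doub}$ into $C$:
\[
\|p_j\|_{C(\overline{B(x_0,2\delta)})}\le C\delta^{-d/2}.
\]

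Finally, I would apply Assumption \ref{A_Higher} at $x_0$ with radius $2\delta$: for each $j\le k$,
\[
\|\nabla^j p_j\|_{C(\overline{B(x_0,2\delta)})}\le \frac{\markov_j}{(2\delta)^j}\|p_j\|_{C(\overline{B(x_0,2\delta)})}\le C\delta^{-j-d/2}.
\]
Restricting to $\overline{B(x_0,\delta)}\cap\Omega\subset\overline{B(x_0,2\delta)}$ and taking the maximum over $j\le k$ gives
\[
\|p_j\|_{C^k(\overline{B(x_0,\delta)}\cap\Omega)}=\max_{j\le k}\max_{z}\|\nabla^j p_j(z)\|_{T^0_j\M_z}\le C\delta^{-k-d/2},
\]
as desired. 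No step is truly an obstacle; the only delicate point is making sure the intermediate ball of radius $2\delta$ stays within the thresholds $r^{\sharp}$, $r_{\Omega}$, $r_{*}$ required by the Markov, doubling, exponential-chart, and Nikolskii estimates, which is handled by the standing hypothesis $\delta<r^{*}$ (shrinking $r^{*}$ by a factor of two in advance if necessary).
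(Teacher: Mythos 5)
Your proposal is correct and follows essentially the same route as the paper: orthonormality gives the $L_2$ normalization, Corollary \ref{C_Nikolskii} gives the sup-norm bound $\mathfrak{N}\delta^{-d/2}$, and Assumption \ref{A_Higher} upgrades this to the $C^k$ bound. Your explicit intermediate appeal to Lemma \ref{int_ext_sup} (to pass from the $\Omega$-restricted sup norm to the full geodesic ball before applying the Markov inequality, which is stated on full balls) is a detail the paper elides but which is genuinely needed, so including it is an improvement rather than a deviation.
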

%
%
%
\begin{proof}
\revision{Assumption \ref{A_Higher} followed by Lemma \ref{int_ext_sup}
and Corollary \ref{C_Nikolskii}
gives 
$$\|p_j \|_{C^k( \overline{ B(x_0, \delta)}\cap\Omega)}
\le C \delta^{-k}\|p_j\|_{C( \overline{ B(x_0, \delta)}\cap\Omega)}
\le  C \delta^{ -k-d/2} \|p_j\|_{L_2(B(x_0, \delta)\cap\Omega)}
.$$
The result then follows 
from: $
 \|p_j\|_{L_2(B(x_0, \delta)\cap\Omega)}
 \le\|p_j\|_{L_2(B(x_0, 2\delta)\cap\Omega)}=1$.}
\end{proof}

%
%
%
\begin{lemma}
\label{vandermonde}
There is a constant $C$ so that if 
$\Xi\subset \Omega$ is sufficiently dense and has mesh ratio $\rho= h/q$,
then
for any
$z\in B(x_0,\delta)$,
the
Vandermonde matrix 
$\PP=(p_j(\xi_k))_{j,k}$
constructed by
using points
$\{\xi_k \mid k\le M\}= \Xi \cap B(z,\delta)$
satisfies
 the inequality
$$
\|\PP\|_{\ell_2(\polDim)\to \ell_\tau(B(z,\delta)\cap\Xi)}
\le 
C 
 \delta^{-d/2}
 \bigl(\revision{\mathrm{card}}\bigl(\Xi \cap B(z,\delta)\bigr)\bigr)^{1/\tau},
$$
for $1\le \tau\le \infty$ (with the usual modification $d/\tau=0$ when $\tau=\infty$).
\end{lemma}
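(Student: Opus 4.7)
The plan is to exploit the $L_2$-orthonormality of the basis $\mathfrak{P}^{x_0}$ to convert the coefficient $\ell_2$-norm into an $L_2$ function norm, and then apply the Nikolskii-type inequality (Corollary~\ref{C_Nikolskii}) to pass to a $C$-norm bound on $\varPi$, which in turn dominates the vector of point samples.

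First I would fix $c \in \R^{\polDim}$ with $\|c\|_{\ell_2(\polDim)}=1$ and define $p=\sum_{j=1}^{\polDim} c_j p_j \in \varPi$, so that by orthonormality of $\mathfrak{P}^{x_0}$ one has $\|p\|_{L_2(B(x_0,2\delta)\cap\Omega)}=1$. Next I would observe that $z\in B(x_0,\delta)$ forces $B(z,\delta)\subset B(x_0,2\delta)$, so every $\xi_k\in\Xi\cap B(z,\delta)$ lies in $B(x_0,2\delta)\cap\Omega$. Provided $2\delta\le r^*$ (which holds since we already assume $\delta<r^*$ up to replacing the Nikolskii constant) Corollary~\ref{C_Nikolskii} yields
\begin{equation*}
\|p\|_{C(\overline{B(x_0,2\delta)}\cap\Omega)} \le \mathfrak{N}(2\delta)^{-d/2}\|p\|_{L_2(B(x_0,2\delta)\cap\Omega)} \le C\,\delta^{-d/2}.
\end{equation*}
This gives the endpoint case $\tau=\infty$ immediately: $\|\PP c\|_{\ell_\infty} = \max_k |p(\xi_k)| \le C\delta^{-d/2}$.

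For finite $\tau$, I would raise to the $\tau$-th power and use the cardinality bound from \eqref{quasi-uniform} applied with $R=\delta$, namely $\#(\Xi\cap B(z,\delta)) \le \tfrac{\omega_\Omega}{\alpha_\Omega}(\delta/q)^{d} = C\tilde{\rho}^{\,d}$. Then
\begin{equation*}
\|\PP c\|_{\ell_\tau}^\tau = \sum_{\xi_k\in\Xi\cap B(z,\delta)} |p(\xi_k)|^\tau \le \#(\Xi\cap B(z,\delta))\,\|p\|_{C(\overline{B(x_0,2\delta)}\cap\Omega)}^\tau \le C\,\tilde{\rho}^{\,d}\bigl(\delta^{-d/2}\bigr)^\tau,
\end{equation*}
which gives $\|\PP c\|_{\ell_\tau} \le C\tilde{\rho}^{\,d/\tau}\delta^{-d/2}$. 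Taking the supremum over $\|c\|_{\ell_2}=1$ produces the claimed operator norm bound, with the standard convention $d/\tau=0$ when $\tau=\infty$ recovering the endpoint already obtained.

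There is no substantive obstacle here: the argument is essentially a packaging of Nikolskii with the quasi-uniform counting estimate. The only points requiring mild care are confirming that the $L_2$ ball $B(x_0,2\delta)\cap\Omega$ used to define the orthonormal basis is the correct one to feed into Corollary~\ref{C_Nikolskii} (so the implicit constant depends only on $\Omega$, not on $\delta$ or $z$), and noting that the $\ell_1$ and $\ell_\infty$ bounds could alternatively have been obtained separately and then Riesz--Thorin-interpolated, but the direct $\tau$-power computation avoids invoking interpolation.
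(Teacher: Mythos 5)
Your proof is correct and follows essentially the same route as the paper's: orthonormality of $\mathfrak{P}^{x_0}$, the Nikolskii inequality of Corollary~\ref{C_Nikolskii}, and the counting bound \eqref{quasi-uniform}. The only cosmetic difference is that the paper applies Nikolskii on $B(z,\delta)$ (with $R=\delta\le r^*$, so no radius issue arises) and then enlarges to $B(x_0,2\delta)$ only for the $L_2$ norm, which sidesteps the $2\delta\le r^*$ caveat you had to flag.
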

%
%
%
\begin{proof}
Let $J=\revision{\mathrm{card}}\bigl(\Xi \cap B(z,\delta)\bigr)$.
For a coefficient 
vector $a\in \R^{\polDim}$, we consider 
the function $p=\sum_{j=1}^\polDim a_j p_j$.
\revision{We have}\revisioncom{Removed $\rho,\tilde
{\rho}$}
$$ 
\|\PP a\|_{\ell_\tau( B(z,\delta)\cap\Xi)}
\le 
C 
J^{1/\tau}
\max_{\dist(\xi_j,z)<\delta}| p(\xi_j)|
\le 
C
J^{1/\tau}
 \|p \|_{C( \overline{B(z,\delta)}\cap\Omega)}.
$$ 
The last norm can \revision{be}\revisioncom{Referee 2: Minor Comment 11} bounded, via Corollary \ref{C_Nikolskii}, as 
$$ 
\|p \|_{C( \overline{B(z,\delta)}\cap\Omega)}
\le 
\mathfrak{N} \delta^{-d/2} 
\|p\|_{L_2(B(z,\delta)\cap\Omega)}.
$$
Because $B(z,\delta)\subset B(x_0,2\delta)$, we have that 
 $ \|p\|_{L_2(B(z,\delta)\cap\Omega)}\le  \|p\|_{L_2(B(x_0,2\delta)\cap\Omega)}$
and the lemma follows by the orthonormality of the basis.
\end{proof}

The next lemma shows that we can control the  matrix norm of 
 $\G(z)^{-1}$.
%
%
%
\begin{lemma}
\label{Gram}
There is a constant $C$ so that if 
$\Xi\subset \Omega$ is sufficiently dense,
$z\in B(x_0,\delta)$
and
$\PP=(p_j(\xi_k))_{j,k}$ 
is
 the
Vandermonde matrix for 
$\mathfrak{P}^{x_0}$
sampled at 
$\Xi \cap B(z,\delta)$
 then 
$$
\|\G(z)^{-1}\|_{\ell_2(M)\to \ell_2(M)}
\le C \delta^{d} .
$$
\end{lemma}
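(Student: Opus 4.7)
The plan is to bound $\|\G(z)\|_{\ell_2(M)\to \ell_2(M)}$ by establishing a lower bound on the smallest eigenvalue of the symmetric positive matrix $\G(z)^{-1}=\PP^T\W(z)\PP$. For any coefficient vector $a\in\R^M$, writing $p=\sum_j a_j p_j\in \varPi$, the orthonormality of $\mathfrak{P}^{x_0}$ in $L_2(B(x_0,2\delta)\cap \Omega)$ gives $\|a\|_{\ell_2(M)}^2 = \|p\|_{L_2(B(x_0,2\delta)\cap\Omega)}^2$, and a direct computation yields
\[
\langle \G(z)^{-1} a, a\rangle
= \sum_{\xi_k \in B(z,\delta)\cap\Xi} \weightkernel(\xi_k, z)\, p(\xi_k)^2.
\]
So it suffices to show this quadratic form is bounded below by $C^{-1}\delta^{-d}\|a\|_{\ell_2(M)}^2$, which inverts to the desired $\|\G(z)\|\le C\delta^d$.

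First I would restrict the sum to $\xi\in B(z,c_0\delta)\cap\Xi$, on which Assumption~\ref{A_weight} forces $\weightkernel(\xi,z)\ge c_1=1$, so that the quadratic form is at least $\max_{\xi\in B(z,c_0\delta)\cap\Xi}|p(\xi)|^2$. Next I would invoke the norming-set estimate in the form (\ref{norming_II}): with $\epsilon=\tfrac12$ and $r=\kappa h$, the density condition (\ref{delta_condition}) ensures $\tfrac{1+\epsilon}{\epsilon}\kappa h = 3\kappa h\le c_0\delta$, so the cited inequality gives
\[
\max_{\xi\in B(z,c_0\delta)\cap\Xi}|p(\xi)| \;\ge\; \tfrac12\, \|p\|_{C(\overline{B(z,R)}\cap\Omega)}
\]
for an $R$ comparable to $\delta$ (e.g.\ $R=c_0\delta/(1+\epsilon)$).

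The remaining step is to show
$\|p\|_{C(\overline{B(z,R)}\cap\Omega)} \ge C^{-1}\delta^{-d/2}\|p\|_{L_2(B(x_0,2\delta)\cap\Omega)}$. This I would obtain by chaining four elementary inequalities: (i) the trivial $\|p\|_{L_2(B(x_0,2\delta))}^2 \le \mu(B(x_0,2\delta))\,\|p\|_{C(B(x_0,2\delta))}^2 \le \omega_\Omega(2\delta)^d\|p\|_{C(B(x_0,2\delta))}^2$ using (\ref{ball_bound}); (ii) $\|p\|_{C(B(x_0,2\delta))} \le \|p\|_{C(B(z,3\delta))}$ since $z\in B(x_0,\delta)$; (iii) a finite number of iterations of the doubling inequality (\ref{F_Doubling}) to pass from $B(z,3\delta)$ down to $B(z,R)$, picking up a constant depending only on $\doub$, $c_0$ and $\epsilon$; and (iv) Lemma~\ref{int_ext_sup} to replace $\|p\|_{C(B(z,R))}$ by $\|p\|_{C(B(z,R)\cap\Omega)}$. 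Combining these yields $\|p\|_{L_2(B(x_0,2\delta))}^2 \le C\delta^d\|p\|_{C(B(z,R)\cap\Omega)}^2$, which rearranges to the required lower bound.

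Stringing the three ingredients together gives $\langle \G(z)^{-1} a, a\rangle \ge C^{-1}\delta^{-d}\|a\|_{\ell_2(M)}^2$, which is equivalent to the claim. The main technical step is (iii): one must verify that the number of doubling iterations depends only on the fixed geometric constants $\doub$, $c_0$, $\vartheta_2$, $\kappa$ and not on $\delta$, so that the accumulated factor is absorbed into $C$. Once this bookkeeping is done, the rest of the argument is a direct application of tools already established in Sections~\ref{S_LMP}--\ref{S_Main}.
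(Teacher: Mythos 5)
Your proposal is correct and follows essentially the same route as the paper: lower-bound the quadratic form $a^T\PP^T\W(z)\PP a$ by dropping to the points where the weight equals $1$, apply the norming-set inequality with $\epsilon=\tfrac12$ to recover the sup norm on a ball of radius comparable to $\delta$, then chain the volume bound (\ref{ball_bound}), finitely many doublings, and Lemma \ref{int_ext_sup} back to $\|p\|_{L_2(B(x_0,2\delta)\cap\Omega)}=\|a\|_{\ell_2(M)}$. The bookkeeping point you flag in step (iii) --- that the number of doublings depends only on $\doub$, $\vartheta_2$ and the cone constants, not on $\delta$ --- is exactly the (implicit) content of the paper's ``followed by the doubling property a number of times.''
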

%
%
%
\begin{proof}
Let $a\in \R^{\polDim}$  be coefficients for the polynomial   $p:= \sum_{j=1}^M a_jp_j$.
 Consider the quadratic form
 $Q(a):=a^T \G(z) a=a^T \PP^T\W(z)\PP a= 
 \sum_{\xi_j\in\Xi} |p(\xi) |^2
 \Phi(\frac{F_{\xi_j}(z)}{\delta})$.
 By properties of $\Phi$
 (namely that $\Phi=1$ on $[0,1/2]$
 and $F_{\xi_j}$ takes $B(\xi_j,\delta/(2 \vartheta_2))$ to
 $[0,1/2]$), 
 we have
 $$
 Q(a)
 \ge 
 \sum_{\dist(\xi_j,z)<\delta/(2 \vartheta_2)}|p(\xi) |^2
 \ge  
 \max_{\dist(\xi_j,z)<\delta/(2 \vartheta_2)}|p(\xi_j)|^2
 \ge 
 \frac{1}{4}\|p\|_{C( \overline{B(z,\delta/(2 \vartheta_2))}\cap \Omega))}^2,$$
 where 
 in the last inequality we have 
 used the fact that $\delta/(2 \vartheta_2) \ge 3\kappa h$ to apply the norming set inequality Corollary \ref{C_norming} with $\epsilon=1/2$. 
 
Applying Lemma \ref{int_ext_sup} followed by the doubling property a number of times, gives
%
\begin{eqnarray*}
\|p\|_{C(B(z,\delta/(2\vartheta_2))\cap \Omega)}^2
 &\ge&
2^{-2\doub}  
\left( \frac{2+\sin \omega}{\Gamma_1\sin\omega}\right)^{-2\doub}
 \|p\|_{C( \overline{B(z,\delta/(2 \vartheta_2))})}^2\\
&\ge &
 2^{-2\doub} 
 \vartheta_2^{-\doub}\left( \frac{2+\sin \omega}{\Gamma_1\sin\omega}\right)^{-2\doub}
 \|p\|_{C( \overline{B(x_0,2\delta)}\cap \Omega))}^2 .
 \end{eqnarray*}
%
 By a standard inequality
 we have 
 $
\|p\|_{L_2(B(x_0,2\delta)\cap \Omega)}^2
\le
\mathrm{vol}(B(x_0,2\delta)\cap \Omega)\|p\|_{C( \overline{B(x_0,2 \vartheta_2)}\cap \Omega)}^2
$.
By (\ref{ball_bound}), 
$ \mathrm{vol}(B(z,2 \delta)\cap \Omega)\le \omega_{\Omega}2^d \delta^d$, 
so 
$
\|p\|_{C( \overline{B(x_0,2\delta)}\cap \Omega)}^2 
\ge 
\frac1{\omega_{\Omega}}2^{-d}\delta^{-d} \|p\|_{L_2(B(x_0,2\delta)\cap \Omega)}^2
$.
Orthogonality of the basis guarantees
$\|p\|_{L_2(B(x_0,2\delta)\cap \Omega)}= \|a\|_{\ell_2(\polDim)}$,
so
$$a^T \PP^T\W(z)\PP a\ge C \delta^{-d} \|a\|^2_{\ell_2(M)}$$
and the result follows.
 \end{proof}

\begin{remark}
\revision{
The  norm of  the Gram matrix satisfies $$\|\G(z)\|_{\ell_2(M)\to \ell_2(M)} \le
\|\PP\|_{\ell_2(M)\to \ell_{2} (B(z,\delta)\cap\Xi)}^2 
\|\W(z)\|_{\ell_{2} (B(z,\delta)\cap\Xi)\to\ell_{2} (B(z,\delta)\cap\Xi)} .$$
From Lemma \ref{vandermonde},
$\|\PP\|_{\ell_2(M)\to \ell_{2} (B(z,\delta)\cap\Xi)}^2  \le C  \delta^{-d}  \revision{\mathrm{card}} \bigl(\Xi\cap B(z,\delta)\bigr) $,
while $\Phi(z)\le 1$ implies that
$\|\W(z)\|_{\ell_{2} (B(z,\delta)\cap\Xi)\to\ell_{2} (B(z,\delta)\cap\Xi)} \le 1$.
Consequently,  for $z\in B(x_0,\delta)$,
the relative condition number satisfies  $$\kappa(\G(z))\le C \,\mathrm{card}\bigl(\Xi\cap B(z,\delta)\bigr) $$ for some constant 
$C$ independent of $\delta$ and $\Xi$.
Combining (\ref{delta_condition}) and (\ref{quasi-uniform}) shows that relative condition is bounded independently of $\delta$ and $\Xi$ for quasi-uniform sets of points.
}\revisioncom{We have fixed an error
made in computing the 
matrix norm of $\G(z)$.}
\end{remark}

%
%
%
\section{Application to algebraic manifolds and mesh-free approximation}
\label{S_Alg}
We now apply the results of the previous sections to the case of smooth algebraic varieties.
Specifically, we require that $ X_{0}\subset \R^N$ is the joint zero set of 
polynomials $$P_1,\dots P_{{\D}-\d}\in \mathcal{P}(\R^{\D})$$
and $\M\subset X_{0}$ is the  subset of regular points.
In other words,  the Jacobian
$(\frac{\partial P_j}{\partial x_k})$
has constant (full) rank $\D-\d$ on $\M$.
It follows that $\M$ is an embedded Riemannian submanifold of $\R^N$
and for any compact
$\Omega\subset \M$,
there exists a constant $C_{\Omega}$ so that for any $x,y\in \Omega$,
\begin{equation}
\label{eq:Riemannian-Euclidean}
|x-y|
 \le \dist_{\M}(x,y)\le 
 C_{\Omega}|x-y|
 .
\end{equation}
Furthermore,  the growth estimates and local Markov inequalities of 
\cite{fefferman1996local} and  \cite{bos:etal:1995} apply,
guaranteeing that Assumption \ref{A_function_space} holds on $\M$
for algebraic polynomials $\varPi=\mathcal{P}_{m}(\R^N)$.

In particular, the result 
\cite[Theorem 1.1]{fefferman1996local} 
states that there exists a constant $C_*$ so that 
 for any sufficiently small $r>0$, any $x_0\in\M$ and any
 $m\in\N$,
 the inequality
 %
\begin{align}
\label{Fef_Doubling}
\|  p\|_{C( B_{\M}(x_0,2r))}
&\le
e^{C_* m}
\|  p\|_{C( B_{\M}(x_0,r))}
%
\end{align}
%
holds,  for all $p\in \mathcal{P}_m(\M)$, 
which ensures (\ref{F_Doubling}) from Assumption \ref{A_function_space}.
The first order Markov inequalities given in
\cite[Theorem 1.1]{fefferman1996local} 
and \cite[Main Theorem (2)]{bos:etal:1995}
 also give a first order Markov inequality of the type (\ref{F_LM}) from Assumption \ref{A_function_space}.
 Furthermore, \cite[Proposition 7.1]{bos:etal:1995}
 demonstrates that algebraic manifolds are the only compact, closed Riemannian manifolds on which Markov inequalities
 hold with a constant that depends linearly on the polynomial degree.

Despite the interest in tangential Markov inequalities, the stronger   Assumption (\ref{A_Higher}) does not seem to exist in the literature.  Furthermore, it  does not appear to follow 
 directly by  iterating  the first order inequalities of \cite{fefferman1996local}  or  \cite{bos:etal:1995}.
 We present Lemma \ref{L_Markov}, with proof in the appendix, 
  which follows from the techniques  of \cite{bos:etal:1995}.
  \begin{lemma}\label{L_Markov}
If $\M$ is an algebraic manifold and  $\Omega\subset \M$ is compact, then there is  $\epsilon_*>0$ 
so that for any $m\in \N$ 
and   any $k\in \N$
there is $C$ (depending on $m,k$) so that  if $r<\epsilon_*$,  
$$\|\nabla^k p(y)\|_{C\bigl(\overline{B(x_0,r)}\bigr)}  
\le
C_{m,k}
r^{-k}
\|p\|_{C\bigl(\overline{B(x_0,r)}\bigr)}
$$
holds for all $p\in \mathcal{P}_m(\M)$.
\end{lemma}

Combining Lemma \ref{P_LPR} and Lemma \ref{L_regularity}, we have the following:

%
%
%
\begin{proposition} 
\label{P_algebraic}
Suppose $\M\subset \R^N$ is an algebraic manifold
and
that $\Omega\subset \M$  satisfies  \revision{Assumption}\revisioncom{Referee 1: Minor correction 1} (\ref{A_domain}).
Then for $m\in \N$,
$\Xi\subset \Omega$ with $h= h(\Xi,\Omega)<h_0$  
there is a $C^{\infty}$ smooth, stable, 
local $\mathcal{P}_m$ reproduction. That is, for every $k\in\N$, there
is a constant $C_{k,m}$ and
a map 
${a^k}:
 \Xi \times \Omega \to T_k^0\M$ which satisfy the following four conditions:\revisioncom{Referee 2: Minor Comment 7}
\begin{enumerate}
\item 
for every $z\in \Omega$ if $\dist(\xi,z)>2\frac{(1+\epsilon)}{\epsilon}\kappa h$ then 
${a^k}(\xi,z)=0$
\item 
for every $z\in \Omega$, 
$\sum_{\xi\in \Xi} 
\bigl\| {a^k}(\xi,z) \bigr\|_{ T_k^0\M_z} 
\le 
C_{k,m}
 (1-\epsilon)^{-1} h^{-k}$
\item for every $p\in \mathcal{P}_m(\M)$ and 
$z\in \Omega$, 
$  \sum_{\xi\in \Xi }{a^k}(\xi,z) p(\xi)\le 
\nabla^k p(z)$
\item for every $\xi\in \Xi $, 
${a^k}(\xi,\cdot)\in C^{\infty}(\Omega)$.
\end{enumerate}
Furthermore, for $k=0$ and  $\epsilon\le 1/2$, then item (2)
can be replaced by 
$$
\sum_{\xi\in \Xi}  \bigl| {a^0}(\xi,z) \bigr|
\le 1+3\epsilon.$$
\end{proposition}
\begin{proof}
Fix $z\in \M$ and let $V_{z}= \mathcal{P}_m(\M)$ be endowed
with
the supremum norm from $C(z,\kappa h).$ We handle the cases $k\ge 1$ and $k=0$ separately.

By Lemma \ref{L_Markov}, for unit vectors $v_1,\dots,v_k\in T\M_{z}$,
the functional $$\lambda_{v_1,\dots,v_k}:p\mapsto \nabla^k p(z)(v_1,\dots ,v_k)$$
has norm $\|\lambda_{v_1,\dots,v_k}\|_{V_{z}'}\le C (\kappa h)^{-k} $,
so there is $\aaa^k:\Xi \times \Omega \to T_k^0\M$ for which the conditions for Lemma \ref{P_LPR} hold.
Consequently,  $\aaa^k$ satisfies the condition (\ref{E_general}) with $R= \frac{(1+\epsilon)}{\epsilon}\kappa h$,
$K= C (1-\epsilon)^{-1}(\kappa h)^{-k} $ and
$\opL =\nabla^k$.
Thus  (1)-(4) follow from Lemma \ref{L_regularity}.

In case $k=0$,  
the fact that $\delta_z$ has norm $\|\delta_z\|_{V_z'} =1$ follows automatically, and
Lemma \ref{P_LPR} ensures  $|\aaa^0(\xi,z)|\le (1-\epsilon)^{-1}\le 1+2\epsilon$ when $\epsilon\le 1/2$. 
By  Lemma \ref{L_regularity}, there exists $a^0:\Xi \times \Omega \to \R$ which satisfies
(1)-(4) with $\sum_{\xi\in\Xi}|a^0(z,\xi)|\le 1+3\epsilon$.
\end{proof}

 %
 %
 %

%
%
%
\subsection{RBF interpolation}
\label{SS_interp}
Given a positive definite
radial basis function (RBF) $\phi:\R^d\to \R$, 
we may restrict $\phi$ to $\M$, as in \cite{FW}, to 
obtain a positive definite kernel 
$$\Phi:\M\times \M\to \R:(x,y)\mapsto \phi(x-y).$$
Associated to $\Phi$ is a reproducing kernel Hilbert space $\Nn_{\Phi}\subset C(\M)$,
called the {\em native space}  
with the property that $\langle u,\Phi(\cdot, y)\rangle_{\Nn_{\Phi}} = u(y)$
for all $u\in \Nn_{\Phi}$ and $y\in\M$.

For finite $\Xi\subset \M$, we define the space $V(\Xi) = \mathrm{span}_{\xi\in \Xi} \Phi(\cdot,\xi)$.
The orthogonal projection  operator $I_{\Xi}:\Nn_{\Phi} \mapsto V_{\Xi}$
produces the unique function $I_{\Xi}f\in V_{\Xi}$ which satisfies $I_{\Xi}f|_{\Xi} = f|_{\Xi}$.
It follows that there exist Lagrange functions $\chi_{\xi}\in V_{\Xi}$ for which 
\revision{$\chi_{\xi}(\zeta) = \delta_{\zeta,\xi}$ where $\xi,\zeta\in \Xi$ and $\delta_{\zeta,\xi}$ is the
Kronecker $\delta$}.\revisioncom{Referee 2: Minor Comment 12}
Thus, we may write the interpolant as $I_{\Xi} f= \sum_{\xi\in \Xi} f(\xi) \chi_{\xi}$.

The {\em power function} 
$P_{\Xi}:
\M\to \R :  
x\mapsto 
\sup_{f\in \Nn_\Phi} 
\frac{|f(x) - I_{\Xi}f(x)|}{\|f\|_{\Nn_{\Phi}}}
$  
measures the 
pointwise interpolation error at $x$.
An elementary calculation shows that
$$
P_{\Xi}(x)^2:= 
\Phi(x,x) 
-2 \sum_{\xi\in\Xi} \chi_{\xi}(x) \Phi(x,\xi)
+\sum_{\xi,\zeta \in \Xi} \chi_{\xi}(x) \chi_{\zeta}(x)
\Phi(\xi,\zeta).
$$
This is the minimum value 
of the quadratic form $\mathcal{Q}_{\Xi,x}:\R^{\Xi}\times\R^{\Xi}\to \R$ 
defined as:
$$
\mathcal{Q}_{\Xi,x}(u) =  
\Phi(x,x) -2 \sum_{\xi\in\Xi} u_{\xi} \Phi(x,\xi)
+\sum_{\xi,\zeta \in \Xi}  u_\xi u_\zeta\Phi(\xi,\zeta).
$$
\revision{See Figure \ref{fig:torus_RBF} for an illustration of the power function on a curved torus.}\revisioncom{Referee 1: Minor correction 4.}

\begin{proposition}
\label{P_RBF}
If the kernel $\Phi:(x,y) \mapsto \phi(x-y)$ generated by the RBF $\phi$
satisfies $[\phi]_{k}:=\sup_{h>0}h^{-k}\revision{\inf}_{p\in \mathcal{P}_{k}} \|\phi-p\|_{L_{\infty}\bigl(B(0,h)\bigr)}<\infty$,\revisioncom{Changed $\sup$ to $\inf$}
if $\M$ is an algebraic manifold,
and if $\Omega\subset \M$  satisfies Assumption 2, 
then for $h=h(\Xi,\Omega)\le h_0$ we have 
$$\|P_{\Xi}\|_{L_{\infty}(\Omega)} \le C  h^{k/2} [\phi]_{k}.$$
\end{proposition}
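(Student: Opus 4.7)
The plan is to combine the Backus--Gilbert variational identity $P_{\Xi}(x)^{2}=\min_{v\in\R^{\Xi}} \mathcal{Q}_{\Xi,x}(v)$, already recorded before the statement, with the local $\mathcal{P}_{k}$-reproduction supplied by Proposition \ref{P_algebraic}. Concretely, I would fix $x\in\Omega$ and substitute $u_{\xi}:=\tilde{a}(\xi,x)$, where $\tilde{a}$ is the smooth, stable, local reproduction for $\varPi=\mathcal{P}_{k}(\R^{N})$. By items (1)--(3) of that proposition, $u$ is supported in a Riemannian ball around $x$ of radius $R=O(h)$ (which by \eqref{eq:Riemannian-Euclidean} lies inside a Euclidean ball of comparable radius), satisfies $\sum_{\xi}u_{\xi}\,p(\xi)=p(x)$ for every $p\in\mathcal{P}_{k}(\R^{N})$, and obeys $\|u\|_{\ell_{1}}\le(1-\epsilon)^{-1}$. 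The starting inequality is then $P_{\Xi}(x)^{2}\le \mathcal{Q}_{\Xi,x}(u)$.

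Next I would rewrite the quadratic form using the evenness of $\phi$ as
\[
\mathcal{Q}_{\Xi,x}(u) \;=\; \mathcal{R}(x)-\sum_{\xi\in\Xi} u_{\xi}\,\mathcal{R}(\xi),\qquad \mathcal{R}(y) \;:=\; \phi(x-y)-\sum_{\zeta\in\Xi} u_{\zeta}\,\phi(\zeta-y),
\]
an elementary algebraic identity. Since $\sum u_{\xi}\,p(\xi)=p(x)$ for every $p\in\mathcal{P}_{k}(\R^{N})$, one may replace $\mathcal{R}$ by $\mathcal{R}-p$ in the display above without changing its value, and thereby obtain
\[
|\mathcal{Q}_{\Xi,x}(u)| \;\le\; \bigl(1+\|u\|_{\ell_{1}}\bigr)\,\|\mathcal{R}-p\|_{L_{\infty}(B_{\euc}(x,R)\cap\M)}.
\]
The task is thus reduced to producing a single $p\in\mathcal{P}_{k}(\R^{N})$ that approximates the residual $\mathcal{R}$ well on a Euclidean ball of radius $O(h)$ around $x$.

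To build $p$ I would invoke the hypothesis $[\phi]_{k}<\infty$: for each $\zeta\in \mathrm{supp}(u)\cup\{x\}$ I choose a polynomial $q_{\zeta}\in\mathcal{P}_{k}$ approximating $\phi$ on the Euclidean ball $B_{\euc}(\zeta-x,R)\subset B_{\euc}(0,2R)$ with error $O(R^{k}[\phi]_{k})$, then set $p_{\zeta}(y):=q_{\zeta}(\zeta-y)\in\mathcal{P}_{k}$. The natural choice $p:=p_{x}-\sum_{\zeta}u_{\zeta}\,p_{\zeta}\in\mathcal{P}_{k}$ and a termwise estimate give $\|\mathcal{R}-p\|_{L_{\infty}(B_{\euc}(x,R))}\le C h^{k}[\phi]_{k}$, hence $P_{\Xi}(x)^{2}\le C h^{k}[\phi]_{k}$, from which the stated bound follows after taking square roots. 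The main obstacle will be the construction of $p$: one needs polynomial approximants of $\phi$ on Euclidean balls that are not centered at the origin, which requires reading $[\phi]_{k}$ as furnishing uniform-in-center approximants on balls of radius up to $2R$ (natural for radial $\phi$, or supplied by a Taylor expansion using the regularity implicit in a finite $[\phi]_{k}$). A secondary bookkeeping issue, resolved by \eqref{eq:Riemannian-Euclidean}, is the passage between the Riemannian balls delivered by Proposition \ref{P_algebraic} and the Euclidean balls native to $\phi$.
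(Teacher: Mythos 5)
Your proposal is correct and follows essentially the same route as the paper: both use the local polynomial reproduction from Proposition \ref{P_algebraic} as the competitor in the variational characterization $P_{\Xi}(x)^2\le\mathcal{Q}_{\Xi,x}(u)$, exploit the reproduction property to subtract a polynomial from the quadratic form (your two-stage residual $\mathcal{R}$ is algebraically the same as the paper's observation that $\tilde{\mathcal{Q}}_p=0$), and reduce to best polynomial approximation of $\phi$ on a Euclidean ball of radius $O(h)$. The ``main obstacle'' you flag is already resolved by your own containment $B_{\euc}(\zeta-x,R)\subset B_{\euc}(0,2R)$: a single best approximant of $\phi$ on $B_{\euc}(0,2R)$, which $[\phi]_k$ supplies directly, serves for every $\zeta$.
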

For the condition  $[\phi]_{k}<\infty$, it is sufficient that $\phi$ is in the H{\"o}lder-Zygmund space $B_{\infty,\infty}^k(\R^N)$, and
a fortiori it suffices for  $\phi$ to be in $C^k(\R^d)$.
\begin{figure}[t]
\begin{tabular}{ccc}
\includegraphics[width=0.30\textwidth]{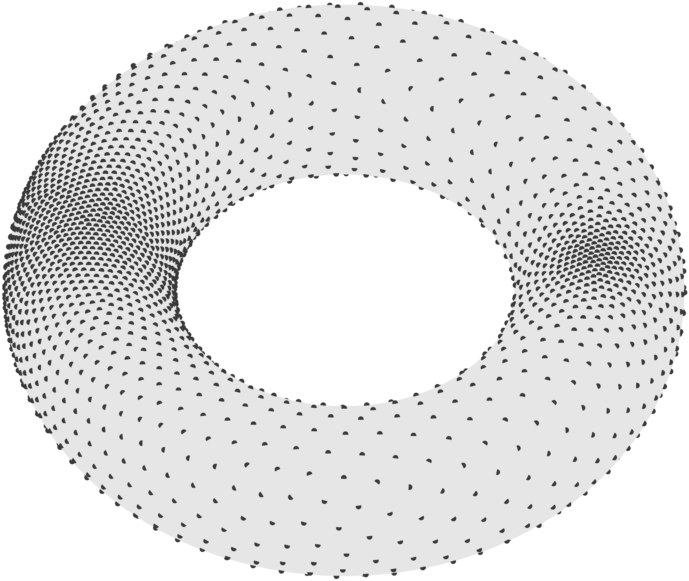} &
\includegraphics[width=0.30\textwidth]{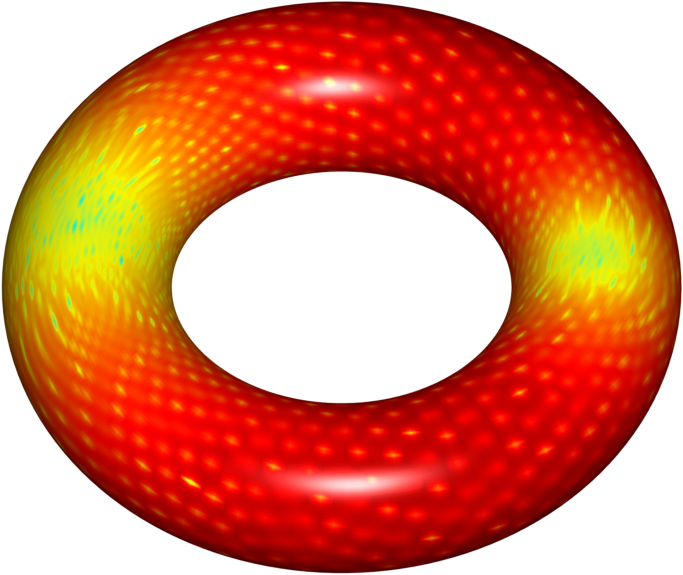} &
\includegraphics[width=0.33\textwidth]{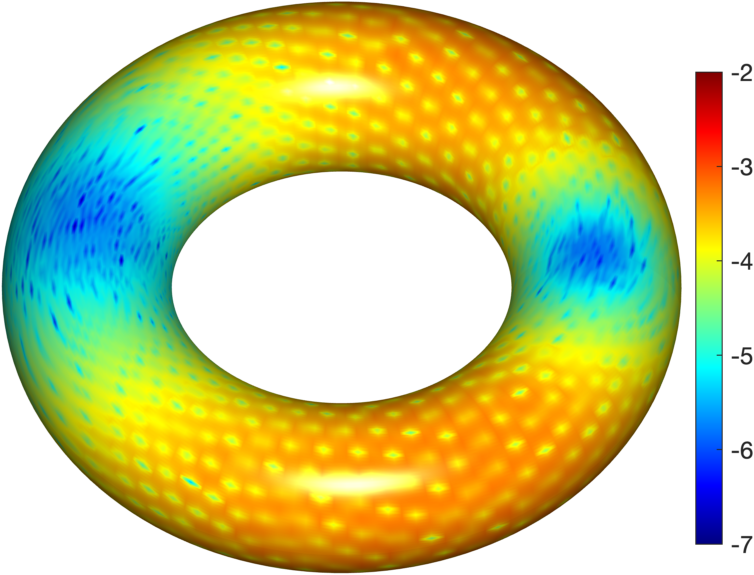}
\end{tabular}
\caption{Left: Distribution of centers $\Xi$ on the torus. Middle and right: heat map of the power function $P_{\Xi}$ (displayed on a $\log_{10}$ scale) using the Mat{\'e}rn kernel with $\matern=4$ and $\matern=5$, respectively.\label{fig:torus_RBF}}
\end{figure}
\begin{proof}
For the local polynomial reproduction 
$a$
from Proposition \ref{P_algebraic}
with $\epsilon =1/2$,
we note that $a(\cdot,x)\in\R^{\Xi}$, so
$P_{\Xi}(x)^2 
\le 
\mathcal{Q}_{\Xi,x}(a(\cdot,x))$
follows by optimality.
Since $\Phi$ is obtained as a restriction: 
$\Phi(x,y)= \phi(x-y)$, 
we have
$$\mathcal{Q}_{\Xi,x}(a(\cdot,x)) 
= 
\phi(0) 
- 2 \sum_{\xi\in\Xi} a(\xi,x) \phi(x-\xi)
+\sum_{\xi,\zeta \in \Xi}  
a(\xi,x) a(\zeta,x)\phi(\xi-\zeta).$$
Replacing every occurrence of $\phi$  
with $p\in \mathcal{P}_m$ in this expression 
we have
$$\tilde{\mathcal{Q}}_p := p(0) - 2 \sum_{\xi\in\Xi}a(\xi,x)  p(x-\xi)
+\sum_{\xi,\zeta \in \Xi}  a(\xi,x)    a(\zeta,x) p(\xi-\zeta)=0.$$
Thus $P_{\Xi}(x)^2\le  |\mathcal{Q}(a(\cdot,x)) | = |\mathcal{Q}(a(\cdot,x)) - \tilde{\mathcal{Q}_p}|$
holds, from which we obtain, by rearranging terms and applying the triangle inequality, the estimate
\begin{eqnarray*}
P_{\Xi}(x)^2
&\le& |\phi(0)-p(0)| +
4\max_{\xi\in\Xi\cap B_{\M}(x,2\kappa h)}|\phi(x-\xi) - p(x-\xi)| \\
&&+ \,
4\max_{\xi,\zeta\in \Xi\cap B_{\M}(x,2\kappa h)}|\phi(\xi-\zeta) - p(\xi-\zeta)|.
\end{eqnarray*}
The result now follows by noting that if 
$\dist_{\M}(\xi,\zeta)<4\kappa h$,
then \cite[Theorem 6]{FW} ensures
$|\xi-\zeta|\le K h$ for some $K\propto \kappa$, and the fact that
 $\|\phi - p\|_{B(0,  Kh)}\le C (Kh)^{k} [\phi]_{k}$.
 \end{proof}
 From this, we have the pointwise bounds on the interpolation error
  for $f\in \Nn_{\Phi}$
 \begin{equation}
 \label{Int}
 | f(x)- I_{\Xi} f(x)|\le P_{\Xi}(x) \|f\|_{\Nn_{\Phi}}
 \le C\Bigl(h(\Xi,\Omega)\Bigr)^{k/2}\|f\|_{\Nn_{\Phi}}.
 \end{equation}
 Although there exist kernel interpolation results 
 in more general settings 
 (e.g., \cite{FW} treats  compact, 
 embedded Riemannian manifolds), 
 such results generally 
 employ a global fill distance 
 $h_{\M} = \sup_{x\in \M} \dist_{\M}(x,\Xi)$, while
 the novelty of (\ref{Int}) is in its locality -- 
 the parameter 
 $h(\Xi,\Omega)$ 
 depends on the distribution of $\Xi\cap \Omega$ 
 in a potentially  small set $\Omega$.
 
This is illustrated by the numerical example
displayed in Figure \ref{fig:torus_RBF}, where
a finite point set $\Xi\subset \M$  is selected
on the torus $\M$ (which is the
zero set of a fourth degree polynomial in $\R^3$). 
Using the Mat{\'e}rn RBF 
of integer order $\matern$:
$$ 
\phi(x) 
= 
|x|^{\matern-3/2}
K_{\matern-3/2}(|x|)
$$
we display the power function $P_{\Xi}$ for two different 
values $\matern$,
noting it is significantly smaller in regions
$\Omega$ where $h(\Xi;\Omega)$ shrinks. 
The Mat{\'e}rn RBF serves as a fundamental solution (up to a constant multiple) of 
$(1-\Delta)^{\matern}$ in $\R^3$, and thus its native space is $W_2^\matern(\R^3)$. 
The restricted kernel 
$\Phi(x,y) =\phi(x-y)$
on the torus $\M$ has, by the trace theorem,
native space $W_2^{\matern-1/2}(\M)$.

We note that 
for the above choice of kernel
$\phi\in 
C^{\infty}(\R^3\setminus\{0\})$, 
$\phi\in C^{2\matern-3-\epsilon}(\R^3)$ 
for any $\epsilon>0$ but   
$\phi \notin C^{2\matern-3}(\R^3)$.
\revision{This and the fact that $[\phi]_{2s-3}<\infty$ 
(ensuring that
Proposition \ref{P_RBF} holds with $k=2\matern-3$)
follow from the 
asymptotic expansion
for the Mat{\'e}rn kernel given in \cite[Example 4.2]{HR}.
In short,  
$\phi(x) = u(|x|)+ \mathsf{h}_{2s-3}(x) v(|x|)$, where
 $u,v$ are analytic and even
 and
$$\mathsf{h}_{2s-3}(x)= \begin{cases} |x|^{2s-3},&2s-3\notin 2\Z\\
 |x|^{2s-3}|\log |x||, &2s-3\in 2\Z.\end{cases}$$
From this, the condition $[\phi]_{\alpha}<\infty$ is equivalent to  $[\mathsf{h}_{2s-3}]_{\alpha}<\infty$.
This  follows automatically for non-even indices; when $2s-3$ is even,
fix $h>0$ and set $p(x) = |x|^{2s-3}|\log h|$;
then simple algebraic manipulation shows $|\mathsf{h}_{2s-3}(x)-p(x)|\le h^{2s-3}|\mathsf{h}_{2s-3}(x/h)|$,
so $[\mathsf{h}_{2s-3}]_{2s-3}\le \|\mathsf{h}_{2s-3}\|_{L_{\infty}(B(0,1))}$.}\revisioncom{Referee 1: Fifth comment from ``Comments and suggestions''.}

\subsection{MLS approximation}\label{SS_MLS}
We consider
(\ref{eq:mlsprimal}),
modified in the following way. For an algebraic manifold $\M$,
with $\Omega\subset \M$, $\Xi\subset \Omega$ and $f\in C(\Omega)$,
we define the MLS approximant
\begin{align}
\label{Pol_MLS}
 \mathcal{M}_{\Xi}f(z) = 
    p_z^*(z),\text{ where } p_z^*:=\arg\min_{p \in \mathcal{P}_m} 
    \sum_{\xi \in \Xi} \left(p(\xi) -f(\xi) \right)^{2} \weightkernel(\xi,z)
\end{align}
by taking weight function
$\weightkernel(\xi,z) = \Phi(\frac{|z-\xi|}{\delta})$. Note that this is simply 
(\ref{eq:genmlsprimal}) with $\varPi=\mathcal{P}_m$.

\begin{remark}\label{R_euclidean_metric}
We note that 
this weight function satisfies Assumption \ref{A_weight}.
In particular, using the Euclidean norm on $\R^N$, we have
$F_{\xi}(z) := |z-\xi|$,
and can estimate $\nabla^k F_{\xi}$ by the metric equivalence (\ref{eq:expmap}).
Indeed, we have, 
$$
F_{\xi}(\Exp_{\xi}(x) )
= 
|\Exp_{\xi}(x)-\xi|
=
\left| \psi(x)\right|
$$
where $\psi(x) :=\Exp_{\xi}(x)-\xi $ is smooth 
(in fact, analytic since $\M$ is an algebraic manifold).
By the chain rule and product rule,
$D^{\alpha} F_{\xi}(\Exp_{\xi}(x) )$
is a linear  combination of functions of the form 
$h_{\beta}(\psi(x))\prod_{j=1}^{|\beta|} D^{\gamma_j}\psi(x)$
where $0<\beta\le \alpha$,
 $\sum_{j=1}^{|\beta|}\gamma_j = \alpha$,
and $h_{\beta}$ is the partial derivative of the Euclidean norm,
$D^{\beta} (|y | )= h_{\beta}(y)$.
\revision{Each $h_{\beta}$ is homogeneous of order $1-|\beta|$ and satisfies
 $|h_{\beta}(y)|\le C_{|\beta|,N}|y|^{1-|\beta|}$, with a constant $C_{\beta,N}$ depending on $|\beta|$ and $N$.}\revisioncom{Added a bit of explanation.}
Likewise, 
 $\psi$ is smooth and satisfies $\bigl|D^{\gamma}\psi(x)\bigr|\le C_{\gamma,\Omega}$,
 it follows that 
 $$
 \left| h_{\beta}(\psi(x))\prod_{j=1}^{|\beta|} D^{\gamma_j}\psi(x)\right|
 \le 
\revision{ C_{|\beta|,\Omega} |\psi(x)|^{1-|\beta|},}\revisioncom{Fixed an error with the  exponent}
 $$
 and, hence, 
$|D^{\alpha} F_{\xi}(\Exp_{\xi}(x) )|\le C |\psi(x) |^{1-|\alpha|} = C|\Exp_{\xi}(x)-\xi|^{1-|\alpha|}$.
Finally,
the metric equivalence (\ref{eq:Riemannian-Euclidean})
guarantees that (\ref{eq:homogeneity}) is satisfied.
 \end{remark}

\begin{proposition}
\label{P_MLS_algebraic}
For an algebraic manifold $\M$ and
 a compact subset $\Omega\subset \M$ with Lipschitz boundary,
 a $C^k$ weight  kernel
 $
 \weightkernel(\xi,z) 
 = 
 \Phi\Bigl(\frac{F_{\xi}(z)}{\delta}\Bigr),
 $
  there is a constant $C$  so that for
 $z\in \Omega$ 
and $j\le k$ the MLS shape functions satisfy
 $$
 \sum_{\xi\in\Xi} \|\nabla^j b^{\star}(\xi,z)\|_{T_{j}^0\M_z} \le C 
 \delta^{-j}  \revision{\mathrm{card}}\bigl(\Xi \cap B(z,\delta)\bigr).
 $$
\end{proposition}

From this, we have the following results, which match 
well-known results from the Euclidean setting, but are novel in the context
of MLS approximation on manifolds. 
In the  statement of the result, as well as its proof,
we use the notation $D_v f(z)$ to denote the directional
derivative of $f$ in the direction $v$ at $z$ ; for 
functions defined on a $\R^N$ neighborhood of $z$, this 
is the customary $\lim_{t\to 0} \frac1t (f(z+tv)-f(z))$,
while for functions defined on $\M$ and $v\in T_0^1\M_z$, this is $\nabla f(z) v$.
Higher order approximation results follow with a straightforward modification.
\begin{proposition}
\label{prop:mls_error_estimate}
For 
\revision{an algebraic manifold $\M$,}\revisioncom{Referee 2: Minor Comment 13}
a compact subset $\Omega\subset \M$ with Lipschitz boundary and for any $\rho_0>1$, there is a constant $C$ so that,
for any sufficiently dense  $\Xi\subset \Omega$ which satisfies $\rho := \frac{h}{q}\le \rho_0$, we
have for any $k\le m+1$ and $f\in C^k(\Omega)$, that
$$
\left| f(z) -\mathcal{M}_{\Xi}f(z)\right| 
\le 
C  \delta^k \|f\|_{C^k(\Omega)}
$$
and for any unit tangent vector $v\in T_0^1\M_{z}$  at $z$, we have 
$$
\left| 
D_v f(z)
-
D_v \mathcal{M}_{\Xi}f(z)
\right| \le C\delta^{k-1} \|f\|_{C^k(\Omega)}$$
\end{proposition}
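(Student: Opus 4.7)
The proof follows the standard MLS error analysis template adapted to the algebraic manifold setting: combine the polynomial reproduction property of the shape functions with a Taylor expansion and the stability estimates established earlier in the paper. Because $\M$ is a smooth algebraic submanifold of $\R^N$, there exists a tubular neighborhood $U$ of $\Omega$ in $\R^N$ together with a smooth projection $\pi:U\to\M$. I would extend $f\in C^k(\Omega)$ to $\tilde f := f\circ\pi\in C^k(U)$, with $\|\tilde f\|_{C^k(U)}\le C\|f\|_{C^k(\Omega)}$. Fix $z\in\Omega$ and let $p = T_z^{k-1}\tilde f\in\mathcal{P}_{k-1}(\R^N)\subset\mathcal{P}_m(\R^N)$ denote the Taylor polynomial of $\tilde f$ at $z$ of degree at most $k-1$; this is a valid element of the ambient polynomial space since $k\le m+1$.

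For the pointwise bound, write $\mathcal{M}_\Xi f(z)=\sum_\xi b^\star(\xi,z)f(\xi)$ and use the $\mathcal{P}_m$-reproduction property of $b^\star$ (Lemma \ref{L_MLS_stability}) to obtain
\begin{equation*}
f(z)-\mathcal{M}_\Xi f(z)=\bigl(f(z)-p(z)\bigr)-\sum_{\xi\in\Xi}b^\star(\xi,z)\bigl(f(\xi)-p(\xi)\bigr).
\end{equation*}
Since $p(z)=\tilde f(z)=f(z)$, the first term vanishes. By locality (Lemma \ref{L_MLS_stability}) only centers $\xi$ with $|z-\xi|\le\delta$ contribute, and for each such center the standard Euclidean Taylor remainder estimate applied to $\tilde f$ gives $|f(\xi)-p(\xi)|=|\tilde f(\xi)-p(\xi)|\le C\delta^k\|f\|_{C^k(\Omega)}$. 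The stability bound $\sum_\xi|b^\star(\xi,z)|\le C\tilde\rho^{d/2}$ from the same lemma---together with the quasi-uniformity $\rho\le\rho_0$ and the natural choice $\delta\asymp h$ which forces $\tilde\rho$ to be comparable to $\rho$---absorbs this factor into a constant depending only on $\rho_0$ and the geometric data.

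For the derivative estimate, differentiate the reproduction identity: for any unit $v\in T_0^1\M_z$ and any $p\in\mathcal{P}_m(\R^N)$, $\sum_{\xi\in\Xi}D_v b^\star(\xi,z)p(\xi)=D_v p(z)$. This yields
\begin{equation*}
D_v f(z)-D_v\mathcal{M}_\Xi f(z)=D_v(f-p)(z)-\sum_{\xi\in\Xi}D_v b^\star(\xi,z)\bigl(f(\xi)-p(\xi)\bigr).
\end{equation*}
For $k\ge 2$ the first term vanishes because the Taylor polynomial matches $\tilde f$ through order $k-1\ge 1$; for $k=1$ it is trivially controlled by $\|f\|_{C^1}$, matching the claimed $\delta^{0}$ rate. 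The sum is estimated using Proposition \ref{P_MLS} with $j=1$ (whose hypotheses reduce to Assumption \ref{A_function_space} in the first-order case, and are therefore available on algebraic manifolds), giving $\sum_\xi\|\nabla b^\star(\xi,z)\|_{T_1^0\M_z}\le C\tilde\rho\,\delta^{-1}$. Combined with the pointwise Taylor remainder and locality, this produces the overall bound $C\delta^{k-1}\|f\|_{C^k(\Omega)}$.

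The main obstacle is reconciling intrinsic and ambient notions: the polynomial $p$ and its Euclidean directional derivatives live on $\R^N$, while $f$, the shape functions $b^\star(\xi,\cdot)$, and the tangent vector $v$ are intrinsic to $\M$. The extension $\tilde f$ and the identification of $v\in T_0^1\M_z$ with a vector in $\R^N$ must be handled so that $D_v p$ (the $\R^N$ directional derivative of the polynomial) agrees with the covariant derivative of its restriction to $\M$, and so that the Euclidean Taylor remainder can be controlled through the intrinsic norm $\|f\|_{C^k(\Omega)}$. The metric equivalence (\ref{eq:Riemannian-Euclidean}) and the smoothness of the tubular projection $\pi$ underpin this identification, and the quasi-uniformity assumption $\rho\le\rho_0$ is what prevents the $\tilde\rho^{d/2}$ and $\tilde\rho$ factors from degrading the order of convergence.
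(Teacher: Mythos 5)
Your proposal is correct and follows essentially the same route as the paper: extend $f$ to a Euclidean neighborhood of $\Omega$, compare with a degree-$\le m$ polynomial (the paper takes an infimum over $\mathcal{P}_m$ realized by exactly the Taylor polynomial you use), and invoke the reproduction, locality, and stability bounds on $b^{\star}$ and $\nabla b^{\star}$ from Lemma \ref{L_MLS_stability} and Proposition \ref{P_MLS}. Your explicit remarks on the tubular-neighborhood extension and on why only the first-order Markov inequality is needed for the $j=1$ case are consistent with, and slightly more detailed than, the paper's argument.
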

\begin{proof}
We  have the standard estimate
%
\begin{align*}
  \left| f(z) -\mathcal{M}_{\Xi}f(z)\right| 
  &\le 
    \left| f(z) -p(z)\right|+\left|\mathcal{M}_{\Xi} p(z) - \mathcal{M}_{\Xi}f(z)\right|\\
  &\le    \left(1+\sum_{\xi \in \Xi}| b^{\star}(\xi,z) | \right) \inf_{p\in  \mathcal{P}_m }
    \|f-p\|_{L_{\infty}(\Omega\cap B(z,\delta))}.
\end{align*}
%
Note that $\sum_{\xi\in\Xi}| b^{\star}(\xi,z) | $ is bounded by Lemma \ref{L_MLS_stability}.
If $f\in C^k(\Omega)$, then we can take an extension $\mathcal{E}f\in C^k(W)$ defined on a neighborhood $W$ of $\Omega$ in $\R^N$ 
 with $\|\mathcal{E}f\|_{C^k(W)}\le C\|f\|_{C^k(\Omega)}$.
Thus,
%
$$
\|f-p\|_{L_{\infty}(\Omega\cap B(z,\delta))}
\le 
\|\mathcal{E}f-p\|_{L_{\infty}(W\cap B_{\euc}(z,\delta))} 
\le 
\delta^k\|\mathcal{E}f\|_{C^k(W)} 
\le 
\delta^k \|f\|_{C^k(\Omega)}.
$$
In a similar way, we may write
$$
|\nabla f(z)v - \nabla \mathcal{M} f(z)v|
=
|\nabla f(z)v -\nabla p(z)v|
+\| \nabla \mathcal{M}( f-p)(z)\|_{T_1^0\M_z}.
$$
%
The second expression is bounded by 
$\sum_{\xi\in\Xi}\| \nabla b^{\star}(\xi,z) \| \|f-p\|_{L_{\infty}(\Omega\cap B(z,\delta))}$,
while the first can be controlled by directional derivative
$|D_v \mathcal{E}f(z)-D_v p(z)|$
since both functions $\mathcal{E}f$ and $p$ are defined on $W$.
Thus,
%
\begin{eqnarray*}
|\nabla f(z)v - \nabla \mathcal{M} f(z)v|
&\le&
\inf_{p\in \mathcal{P}_{m}} |D_v \mathcal{E}f(z)-D_v p(z)|
+
\frac{C}{h}  \| \mathcal{E}f-p\|_{L_{\infty}(W\cap B_{\euc}(z,\delta))}\\
&\le& 
C\delta^{k-1}\|f\|_{C^k(\Omega)}
\end{eqnarray*}
%
and the result follows.
\end{proof}

%
%
%
\subsubsection{Treating noisy data}
\label{SS_noisy}
We note that (\ref{Pol_MLS}) can treat data $Y= (\xi,y_{\xi})_{\xi\in\Xi}$  directly. 
If $y_{\xi} = f(\xi) + \epsilon_{\xi}$ is
perturbed by noise, say  by  independent, identically distributed $\epsilon_{\xi} \sim \mathcal{N}(0,\sigma^2)$,
then we may write
%
\begin{align*}
	\mathcal{M}_{\Xi}(Y)(z)&:=\sum_{\genfrac{}{}{0pt}{}{\xi \in \Xi}{\weightkernel(\xi,z)>0}} b^{\star}(\xi,z) y_{\xi} = \sum_{\genfrac{}{}{0pt}{}{\xi \in \Xi}{\weightkernel(\xi,z)>0}} b^{\star}(\xi,z) f(\xi) +  \sum_{\genfrac{}{}{0pt}{}{\xi \in \Xi}{\weightkernel(\xi,z)>0}} b^{\star}(\xi,z) \epsilon_{\xi}
	\\&=\mathcal{M}_{\Xi}(f)(z)+\sum_{\genfrac{}{}{0pt}{}{\xi \in \Xi}{\weightkernel(\xi,z)>0}} b^{\star}(\xi,z) \epsilon_{\xi}
\end{align*}
%
Now using linearity of expectation,  and the fact that $\mathbb{E}\left[ \epsilon_{\xi}  \right]=0$, we get
%
\begin{align*}
	\mathbb{E}\left[ \left(f(z)- \mathcal{M}_{\Xi}(Y)(z)\right)^2  \right]
	&=\left( f(z)- \mathcal{M}_{\Xi}(f)(z)\right)^2 
	+\mathbb{E}\left[ \left( \sum_{\genfrac{}{}{0pt}{}{\xi \in \Xi}{\weightkernel(\xi,z)>0}} b^{\star}(\xi,z) \epsilon_{\xi} \right)^2  \right].
\end{align*}
%
Expressing 
$\bigl( 
\sum_{\genfrac{}{}{0pt}{}{\xi \in \Xi}{\weightkernel(\xi,z)>0}} b^{\star}(\xi,z) \epsilon_{\xi} 
\bigr)^2$
as
$\sum_{\genfrac{}{}{0pt}{}{\xi,\xi^{\prime} \in \Xi}{\weightkernel(\xi,z) \weightkernel(\xi^{\prime},z)>0 }} 
		 b^{\star}(\xi,z) b^{\star}(\xi^{\prime},z)\epsilon_{\xi}  \epsilon_{\xi^{\prime}} 
$,
we treat the second term as 
%
\begin{align*}
 \mathbb{E}\left[ \left( \sum_{\genfrac{}{}{0pt}{}{\xi \in \Xi}{\weightkernel(\xi,z)>0}} b^{\star}(\xi,z) \epsilon_{\xi} \right)^2  \right]
 &= 
 \sigma^{2} 
 \sum_{\genfrac{}{}{0pt}{}{\xi \in \Xi}{\weightkernel(\xi,z)>0}} |b^{\star}(\xi,z)  |^2 
 \le
 \sigma^{2}\left( \sum_{\xi \in \Xi} |b^{\star}(\xi,z)| \right)^2
 \le 16 C^2 \rho^{2d} \sigma^2.
 \end{align*}
%
This shows for the mean squared error (MSE) the bound
\begin{align*}
	\mathbb{E}\left[ \left(f(z)- \mathcal{M}_{\Xi}(Y)(z)\right)^2  \right] \le \left( f(z)- \mathcal{M}_{\Xi}(f)(z)\right)^2+16 C^2 \rho^{2d} \sigma^2.
\end{align*}

%
%
%
\section{Numerical experiments for MLS}
\label{S_NE}
In this section, we numerically demonstrate the MLS approximation stability and error estimates from the previous section. Before presenting these results, we discuss the algorithm used to produce the MLS approximants.

\subsection{MLS method for approximation on manifolds\label{sec:mls_algorithm}}
We describe our method for
constructing MLS approximants for two dimensional algebraic manifolds  $\M\subset \R^3$,
the generalization to higher dimensional manifolds should be straightforward. To make
the method general and easy to implement, we want to work with the ambient polynomial
space $\mathcal{P}_m(\R^3)$ of total degree $m$. The issue with this is that depending on $m$ and
the degree of the algebraic manifold, this can lead to rank deficiency in the local least
squares problems inherent to \eqref{Pol_MLS} since the dimension of $\mathcal{P}_m(\R^3)$  
may be larger than
the dimension of $\mathcal{P}_m(\M)$. 
To further elucidate the issue we first work out the the exact dimension $\mathcal{P}_m(\M)$ using some
results from algebraic geometry in Appendix \ref{S_ideal}.

\subsubsection*{Selecting the parameter $\delta$}

The support parameter defines the local subsets of points of $\Xi\subset\Omega$ for the weighted least squares problem for each evaluation point $x\in X\subset\Omega$; we denote these subsets as $\Xi_{x}(\delta)$.  Given $m$ and the set of evaluation points $X$ to compute the approximant, $\delta$ should be chosen so that 
    $N_{\Xi_{x}} = \revision{\mathrm{card}} \Xi_{x}(\delta)$
is larger than the dimension of the polynomial space being used in the approximation.  

Since we use the ambient space $\mathcal{P}_m(\R^3)$ to construct the MLS approximants, we require that $N_{\Xi_{x}} > \dim(\mathcal{P}_m(\R^3))=M$.  To ensure that this holds, we use the following procedure.  For each $x$ we determine the $2M$ nearest neighbors from $\Xi$ to $x$ and follow this by computing the minimum radius of the ball in $\R^3$ centered at $x$ that contains all these nearest neighbors. We then choose $\delta$ as the maximum of these minimum radii for all $x\in X$. In our experiments this guaranteed that $N_{\Xi_{x}} \geq 2M$, for all $x$.  Note that determining $\delta$ with this procedure can be done efficiently using $k$-d tree.

\subsubsection*{Singular value decomposition}
To deal with the issue of rank deficiency of MLS problem \eqref{Pol_MLS} that can arise from a dimension mismatch between $\mathcal{P}_m(\R^3)$ and $\mathcal{P}_m(\M)$, we use the singular value decomposition (SVD) of the Vandermonde matrix formed from evaluating a basis for $\mathcal{P}_m(\R^3)$ at $\Xi_x(\delta)$. An important step in this process is choosing the basis for $\mathcal{P}_m(\R^3)$, for which we propose using one that depends on $\Xi_x(\delta)$ as follows.  Let $\{p_j\}_{j=1}^M$ denote the standard monomial basis for $\mathcal{P}_m(\R^3)$ and select the basis for the MLS approximant on $\Xi_x(\delta)$ as $\{p_j((\cdot-x)/\delta)\}_{j=1}^{M}$.  We denote the Vandermonde matrix formed by evaluating this basis at $\Xi_{x}$ as $\mathbf{P}_{\Xi_{x}}$.  

Using the the procedure for choosing $\delta$ described above, this Vandermonde matrix is overdetermined, with $M$ columns and $N_{\Xi_{x}} \geq 2M$ rows.  We denote the (reduced) SVD of $\mathbf{P}_{\Xi_{x}}$ as
\begin{align*}
    \mathbf{P}_{\Xi_{x}} = \mathbf{U}_{\Xi_{x}} \boldsymbol{\Sigma}_{\Xi_{x}} \mathbf{V}_{\Xi_{x}}^T,
\end{align*}
where the $M$ columns of $\mathbf{U}_{\Xi_{x}}$ are orthonormal, $\mathbf{V}_{\Xi_{x}}$ is an $M$-by-$M$ orthogonal matrix, and $\boldsymbol{\Sigma}_{\Xi_{x}}=\text{diag}(\sigma_1,\ldots,\sigma_M)$. To determine a discretely orthonormal basis for $\mathcal{P}_m(\Xi_{x}(\delta))$, we use the first $K$ columns of $\mathbf{U}_{\Xi_{x}}$ corresponding to the numerical rank of $\mathbf{P}_{\Xi_{x}}$.\footnote{This value is determined from the number of singular values that are greater than $N_{\Xi_{x}}\sigma_1 \epsilon_M$, where $\sigma_1$ is the largest singular value and $\epsilon_M = 2^{-52}$ is the machine $\epsilon$ for double precision floating point arithmetic.  This is a similar metric as used by the \texttt{rank} function in MATLAB.}  This discrete orthonormal basis, together with the similarly reduced $\mathbf{V}_{\Xi_{x}}$ and $\boldsymbol{\Sigma}_{\Xi_{x}}$ can then be used directly to solve the optimization problem \eqref{Pol_MLS}.

\revision{We note that the dimension $d$ of ambient space enters the numerical complexity in performing the SVD. To this end, note that $\mathbf{P}_{\Xi_{x}} \in \R^{N_{\Xi_x} \times M}$ with $M=\dim \mathcal{P}_m(\R^N) = \genfrac{(}{)}{0pt}{}{m+N}{m}$. To give a numerical complexity, we note that we are in the situation $N_{\Xi_x}\ge 2M$, thus we refer to the algorithm presented in \cite[Section 5]{Chan:1982} and obtain a complexity of $\mathcal{O}\left(N_{\Xi_{x}} M^2 + M^3 \right)$. We are mostly concerned with applications where $N=3$. In other applications (e.g., machine learning) it often the case that $N\gg1$. This situation most likely needs different approaches, as the ambient space might in those cases be too high dimensional to consider the full polynomial space directly.}\revisioncom{Ref. 1, Comment 6}

Since $N_{\Xi_{x}}$ is much less that $N_{\Xi}$ and does not grow as $\Xi$ increases, the procedure is efficient (and is pleasingly parallel). It also allows one to work entirely with ambient polynomial space and the algorithm finds out both the (numerical) dimension of the space of restricted polynomials and a basis for this space automatically.

\subsubsection*{Weight function}
The final piece for the MLS problem is the weight function, for which we use the $C^4(\R^3)$ Wendland kernel
\begin{align*}
\mlswght(\xi,\zeta) = \lp 1-\delr \rp_{+}^6 \lp 1 + 6\lp\delr\rp + \frac{35}{3}\lp\delr\rp^2\rp.
\end{align*}

\begin{figure}
\centering
\begin{tabular}{cc}
\includegraphics[width=0.45\textwidth]{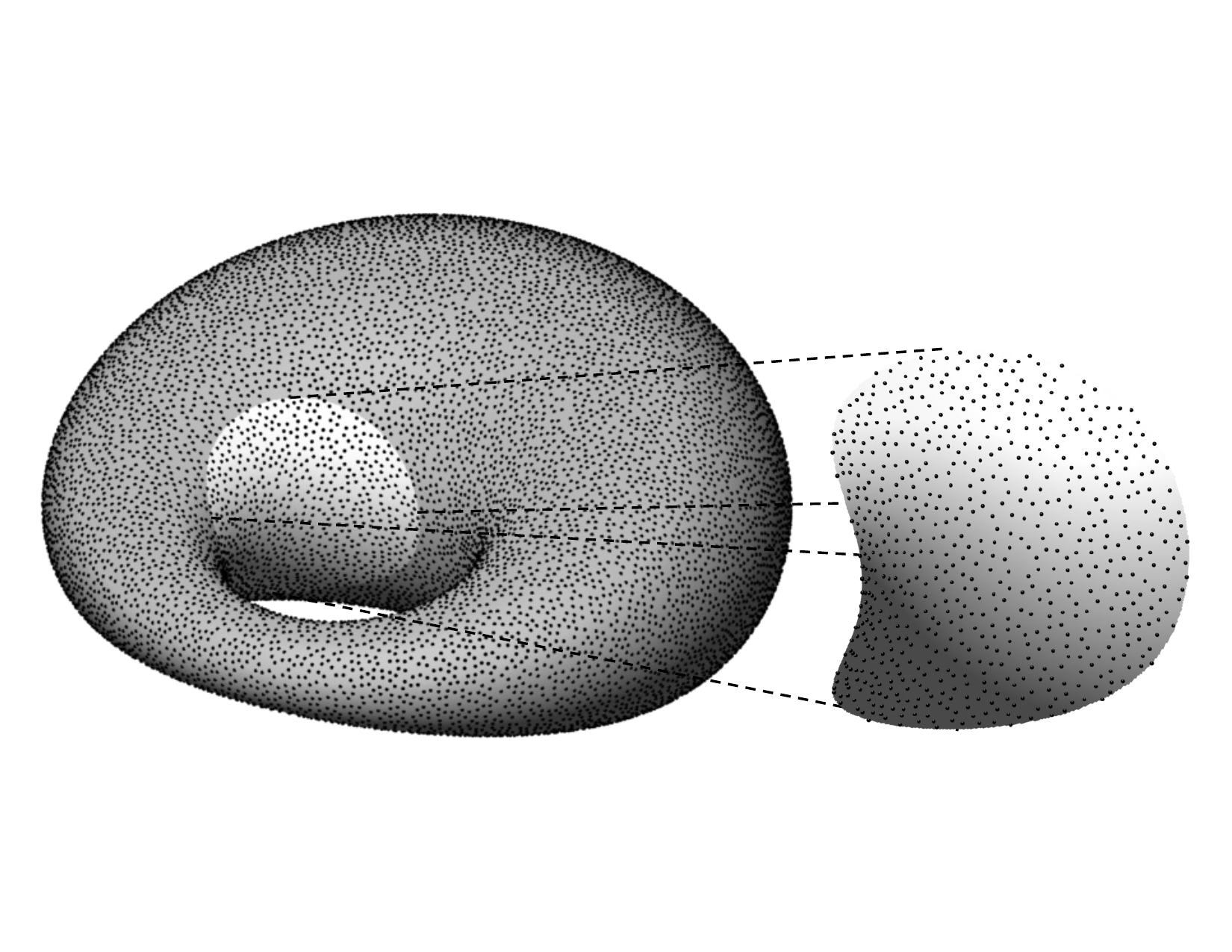} & \includegraphics[width=0.3\textwidth]{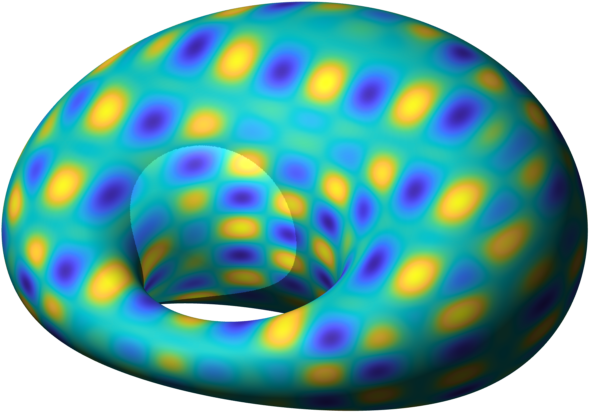}\\ 
\end{tabular}
\caption{Left panel: visualization of the manifold $\M$ and the compact subset $\subdom\subset\M$ (inset) used in the MLS numerical experiments; see \eqref{eq:cyclide} and \eqref{eq:patch} for exact definitions.  Black solid spheres mark the node sets $\Xi$ on $\M$ and $\subXi$ on $\subdom$. Right panel: heat map of the target function on $\M$ considered in the numerical experiments; the brighter region highlights the target function on $\subdom$.\label{fig:cyclide}}
\end{figure}

\subsection{Verification of error and stability estimates}\label{SS:cyclide}
\begin{figure}
\centering
\begin{tabular}{cc}
\includegraphics[width=0.47\textwidth]{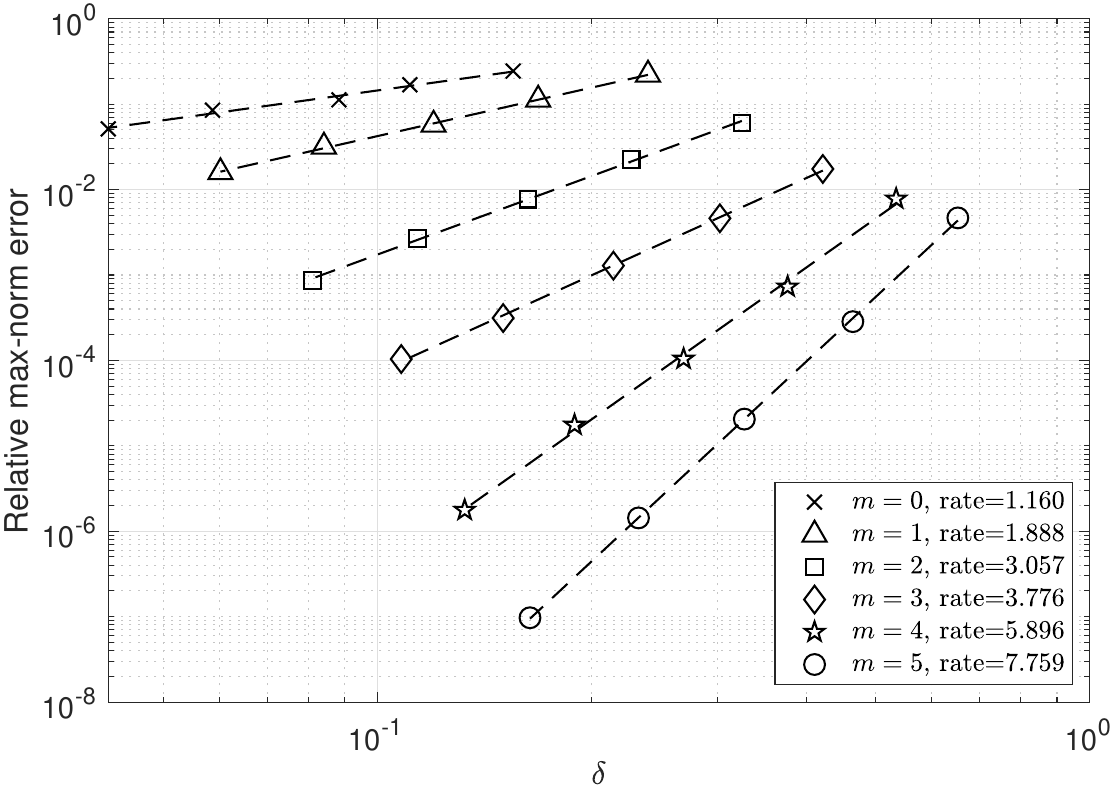} & 
\includegraphics[width=0.47\textwidth]{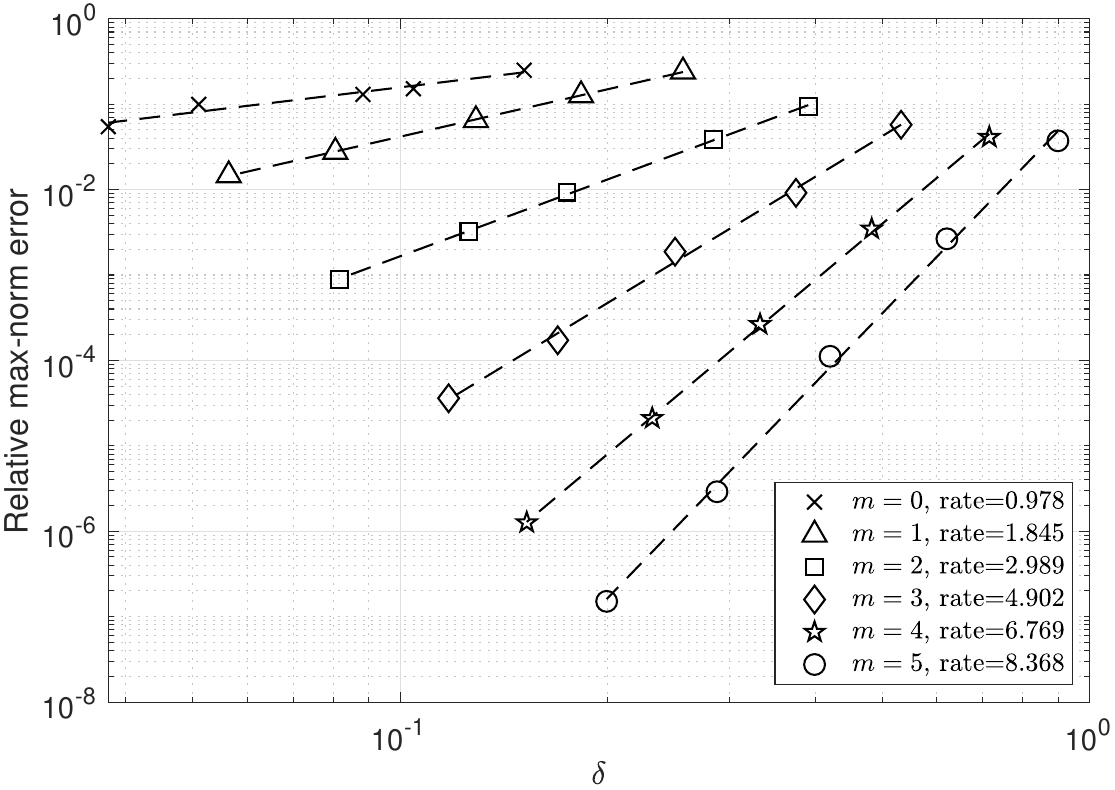} \\
(a)  & (b) 
\end{tabular}
\caption{Convergence results where the approximation problem is done over (a) all of $\M$ and (b) only the compact subset $\subdom$.  The estimated rates of convergence for each $m$ are included in legend labels.\label{fig:convergence}}
\end{figure}
Consider the following two-dimensional, degree four algebraic manifold:
\begin{align}
\M = \left\{x = (x_1,x_2,x_3) \in \R^3 \;\left| \lp x_1^2 + x_2^2 + x_3^2 - d^2 + b^2 \rp^2 - 4\lp a x_1 + c d \rp^2 - 4 b^2 x_2^2 = 0 \right. \right\},
\label{eq:cyclide}
\end{align}
where $a=2$, $b=1.9$, $d=1$, and $c^2 = a^2 - b^2$; this manifold is an example of a \textit{cyclide of Dupin}. We test the stability and error estimates of MLS approximations over the whole manifold, i.e., $\Omega=\M$, and over the compact subset of $\M$ defined as
\begin{align}
\subdom = \M \cap B_1(\xi_{\rm c}),
\label{eq:patch}
\end{align}
where $B_1(\xi_{\rm c})$ is the ball in $\R^3$ of radius 1 centered at the point $\xi_{\rm c} = (0,\sqrt{9-6a+b^2},0)$ on the cyclide; see the left panel of Figure \ref{fig:cyclide} for an illustration of $\M$ and $\subdom$.  For the target function, we use the following $C^{\infty}$ function defined in $\R^3$ and restricted $\M$
\begin{align}
f(x) = \cos(\pi(x_1-\frac{3}{10}))\sin(2\pi(x_2-\frac15))\cos(3\pi(x_3-\frac{1}{10}));
\label{eq:target_function}
\end{align}
see the right panel of Figure \ref{fig:cyclide} for an illustration of $f$.  

To test the error estimates from Proposition \ref{prop:mls_error_estimate}, we consider MLS approximants constructed from samples of \eqref{eq:target_function} at unstructured, quasi-uniform point sets $\Xi\subset\M$ of cardinalities $N_{\Xi} = 2^{i}$, $i=14,\ldots,18$.  We also consider approximants of \eqref{eq:target_function} from restrictions of these point sets to $\subdom$, i.e., $\subXi:=\Xi \cap \subdom$, which results in point sets of cardinalities $N_{\subXi}=706,\,1399,\,2838,\,5655,\,11308$. See the left panel of Figure \ref{fig:cyclide} for an illustration of these points with $N_{\Xi}=2^{14}$ and $N_{\subXi}=706$ nodes.  The error in MLS approximants for $\Omega$ are computed at a finer set of $N_{X}=2^{21}$ evaluation points $X \subset \M$, while approximants on $\subdom$ are computed at $N_{\subX}=86123$ evaluation points  $\subX=X\cap \subdom$.

The convergence results using the set-up described above are given in Figure \ref{fig:convergence} for polynomial degrees $m=0,1,\ldots,5$. Included in these results are the estimated rates of convergence computed from a line of best fit of the (log) of the data.  The convergence rates are estimated in terms of $\delta$, which should be proportional to the mesh-norm of $\Xi$ since these point sets are quasi-uniform.  We see for the MLS approximants over all of $\M$ that the estimated convergence rates are close to the optimal rates of $m+1$ expected from Proposition \ref{prop:mls_error_estimate} in the case of $m=0,1,2,3$, but that for $m=4$ and $5$ the rates are higher by about 1.  Similar results hold for MLS approximants over $\subdom$, but in this case we observe higher rates also for $m=3$.
These numerical results back up the new theoretical results that one can use MLS entirely locally and still obtain similar convergence rates to working globally on a manifold.

We note that the cyclide surface is defined by the level surface of an irreducible polynomial of degree $k=4$.  Thus, by (\ref{Hypersurface_dimension}) above, the dimension of $\mathcal{P}_m(\M)$ is given by: $$\dim(\mathcal{P}_m(\M)) =  \begin{cases} 2m^2+2&m\ge 4\\
\frac16 (m+3)(m+2)(m+1)&m < 4.\end{cases} .$$
In the numerical experiments from Figure \ref{fig:convergence} (as well as many others not presented here), we observed that the SVD procedure described in Section \ref{sec:mls_algorithm} consistently produced a discrete orthogonal basis for each $\mathcal{P}_m(\Xi_x(\delta)))$ that  matched the expected dimension from  the formula above. This numerical evidence supports the robustness and generality of this relatively simple and straightforward method of working with polynomials in the ambient space.

\begin{figure}
\centering
\includegraphics[width=0.5\textwidth]
{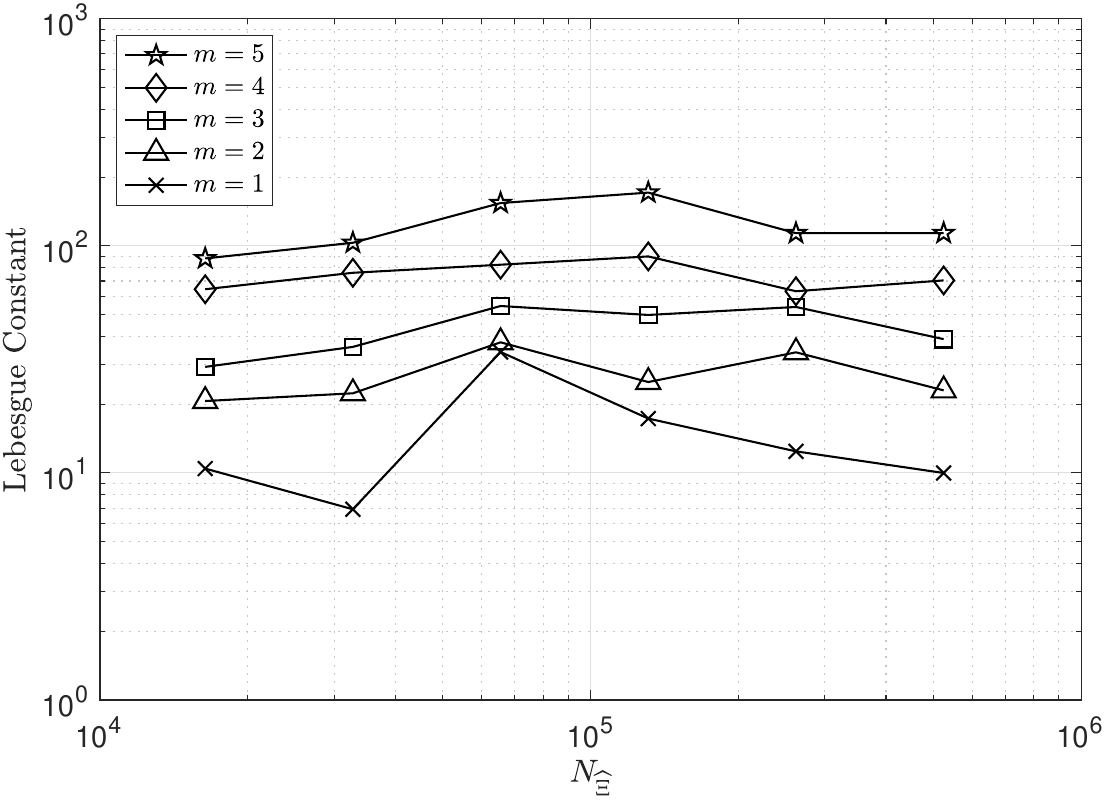} 
\caption{Numerically computed Lebesgue constants for the MLS approximation problem for the compact subset $\subdom$ of cyclide using quasi-uniform points of increasing cardinality $N_{\subXi}$.\label{fig:lebesgueconst}}
\end{figure}

\begin{figure}
\centering
\includegraphics[width=0.95\textwidth]{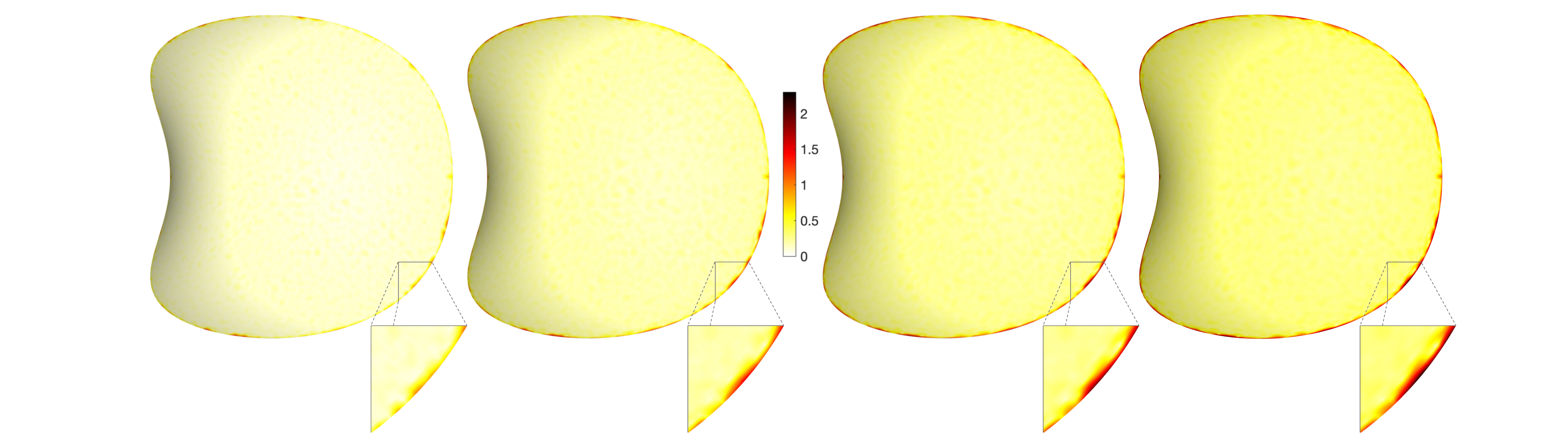}
\caption{Lebesgue functions of the MLS approximants on $\subdom$ using a point set with $N_{\subXi}=5655$ and (from left to right) polynomial degrees $m=2,3,4,5$.  Note that the color scheme is same for each plot according to the given colorbar and is based on the $\log_{10}$ of the values of the Lebesgue functions. \label{fig:lebesguefunc}}
\end{figure}

\begin{table}[tb]
\centering
\begin{tabular}{|c||c|c|c|c|}
\hline 
& \multicolumn{2}{c|}{Perturbation $\sigma=10^{-2}$} & \multicolumn{2}{c|}{Perturbation $\sigma=10^{-1}$} \\
\hline
Deg.\ & Mean diff.\ & Std Dev & Mean diff.\ & Std Dev\\
\hline
\hline
\multicolumn{5}{|c|}{Approximation on $\Omega$} \\
\hline
$0$ & $4.49\times 10^{-2}$ & $2.94\times 10^{-3}$ & $4.53\times 10^{-1}$ & $2.85\times 10^{-2}$ \\
$1$ & $8.59\times 10^{-2}$ & $2.68\times 10^{-2}$ & $8.77\times 10^{-1}$ & $2.70\times 10^{-1}$ \\
$2$ & $5.05\times 10^{-2}$ & $4.25\times 10^{-3}$ & $5.07\times 10^{-1}$ & $5.84\times 10^{-2}$ \\
$3$ & $4.50\times 10^{-2}$ & $3.26\times 10^{-3}$ & $4.46\times 10^{-1}$ & $3.86\times 10^{-2}$ \\
$4$ & $4.04\times 10^{-2}$ & $2.91\times 10^{-3}$ & $4.04\times 10^{-1}$ & $2.78\times 10^{-2}$ \\
$5$ & $3.67\times 10^{-2}$ & $2.36\times 10^{-3}$ & $3.61\times 10^{-1}$ & $2.16\times 10^{-2}$ \\  
\hline
\multicolumn{5}{|c|}{Approximation on $\subdom$} \\
\hline
$0$ & $3.89\times 10^{-2}$ & $2.78\times 10^{-3}$ & $3.88\times 10^{-1}$ & $3.23\times 10^{-2}$ \\
$1$ & $3.14\times 10^{-2}$ & $3.18\times 10^{-3}$ & $3.07\times 10^{-1}$ & $3.11\times 10^{-2}$ \\
$2$ & $3.01\times 10^{-2}$ & $2.21\times 10^{-3}$ & $2.95\times 10^{-1}$ & $2.56\times 10^{-2}$ \\
$3$ & $2.69\times 10^{-2}$ & $2.31\times 10^{-3}$ & $2.67\times 10^{-1}$ & $2.12\times 10^{-2}$ \\
$4$ & $2.60\times 10^{-2}$ & $2.57\times 10^{-3}$ & $2.61\times 10^{-1}$ & $2.72\times 10^{-2}$ \\
$5$ & $2.53\times 10^{-2}$ & $2.58\times 10^{-3}$ & $2.59\times 10^{-1}$ & $3.13\times 10^{-2}$ \\
\hline
\end{tabular}  
\caption{Stability of the MLS approximation under noise introduced to the target function \eqref{eq:target_function}.  Noise was added to the samples of the target function as $\tilde{f}_j = f_j + \epsilon_j$, where $\epsilon_j \sim \mathcal{N}(0,\sigma)$.  MLS  approximants of $\tilde{f}_j$ were computed using 100 different trials and results show the mean and standard deviation of max-norm differences between the MLS approximants and the exact function.  Computations for $\Omega$ were done using $\subXi$ with $N_{\Xi}=262144$ and the errors were measured at $2097152$ points over $\Omega$, while computations for $\subdom$ were done using $\subXi\subset\subdom$ with $N_{\subXi}=11308$ and the errors were measured at $86123$ points over $\subdom$\label{tbl:stability}}
\end{table}

We conclude with some experiments on the stability of the MLS approximants.  These are illustrated as follows:
\begin{enumerate}
    \item Table \ref{tbl:stability} displays results of experiments where the data is perturbed by different levels of noise. As expected from the discussion in Section \ref{SS_noisy}, we see that the errors in the MLS approximants are closely controlled by the noise level.
    \item Figure \ref{fig:lebesgueconst} displays the numerically computed Lebesgue constants for the MLS approximants on $\subdom$.  We see that the Lebesgue constants appear to remain bounded as expected from the results given in Section \ref{sec:shape_functions}.
    \item Figure \ref{fig:lebesguefunc} gives visualizations of the the Lebesgue functions on $\subdom$ for different polynomial degrees. These images show that the Lebesgue functions are largest along the boundary of the domain and increase with increasing $m$, which is also the typical behavior in planar domains.
\end{enumerate}

\subsection{Meshed surface}
\begin{figure}
\centering
\begin{tabular}{cc}
\includegraphics[width=0.3\textwidth]{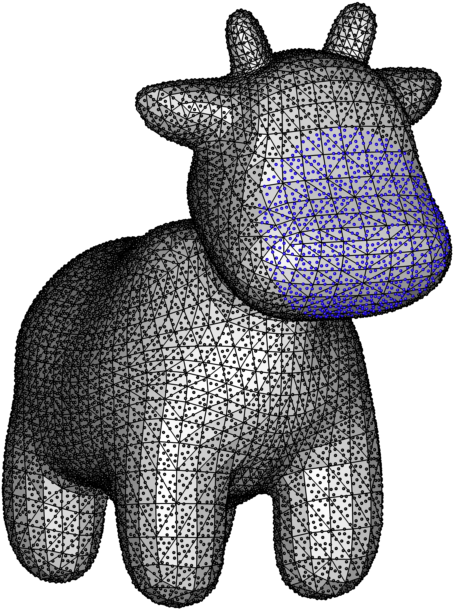} & \includegraphics[width=0.33\textwidth]{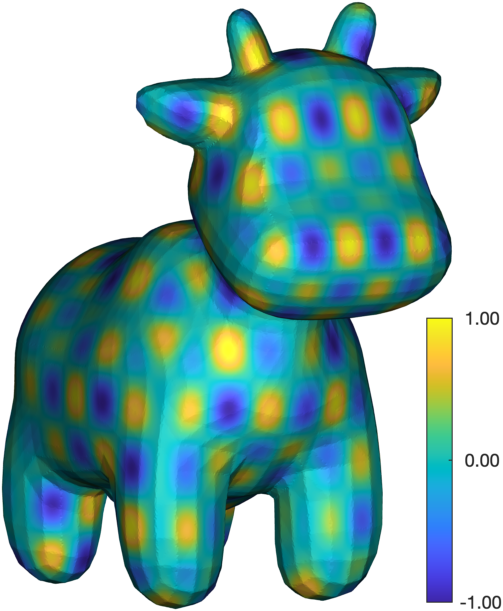}\\
(a) & (b) \\
\includegraphics[width=0.33\textwidth]{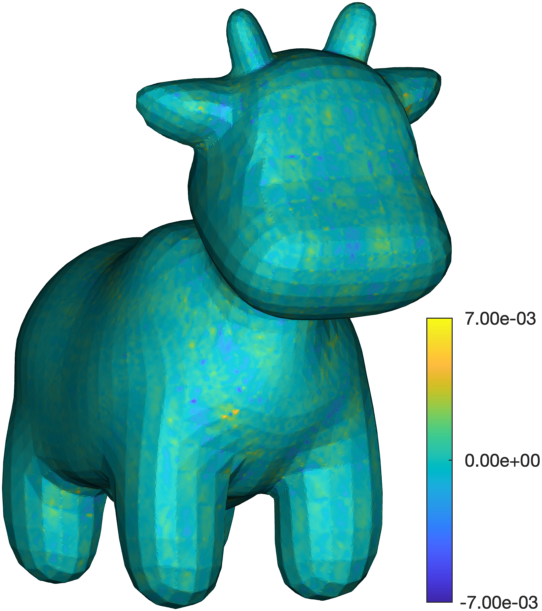} & \includegraphics[width=0.33\textwidth]{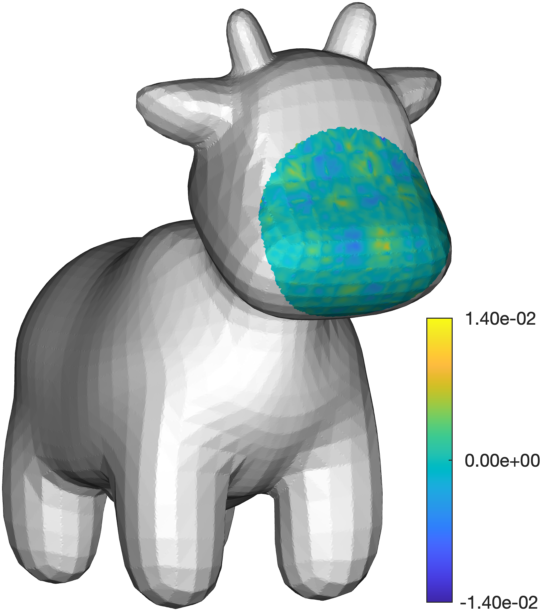} \\
(c) & (d)
\end{tabular}
\caption{(a) Visualization of the piecewise algebraic Spot manifold $\M$ and the compact subset $\subdom\subset\M$ (highlighted by the blue points) in the MLS numerical experiments. (b) Heat map of the target function on $\M$ considered in the numerical experiments. (c) \& (d) Heat map of the errors in the $m=3$ MLS approximants of the target function when using all points on $\M$ a restrictions of the points to $\subdom$.\label{fig:spot}}
\end{figure}
We lastly explore the behavior of the MLS method on a manifold that is not covered by our theory. For this we use the piecewise algebraic ``Spot'' manifold $\M$ from~\cite{crane2013robust}, which is defined by a triangulated mesh consisting of 2930 vertices. We employ the same algorithmic approach as the previous section. That is, we use global polynomials restricted to the surface, and again  
consider both the whole manifold, i.e., $\Omega=\M$ and a compact subset of $\M$ defined as
\begin{align}
\subdom = \M \cap B_{\frac{3}{10}}(\xi_{\rm c}),
\label{eq:patchspot}
\end{align}
where $B_{\frac{3}{10}}(\xi_{\rm c})$ is the ball in $\R^3$ of radius $\frac{3}{10}$ centered on the nose of Spot.  See Figure \ref{fig:spot} (a) for a visualization of Spot and the subdomain $\subdom$.

We consider MLS approximations of the target function
\begin{align}
f(x) = \cos(6\pi x_1)\sin(9\pi(x_2-\frac{1}{10}))\cos(7\pi x_3)
\end{align}
on $\M$ and $\subdom$, which is displayed in Figure \ref{fig:spot} (b).  We use samples of $f$ on a  quasi-uniform node set $\Xi\subset\M$ with $N_{\Xi}=32768$ and the restriction of this set $\subXi\subset\subdom$ with $N_{\subXi}=1671$. The evaluation points $X$ and $\subX$ are chosen to densely sample the approximants on the respective domains.

Figure (c) \& (d) show the errors in the MLS approximation using $m=3$ degree polynomials for $\M$ and $\subdom$, respectively.  We see that the approximations in both settings provide good reconstructions of the target. The dimension of the polynomials restricted to the surface were similarly determined by an SVD. However, we found that the point sets were too coarse compared to the resolution of the Spot mesh to reveal the 2-dimensional surface structure and that the dimension of the polynomial space matched the ambient 3-dimensional space in all cases. This suggests an immense flexibility of the proposed MLS method even for non-algebraic surfaces. Instead of breaking down, it essentially works as a standard 3-dimensional MLS approximation based on a point cloud. 

While not presented here, we also observed convergence similar to the previous section for the MLS approximants as the density of the node sets $\Xi$ and $\subXi$ was increased.  While we do not yet have  theoretical guarantees concerning the accuracy and stability of the method in the ambient space, these numerical results are nevertheless promising.  They suggest that the proposed method is robust even in a pre-asymptotic regime and can be employed in a variety of applications. 

A contribution of this paper is to show both theoretically and practically convergence of the MLS in the asymptotic limit of many centers. In this limit, the structure of the manifold will be captured also by the restriction of ambient space polynomials.

\subsection*{Acknowledgements}
The authors wish to thank Daniel Erman and Elizabeth Gross
for helpful comments regarding polynomial ideals, the Hilbert function, and computing  Gr{\"o}bner bases.
They would also like to thank Norm Levenberg for interesting perspectives on the extremal $\mathscr{L}$-function
and feedback on the proof of Lemma \ref{L_Markov}.
Grady Wright was supported by grant DMS-2309712 from the US National Science Foundation.
Thomas Hangelbroek was supported by by grant DMS-2010051 from the US National Science Foundation.

\bibliographystyle{plain}
\bibliography{literature}

\appendix
\section{Higher order Markov inequalities on real algebraic manifolds}\label{S_Markov}
 Any polynomial 
 in $\R[x_1,\dots, x_N]= \mathcal{P}(\R^N)$ has
 a unique analytic extension to the complex ring $\C[z_1,\dots, z_N]=\mathcal{P}(\C^N)$, 
 making the complex variety 
  $$X_{\C}=\{z\in \C^N\mid(\forall j\le N-d)\ \ P_j(z)=0\}$$
  an extension
 of the real
 algebraic variety $X_{\R}$.
 From now on, we denote by $\M_{\R}$ the real algebraic manifold $\M$ (the original object under consideration),
 and by $\M_{\C}$, its complex counterpart.  For  every $x\in \M_{\R}$ 
 there is a neighborhood   in $X_{\C}$ which has no 
 singular points, so $\M_{\R}\subset \M_{\C}$. 

\smallskip
\paragraph{\em Notation} To make clear which distance function is being used, we write $|\cdot|_{\C^n}$ or $|\cdot|_{\R^n}$ for the Euclidean
 norm and $\dist_{\M_{\C}}$ and $\dist_{\M_{\R}}$ for the Riemannian distances on $\M_{\C}$ and $\M_{\R}$, respectively.
 Similarly, to denote balls, we use  $B(x,t;\, \C^n)$, $B(x,t;\, \R^n)$ for Euclidean balls and 
 $B(x,t;\, \M_{\C})$ and $B(x,t;\,\M_{\R})$ for Riemannian balls. We denote  the $\sup$ norm  over a set $A$ by
$\|p\|_{A}:=\sup_{z\in A} |p(z)|$.
 
\smallskip

For this setup,  the maps $\Exp_x: B(0,r_{\Omega})\to B(x,r_{\Omega})$,
with $x\in\Omega$
 are analytic. 
 This follows essentially from \cite[Prop 10.5]{He}
although a direct proof  using analytic dependence on initial conditions is straightforward.
Moreover, each $\Exp_x$  has a complex extension
$ \Exp^{\C}_{x}: B(0,r_x;\, \C^d)\to \M_{\C}\subset \C^N$ with $0<r_x\le r_{\Omega}$.
    This is the exponential map on the complex manifold $\M_{\C}$. 
For  a compact subset $\Omega\subset \M_{\R}$, there is a constant  
 $0<\tilde{r}_{\Omega}\le r_{\Omega}$ so that for any $x\in\Omega$
 and any $r\le \tilde{r}_{\Omega}$,
    $$\Exp^{\C}_{x}: B(0,r;\, \C^d)\to \M_{\C}\subset \C^N$$
    is well-defined, injective, holomorphic, and 
    $\Exp^{\C}_{x}\bigl(B(0,r;\, \C^d)\bigr)\cap \R^N= \Exp^{\C}_{x}( B(0,r;\ \R^d)) $.
%
%

      Because $\Exp^{\C}_{x}$ is a diffeomorphism, we have equivalence between metrics
    on $\C^d$ and $\M_{\C}$; i.e., there are
    constants $0<\widetilde{\Gamma}_1\le \widetilde{\Gamma}_2$ so that 
    $$\widetilde{\Gamma}_1 |\zeta - \xi|_{\C^d} \le \dist_{\M_{\C}}(\Exp^{\C}_x(\zeta),\Exp^{\C}_x(\xi))
    \le \widetilde{\Gamma}_2 |\zeta-\xi|_{\C^d}.$$
    
    \begin{lemma}
\label{L_first_covariant_Markov}
Let $\M_{\R}$ be a an algebraic manifold and
$\Omega\subset \M_{\R}$ a compact subset.
For any $k\in\N$
there is a constant $C_1$, so that for any
$x\in\Omega$, 
any $y\in \Omega\cap B(x,\tilde{r}_{\Omega}/2;\, \R^N)$
with $y = \Exp_{x}(\nu)$ ,
and any polynomial $p\in \mathcal{P}(\M)$,  
$$\|\nabla^k p(y)\|_{T_k^{0}\M_{y}}  
\le
{C_1}{R^{-k}}
\|p\|_{\M_{\C}\cap B(y,R;\, {\C^N})}
$$
holds for all $0<R<\tilde{r}_{\Omega}/2$.
\end{lemma}
We remark that the following proof (and hence the lemma) holds if we replace polynomials  by analytic functions.
Similarly, the algebraic manifold can be replaced by an analytic manifold. This demonstrates that the importance of working on an algebraic manifold is not to obtain this estimate.
\begin{proof} 
Fix $R<\tilde{r}_{\Omega}/2$ 
and without loss, assume $R\le 1$. 
Set  $\tilde{R} = 
{R}/\bigl({\Gamma_2(1+\sqrt{d})}\bigr)$,
and
write the polynomial $p$ as 
$p({\Exp_x}(z)) = P(z)$, where $P\in C^{\infty}(B(0,\tilde{R};\, \R^d))$
is analytic. 
Then (\ref{eq:expmap}) ensures that, for $y\in \Omega\cap B(x,\tilde{R};\, \M)$,
  $$
 \|\nabla^k p(y)\|_{T_k^{0}\M_{y}} 
 \le 
 \|p\|_{C^k(\Exp_x(B(\nu,\tilde{R}))} 
 \le
 C_2\|P\|_{C^k(B(\nu,\tilde{R};\, \R^d))}.
 $$
The complex extension 
$\tilde{P} =p\circ \Exp^{\C}_{x}:B(0,R;\, \C^d)\to \C$ 
is also analytic,
so
we have
$$
 \|\nabla^k p(y)\|_{T_k^{0}\M_{y}} 
 \le C_2 \max_{\alpha\le k}
 \| D^{\alpha}P \|_{B(\nu,\tilde{R},\, \R^d)}
= C_2 \max_{\alpha\le k}
\max_{\zeta\in  B(\nu,\tilde{R},\, \R^d) }
 | D^{\alpha}\tilde{P}(\zeta)|.
 $$
 Fix $\zeta\in B(\nu,\tilde{R},\, \R^d) =\R^d\cap B(\nu,\tilde{R};\, \C^d)$. 
 By the Cauchy integral formula, we can write
 $$
 D^{\alpha}\tilde{P}(\zeta)
 = \frac{\alpha!}{(2\pi i)^d} \int_{|\xi_1-z_1|=\tilde{R}}\dots \int_{|\xi_d-z_d|=\tilde{R}} 
 \frac{\tilde{P}(\xi_1,\dots\xi_d)}{(\xi_1-\zeta_1)^{\alpha_1+1}\dots (\xi_d-\zeta_d)^{\alpha_d+1}}\diff \xi_1\dots \diff \xi_d .
 $$
 This gives Cauchy's inequality  $|D^{\alpha} \tilde{P} (z) |\le \tilde{R}^{-|\alpha|}\max_{\xi\in \mathbb{D}(z,\tilde{R})}|P(\xi)|$,
 where we have employed the polydisk
 $\mathbb{D}(z,\tilde{R}):=\{z\in\C^d\mid (\forall j\le d)\, |z_j|\le \tilde{R}\}$.
Since the polydisk is contained in a ball, 
$\mathbb{D}(\zeta,\tilde{R})\subset B(\zeta,\tilde{R}\sqrt{d};\, \C^d)\subset B(\nu,\tilde{R}(1+\sqrt{d});\, \C^d)$,
we  have
 $$|D^{\alpha} \tilde{P} (\zeta) |
\le 
\alpha!
{\tilde{R}^{-|\alpha|}}
\max_{\xi\in B(\nu,\tilde{R}(1+\sqrt{d}) ;\, \C^d)} |\tilde{P}(\xi)|.
$$
The result follows
by 
replacing
 $\tilde{R}^{-|\alpha|}$ by ${R}^{-|\alpha|}({\Gamma_2(1+\sqrt{d})})^{|\alpha|}$ and subsequently 
enlarging the constant $C = C_2(\Gamma_2(1+\sqrt{d}))^{|\alpha|} $
to obtain
$$\|\nabla^k p(y)\|_{{T_k^{0}\M_{y}}} 
\le
{C}{R^{-|\alpha|}}
\max_{\xi \in \Exp_x(B(\nu, \tilde{R}(1+\sqrt{d});\, \C^d))} |p(\xi)|.$$
By assumptions on $\tilde{R}$ and $y$, we have
 $$\Exp^{\C}_x(B(\nu,\tilde{R}(1+\sqrt{d});\, \C^d))\subset 
 B(y, \Gamma_2 \tilde{R}(1+\sqrt{d});\,\M_{\C})
 \subset B(x, \tilde{r}_{\Omega};\, \M_{\C})$$
so $\|\nabla^k p(y)\|_{T_k^{0}\M_{y}}  
\le
{C}{R^{-k}} \max_{z \in B(y, R;\, \M_{\C})} |p(z)|$ holds. 
Finally,
by the metric comparison
$|v-y|\le \dist_{\M_{\C}} (y,v)$
we have $ B(y, R;\, M_{\C})\subset \M_{\C}\cap B(y,R;\, \C^N) $ and the lemma follows.
\end{proof}
 At this point, the issue of obtaining
the higher order Markov inequality is precisely the same as first order version given in
handled in  \cite[Main Theorem (1) $\to$ (2)]{bos:etal:1995},
which we follow.
 We include this proof
for the sake of completeness.

We wish to refine the sup norm  from Lemma \ref{L_first_covariant_Markov}, so that  it is taken
 over a real neighborhood  $K\subset \M_{\R}$ of $y$, rather
than the neighborhood  $\M_{\C}\cap B(y,r;\, \C^N) $.
For this, we use a Bernstein-Walsh inequality
\cite[(B-W)]{bos:etal:1995} (see also \cite[Ch. 5]{K})
of the form
$$|p(z)| \le  \|p\|_K \exp^{\deg(p) u_K(z)},$$
where $u_K$ is the $\mathscr{L}$-extremal function, which
for compact $K$ is equal to 
\begin{equation}
\label{extremal}
u_K(z) =\max\left(0, \sup\left\{\frac{1}{\deg{p}} \log\frac{|p(z)|}{\|p\|_K} \mid p\in\mathcal{P}(\C^N),\  \deg(p)>0\right\}\right).
\end{equation}
The extremal $\mathscr{L}$-function has a more complicated definition when $K$ is not compact,
although for any two sets $K\subset K'$ we have $u_{K'} \le u_K$, and for any set $K$, $u_K = u_{\hat{K}}$,
where $\hat{K}= \{z\in \C^N\mid \forall p\in \mathcal{P}(\C^N)\, |p(z)|\le \|p\|_K\}$. In particular, $u_{K'} = u_K$ if $K=\mathrm{cl}(K')$.

 Although definition (\ref{extremal})  makes the Bernstein-Walsh inequality a tautology, 
 finding bounds for $u_K$, or even showing that
$u_K$ is locally bounded  is a challenge. 
A major technical accomplishment of \cite{bos:etal:1995}
is that under certain circumstances, specifically, for certain compact sets $K\subset \M_{\R}$ and  neighborhoods
$U\subset \M_{\C}$, 
the quantities $\|u_K\|_{U}$
 are  bounded.

The main tool to bound the $\mathscr{L}$-function is \cite[Corollary 1.9]{bos:etal:1995}, 
summarized as follows. 
\begin{lemma}[Bos-Levenberg-Milman-Taylor]
For an
analytic map $$\sigma:B(0,1;\, \C^d)\to \M_{\C}$$ having full rank (i.e., an immersion), 
with $\sigma(B(0,1;\, \R^d))\subset \R^N$,
there are $\sigma$-dependent constants $\tilde{C}$ and $\epsilon_0<1/8$ so that 
 any point $x^0\in \sigma(B(0,1/4;\, \R^d))\cap \R^N$ and 
 for any $\epsilon<\epsilon_0$, the real neighborhood
$$K_{\epsilon}(x_0):= [\sigma (B(0,1/4;\, \C^d)\cap\R^N]\cap B(x_0,\epsilon;\, \R^N)$$
has $\mathscr{L}$-function which obeys
\begin{equation}
u_{K_{\epsilon}(x_0)} (z)\le
 \tilde{C} \frac{ 
  \bigl\| u_{\sigma(B(0,1/3;\, \C^d))}\bigr\|_{\sigma(B(0,1;\, \C^d) }
  }{\epsilon}
\dist_{\C^N}(z, K_{\epsilon}(x_0)).
\end{equation}
for $z\in  \sigma (B(0,1;\, \C^d))\cap B(x_0,\epsilon/2;\, \C^N)$.
\end{lemma}

\begin{note}
Note that $ \bigl\| u_{\sigma(B(0,1/4;\, \C^d))}\bigr\|_{\sigma(B(0,1;\, \C^d) )}$ is also a $\sigma$-dependent
constant. 
The fact that it is finite follows from the earlier work of Sadullaev \cite{Sadullaev}, which demonstrates a bound for 
the $\mathscr{L}$-extremal functions for complex neighborhoods. Specifically, \cite[Theorem 2.2]{Sadullaev}
shows that for some compact set $K\subset \M_{\C}$, $u_K$ is locally bounded on $\M_{\C}$, 
while \cite[Proposition 2.1]{Sadullaev}
shows that if it $u_K$ is locally bounded for one compact set 
$K\subset \M_{\C}$, then $u_{\tilde{K}}$ is locally bounded
for any compact set $\tilde{K}$ which is sufficiently large; 
namely, for any  compact set which is {\em non-pluripolar} in $A$.
The finiteness of $\bigl\| u_{\sigma(B(0,1/3;\, \C^d))}\bigr\|_{\sigma(B(0,1;\, \C^d) )}$  
follows from the fact that 
$\sigma(B(0,1/3;\, \C^d))$ contains a compact, non-pluripolar subset, as consequence, e.g.,
of \cite[Lemma 1.7]{bos:etal:1995}.
\end{note}
\begin{proof}[Proof of Lemma \ref{L_Markov}]
For every  $x\in\Omega$, rescale  $\Exp_x^{\C}$ 
as  $\sigma_x:B(0,1;\, \C^d)\to \M_{\C}$ by 
$\sigma_x(\zeta) =\Exp^{\C}_x(\tilde{r}_{\Omega}\zeta)$.
Then  \cite[Corollary 1.9]{bos:etal:1995}
guarantees
existence of constants $0<\epsilon_{x}<1/8$ and $C_x$
so that for each $\epsilon<\epsilon_{x}$ and  
each $y\in \sigma_x(B(0,1/4);\, \C^d)\cap\R^N$
$$u_{K_{\epsilon} (y)}(z)\le  \frac{C_x}{\epsilon} \dist(z,K_{\epsilon}(y)).$$
whenever  $z\in \sigma_x(B(0,1))\cap B(y,\epsilon/2;\, \C^N)$.

Now cover $\Omega$ by a finitely many neighborhoods 
$\sigma_{x_i}(B(0,1/4;\, \C^d))= B(x_i, \tilde{r}_{\Omega}/4;\ \M_{\C})$ using $x_i\in \Omega$.
 Let $\epsilon_0= \min \epsilon_{x_i}$ 
 and let $C_2 = \fix{\max\; } C_{x_i}$.\fixcom{Changed $\min$ to $\max$}
\fix{It follows that  for  any $y\in \Omega$, there is $x_i\in \Omega$
so that $y\in \sigma_{x_i}(B(0,1/4);\, \C^d)\cap\R^N$.
Furthermore for any  $r<\epsilon_0$,
 $K_{r}(y)=
  [\sigma_{x_i} (B(0,1/4;\, \C^d)\cap\R^N]\cap B(y,r;\, \R^N)
  = \M_{\R}\cap B(y,r;\, \R^N)$
 has the property that
 $u_{K_{r}(y)}\le  \frac{C_2}{r} \dist(z,\M_{\R}\cap B(y,r;\, \R^N)).$
 Applying the Bernstein-Walsh inequality gives
 $$
 |p(z)| \le \|p\|_{\M_{\R}\cap B(y,r;\, \R^N) }  \ e^{ \deg(p)\frac{C_2}{r} \dist(z,\M_{\R}\cap B(y,r;\, \R^N))}
 $$
 for any $z\in B(y,r/2;\C^N)$.}
 
  Set $\epsilon_* = \min(\epsilon_0, \tilde{r}_{\Omega}/2)$ and pick $r<\epsilon_*$.
 Note that if $p$ is constant, the inequality holds automatically, so we assume $\deg(p)\ge 1$.
Set $R:= \fix{r}/\max(2,\deg(p))$\fixcom{changed $\epsilon$ to $r$} and note that the conditions for Lemma \ref{L_first_covariant_Markov} are satisfied.
Thus for $y\in \Omega$ 
\begin{eqnarray*}
\|\nabla^k p(y)\|_{T_k^{0}\M_{y}}  
&\le&
{C_1}{R^{-k}} \|p\|_{\M_{\C}\cap B(y,R;\, {\C^N})}\\
&\le& {C_1}{R^{-k}} \|p\|_{\M_{\R}\cap B(y,r;\, \R^N) }  e^{ \deg(p)\frac{C_2}{r} R}\\
&\le& C_3{R^{-k}} \|p\|_{\M_{\R}\cap B(y,r;\, \R^N) }\ .
\end{eqnarray*}
In the second to last line we use the fact that $ \dist(z,\M_{\R}\cap B(y,r;\, \R^N)\le R$,
when $z\in B(y,R;\C^N)$. 
In the last line, we
set $C_3:= C_1 e^{C_2}$.
The assumption on $R$ guarantees that
$
\|\nabla^k p(y)\|_{T_k^{0}\M_{y}}   \le 2^k C_3 (\deg(p))^k{r^{-k}} \|p\|_{\M_{\R}\cap B(y,r;\, \R^N) }
$. By increasing the constant, we have
$
\|\nabla^k p(y)\|_{T_k^{0}\M_{y}}   \le C_4r^{-k} \|p\|_{ B(y,r;\M_{\R}) }
$.

Finally, by taking the supremum over $  B(y,r;\M_{\R}) $,
we have 
$$
\|p\|_{C^k(B(y,r;\M_{\R}))}   \le C_4  r^{-k}\|p\|_{ B(y,2r;\M_{\R})  }
\rightarrow 
\|p\|_{C^k(B(y,r;\M_{\R}))}   \le C_4 e^{C_* m} r^{-k} \|p\|_{ B(y,r;\M_{\R})  }
$$
by applying the doubling estimate (\ref{Fef_Doubling}) of Fefferman and Narasimhan.
 \end{proof}

\section{The dimension of \texorpdfstring{$\mathcal{P}_m(\M)$}{polynomials restricted to affine varieties}}\label{S_ideal}
The algebra of polynomials on $\M\subset\R^N$ can be expressed as a quotient 
$\mathcal{P}(\M) = \mathcal{P}(\R^N)/I(\M)$,
 with the help of the 
ideal 
$$I(\M):= \{p\in \mathcal{P}(\R^N)\mid p|_{\M} = 0\}.$$
The map $m\mapsto \dim\bigl( \mathcal{P}_m(\M)\bigr)$ is called the {\em Hilbert function} of $I(\M)$.
 In this case,
$\mathcal{P}_m(\M) =  \mathcal{P}_m(\R^N)/I_m(\M)$, where 
$I_m(\M): = I(\M)\cap \mathcal{P}_m(\R^N)$.
Thus 
$$\dim(\mathcal{P}_m(\M)) = \dim\bigl( \mathcal{P}_m(\R^N)/I_m(\M)\bigr)= \dim( \mathcal{P}_m(\R^N))- \dim (I_m(\M)).$$

\subsubsection*{Algebraic varieties of codimension 1}
This observation is enough to cleanly handle the case of algebraic  hypersurfaces
 $\M = \{x\in \R^N\mid P(x)=0\}$ 
  in $\R^N$
 \revision{ which satisfy the extra condition 
$$I(\M) = \{ q P\mid q\in \mathcal{P}(\R^N)\}.$$ 
In other words,
$I(\M)$ can be expressed as the ideal generated by $P$.}
\revisioncom{We have streamlined the condition on the variety.}
 In other words,
$I(\M)$ can be expressed as the ideal generated by $P$.
Consequently, $I_m(\M) = \{qP\mid \deg(qP) \le m\}$. 
Writing $\ell:=\deg(P)$, we then have, for $m\ge k$, 
that $\dim(I_m(\M)) = \dim(\mathcal{P}_{m-\ell}) (\R^N)$,
while for $m<k$,  $\dim(I_m(\M)) = 0$. Thus,
\begin{equation}
\label{Hypersurface_dimension}
\dim(\mathcal{P}_m(\M)) =
\begin{pmatrix}  N+m\\ N\end{pmatrix} - \begin{pmatrix} N+m-\deg(P)\\ N\end{pmatrix}
\end{equation}
\revision{with the convention that the binomial coefficient $\begin{pmatrix}a\\b\end{pmatrix}$ vanishes if $b>a$.}

\subsubsection*{Higher codimension} The situation for $k=N-d\ge 2$ (varieties of  codimension greater than one)
is a bit more complicated.
We give a roadmap which treats the scenario where $\M$
is the nullset of $k$ polynomials: $\M = \{x\in \R^N\mid  (\forall j\le k)\,P_j(x) =0\}$. The interested reader will find  precise details and proofs in \cite{Cox:etal:2015}.

We assume that the  ideal  $I(\M)=\{p\in \mathcal{P}(\R^N)\mid p|_{\M}=0\}$ is equal to
the ideal generated by the polynomials which have defined it:
\begin{equation}
\label{real_radical}
I(\M) =\langle P_1,\dots,P_k\rangle:= \Bigl\{\sum_{j=1}^k q_j P_j\mid q_j\in \mathcal{P}(\R^N)\Bigr\}.
\revisioncom{We point out that for a complex variety $\{x\in\C^N\mid P_j(x)=0\}$,
(\ref{real_radical}) follows when $\langle P_1,\dots,P_k\rangle$ is radical by Hilbert's
{\em Nullstellensatz}. 
 The situation is more complex for real algebraic varieties, although
there does exist a {\em real Nullstellensatz}
following work of Krivine in the 1960s. 
Roughly, 
condition (\ref{real_radical} holds if and only if $I(\M)$ is the {\em real radical} $\sqrt[\R]{I}$ of $I=\langle P_1,\dots,P_k\rangle$.
The real radical has a rather
technical definition -- we have elected not to include 
this digression
in the manuscript, but include it here for the reviewers.
}
\end{equation}

%

In this case,  $\dim(\mathcal{P}_m(\M)) =\dim(\mathcal{P}_m(\R^N))-
 \dim I_m(\M)$
  is obtained as before from 
  the Hilbert function of $I(\M)$.
  To facilitate this calculation,  a surrogate ideal whose Hilbert function is more easily computed is used.

\begin{enumerate}
\item
The {\em initial} ideal $\tilde{I}$ 
of $I(\M)$ is the ideal generated by the lead terms of $I(\M)$
under an admissible ordering on $\Z_+^N$. For this, the
lexicographic ordering suffices, although many other orderings
will do,  each may produce different lead terms and a different initial ideal -- see \cite[Chapter 2.2]{Cox:etal:2015}.
 However, regardless of this ordering,
 \cite[Ch.\ 9.3,\, Proposition 4]{Cox:etal:2015} guarantees that  the Hilbert function of $\tilde{I}$ is the same that of $I(\M)$.
 \item
The initial ideal
  is generated by finite sequence of monomials,  $\tilde{I} = \langle x^{\alpha_1},\dots ,x^{\alpha_M}\rangle$,
  which occur as the lead terms of a Gr{\"o}bner basis of $I(\M)$.
 There are  algorithms which produce a Gr{\"o}bner basis from an initial list of generators of an ideal;
 many have been implemented symbolically (see below for an example).
 \item
 The quantity
$\dim\tilde{I}_m$ can be calculated easily because $\tilde{I}$ is monomial ideal.
Indeed,  $\tilde{I} =\mathrm{span}\{x^{\alpha}\mid \alpha\in \mathcal{I}:= \bigcup_{j=1}^n( \alpha_j +\Z_+^N)\}$,
so the Hilbert function is
$$\dim I_m(\M)=\dim\tilde{I}_m=\revision{\mathrm{card}}\Bigl\{\alpha\in\bigcup_{j=1}^n( \alpha_j +\Z_+^N)\mid |\alpha|\le m\Bigr\}.$$
\end{enumerate}
\subsubsection*{A basis for $\mathcal{P}_m(\M)$}
An interesting fact is that the initial ideal yields a basis for the space $\mathcal{P}_m(\M)$. 
Specifically,  the result
\cite[Theorem 2.6]{Gr}, 
shows that the {\em standard basis} for $\mathcal{P}(\M)$ consists of 
all monomials $x^{\alpha}\notin \tilde{I}$, and so 
$$\{x^{\alpha}\notin \tilde{I}\mid |\alpha|\le m\}$$
is a basis for $\mathcal{P}_m(\M)$. Another formulation of this result is
\cite[Theorem 1.6.12]{Hibi2013}.
\paragraph{\em Example} 
Consider the 1-dimensional manifold $SO(2)\subset \R^4$, where 
$$\begin{pmatrix} a&b\\c& d\end{pmatrix}\in SO(2)$$
The ideal associated with this variety is 
$$I=\langle a^2 +c^2-1,b^2+d^2-1, ab +cd ,ad-bc-1\rangle.$$
A Gr{\"o}bner basis for this ideal is $\{b+c, a-d, c^2+d^2-1\}$.
The initial ideal is generated by the lead terms: $a,b$ and $c^2$, which corresponds to 
multi-indices 
$$\alpha_1 =(1,0,0,0), \quad \alpha_2=(0,1,0,0)\quad \text{and} \quad \alpha_3 =(0,0,2,0).$$
One may count, by hand that $\dim(I_0)=0$, $\dim(I_1) =\mathrm{card}\{\alpha_1,\alpha_2\}=2$, and even
 $\dim(I_2) =  \mathrm{card}\bigl(\{ \alpha_3\} \cup \{\alpha_1+\gamma\mid  |\gamma|\le 1\}\cup\{\alpha_2+ \gamma\mid |\gamma|\le 1 \}\bigr) =10$.
In general, the fact that  $\dim(I_m) =2 \begin{pmatrix}m+3\\4\end{pmatrix} 
 - 2\begin{pmatrix}m+1\\4\end{pmatrix}
 +\begin{pmatrix}m\\4\end{pmatrix}  $ follows  by inclusion/exclusion. Since $\mathcal{P}_m(\R^4) = \begin{pmatrix}m+4\\4\end{pmatrix}$,
 we have
 $$\mathcal{P}_m(SO(2))=  \begin{pmatrix}m+4\\4\end{pmatrix}-2\begin{pmatrix}m+3\\4\end{pmatrix} 
 +2\begin{pmatrix}m+1\\4\end{pmatrix}
 -\begin{pmatrix}m\\4\end{pmatrix} =2m+1.$$
 Furthermore, we see that  $x^\alpha$ with $\alpha_1=0$, $\alpha_2=0$, $\alpha_3<2$  are
 the monomials not in $\tilde{I}$. Thus
 $\{c d^j:j=0,1,\dots\}\cup \{d^j\mid j=0,1,\dots\}$ is 
 a basis for $\mathcal{P}(\M)$; using the standard parametrization $a,d= \cos\theta$, $c=-b = \sin\theta$ for $SO(2)$, this
 corresponds to the trigonometric polynomial basis 
  $\{\sin (\cdot) \cos^j (\cdot)\mid j\ge 0\}\cup\{\cos^j (\cdot)\mid j\ge 0\}$.
\subsubsection*{Symbolic computing} There are a variety of computer algebra systems that
perform symbolic computing with polynomial ideals, like
computing Gr{\"o}bner bases.
An introduction to several of these platforms is presented in \cite[Appendix C]{Cox:etal:2015}. 
For example,
we can use {\tt Macaulay2} to analyze $SO(2)$:
\begin{itemize}
\item {\tt R = QQ[a,b,c,d]} generates the polynomial ring (along with the monomial order),
\item {\tt I = ideal$(a^2 +c^2-1,b^2+d^2-1, a*b +c*d ,a*d-b*c-1)$},  generates the ideal,
\item {\tt gens gb I} generates a Gr{\"o}bner basis,
\item {\tt leadTerm(I)} 
gives the monomial generators for $\tilde{I}$.
\end{itemize}
An issue which may arise when treating the kinds of algebraic varieties considered in this article is that
some of  these platforms  work with polynomial rings over discrete fields, like the rationals. Thus some
caution may be needed, even when working with varieties defined by polynomials with integer coefficients.
\end{document}